\def \Ochanine{MR895567}
\def \Weibel{MR1269324}
\def \GreenleesMay{MR1230773}
\def \KatzMazur{MR772569}
\def \Katz{MR0447119}
\def \HartshorneAG{MR0463157}
\def \GHMR{MR2183282}
\def \DeligneRapoport{MR0337993}
\def \BrownComenetz{MR0405403}
\def \HopkinsSinger{MR2192936}
\def \HoveyStrickland{?}
\def \Silverman{MR817210}
\def \Bauer{MR2508200}
\def \Rezk512{R1}
\def \Behrens{MR2193339}
\def \LewisMaySteinberger{MR866482}
\def \MahowaldRezk{MR1719751}
\def \HoveyStrickland{MR1601906}
\def \FauskHuMay{MR1988072}
\def \Deligne{MR0498551}
\def \HKM{HKM}
\def \JardineSpectra{MR1765868}
\def \JardineSimplicial{MR906403}
\def \DHI{MR2034012}
\def \RavenelLocalization{MR737778}
\def \Champs{MR1771927}
\def \ConstructTMF{ConstructTMF}
\def \GH{GH}
\def \GoerssHopkins{MR2125040}
\def \Cooke{MR0461544}
\def \Hollander{MR2391126}
\def\Topos{MR2522659}
\def\Algebra{HigherAlgebra}
\def\DAG{DAG}
\def\HAGI{MR2137288}
\def\HAGII{MR2394633}
\def \M{\mathcal{M}}
\def \Mc2{\tilde{M}(2)}
\def \Msing{\mathcal{M}^{\infty}}
\def \Z3{\mathbb{Z}_{(3)}}
\def \Z{\mathbb{Z}}
\def \Q{\mathbb{Q}}
\def \F{\mathbb{F}}
\def \P {\mathbb{P}}
\def \A {\mathbb{A}}
\def \G {\mathbb{G}}
\def \tH {\hat{H}}
\def\IZ{I_{\Z}}
\def\IQ{I_{\Q}}
\def\IQZ{I_{\Q/\Z}}
\def\IR{I_{R}}
\def\Gl{GL_2(\Z/n)}
\DeclareMathOperator{\Proj}{Proj}
\DeclareMathOperator{\Spec}{Spec}
\DeclareMathOperator{\Spf}{Spf}
\DeclareMathOperator{\Hom}{Hom}
\DeclareMathOperator{\Ext}{Ext}
\DeclareMathOperator{\Id}{Id}
\DeclareMathOperator{\sgn}{sgn}
\DeclareMathOperator{\Perm}{Perm}
\DeclareMathOperator{\Ker}{Ker}
\DeclareMathOperator{\Coker}{Coker}
\DeclareMathOperator{\Tot}{Tot}
\DeclareMathOperator{\holim}{holim}
\DeclareMathOperator{\Rings}{(E_\infty-Rings)}
\DeclareMathOperator{\Tel}{Tel}
\newcommand{\SSh}[1]{\mathcal{O}_{#1}}
\newcommand{\Sh}[1]{\mathcal{#1}}
\newcommand{\om}[2]{\omega_{#1}^{{#2}}}
\newcommand{\Om}[1]{\Omega_{#1}}
\newcommand{\Ml}[1]{\mathcal{M}{(#1)}}
\newcommand{\Mlsing}[1]{\mathcal{M}{(#1)}^{\infty}}
\newcommand{\cross}[1]{\underset{#1}{\times}}
\newcommand{\tensor}[1]{\underset{#1}{\otimes}}
\newcommand{\smsh}[1]{\underset{#1}{\wedge}}
\newcommand{\Cat}[1]{\mathcal{#1}}
\newtheorem{thm}{Theorem}[section]
\newtheorem{cor}[thm]{Corollary}
\newtheorem{lemma}[thm]{Lemma}
\newtheorem{proposition}[thm]{Proposition}
\newtheorem*{thmM}{Theorem \ref{MainThm}}
\theoremstyle{remark}
\newtheorem{rem}[thm]{Remark}
\theoremstyle{remark}
\theoremstyle{remark}
\theoremstyle{definition}
\newtheorem{definition}[thm]{Definition}
\theoremstyle{remark}
\begin{document}
\title{Duality for Topological Modular Forms}
\author{Vesna Stojanoska}

\address{
Vesna Stojanoska,
Massachusetts Institute of Technology,
Cambridge MA 02139}

\email{vstojanoska@math.mit.edu}

\begin{abstract}
It has been observed that certain localizations of the spectrum of topological modular forms are self-dual (Mahowald-Rezk, Gross-Hopkins). We provide an integral explanation of these results that is internal to the geometry of the (compactified) moduli stack of elliptic curves $ \M $, yet is only true in the derived setting. When $ 2 $ is inverted, a choice of level $ 2 $ structure for an elliptic curve provides a geometrically well-behaved cover of $ \M $, which allows one to consider $ Tmf $ as the homotopy fixed points of $ Tmf(2) $, topological modular forms with level $ 2 $ structure, under a natural action by $ GL_2(\Z/2) $. As a result of Grothendieck-Serre duality, we obtain that $ Tmf(2) $ is self-dual. The vanishing of the associated Tate spectrum then makes $ Tmf $ itself Anderson self-dual.
\end{abstract}

\maketitle

\section{Introduction}

There are several notions of duality in homotopy theory, and a family of such originates with Brown and Comenetz. In \cite{\BrownComenetz}, they introduced the spectrum $ \IQZ $ which represents the cohomology theory that assigns to a spectrum $ X $ the Pontryagin duals $ \Hom_{\Z}(\pi_{-*}X,\Q/\Z) $ of its homotopy groups. Brown-Comenetz duality is difficult to tackle directly, so Mahowald and Rezk \cite{\MahowaldRezk} studied a tower approximating it, such that at each stage the self-dual objects turn out to have interesting chromatic properties. In particular, they show that self-duality is detected on cohomology as a module over the Steenrod algebra. Consequently, a version of the spectrum $ tmf $ of topological modular forms possesses self-dual properties.

The question thus arises whether this self-duality of $ tmf $ is already inherent in its geometric construction. Indeed, the advent of derived algebraic geometry not only allows for the construction of an object such as $ tmf $, but also for bringing in geometric notions of duality to homotopy theory, most notably Grothendieck-Serre duality. However, it is rarely possible to identify the abstractly constructed (derived) dualizing sheaves with a concrete and computable object. This stands in contrast to ordinary algebraic geometry, where a few smallness assumptions guarantee that the sheaf of differentials is a dualizing sheaf (eg. \cite[III.7]{\HartshorneAG}).

Nevertheless, the case of the moduli stack of elliptic curves\footnote{The moduli stack of elliptic curves is sometimes also denoted $ \M_{ell} $ or $ \M_{1,1} $; we chose the notation from \cite{\DeligneRapoport}.} $ \M^0 $ and topological modular forms allows for a hands-on approach to Serre duality in a derived setting. Even if the underlying ordinary stack does not admit a traditional Serre duality pairing, the derived version of $ \M^0 $ is considerably better behaved. The purpose of this paper is to show how the input from homotopy theory brings duality back into the picture. Conversely, it provides an integral interpretation of the aforementioned self-duality for $ tmf $ that is inherent in the geometry of the moduli stack of elliptic curves.

The duality that naturally occurs from the geometric viewpoint is not quite that of Brown and Comenetz, but an integral version thereof, called Anderson duality and denoted $ \IZ $ \cite{A1,\HopkinsSinger}. After localization with respect to Morava $ K $-theory $ K(n) $ for $ n>0 $, however, Anderson duality and Brown-Comenetz duality only differ by a shift.

Elliptic curves have come into homotopy theory because they give rise to interesting one-parameter formal groups of heights one or two. The homotopical version of these is the notion of an elliptic spectrum: an even periodic spectrum $ E $, together with an elliptic curve $ C $ over $ \pi_0 E $, and an isomorphism between the formal group of $ E $ and the completion of $ C $ at the identity section. \'Etale maps $ \Spec \pi_0 E\to \M^0 $ give rise to such spectra; more strongly, as a consequence of the Goerss-Hopkins-Miller theorem, the assignment of an elliptic spectrum to an \'etale map to $ \M^0 $ gives an \'etale sheaf of $ E_\infty $-ring spectra on the moduli stack of elliptic curves. Better still, the compactification of $ \M^0 $, which we will hereby denote by $ \M $, admits such a sheaf, denoted $ \Sh{O}^{top} $, whose underlying ordinary stack is the usual stack of generalized elliptic curves \cite{\ConstructTMF}. The derived global sections of $ \Sh{O}^{top} $ are called $ Tmf $, the spectrum of topological modular forms. This is the non-connective, non-periodic version of $ Tmf $.\footnote{We chose this notation to distinguish this version of topological modular forms from the connective $ tmf $ and the periodic $ TMF $.}

The main result proved in this paper is the following theorem:

\begin{thmM}
The Anderson dual of $ Tmf[1/2] $ is $ \Sigma^{21}Tmf[1/2] $.
\end{thmM}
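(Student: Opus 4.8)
The plan is to descend the problem from the compactified moduli stack $\M$ to its level-$2$ cover, where the geometry becomes essentially that of a curve. After inverting $2$, a full level-$2$ structure rigidifies the $2$-torsion, so the coarse space of $\Ml{2}$ is the $\lambda$-line $\P^1$ (carrying the Legendre family $y^2 = x(x-1)(x-\lambda)$), and $Tmf(2)$ denotes the derived global sections of $\Sh{O}^{top}$ over this stack. The group $G = GL_2(\Z/2)\cong \Sigma_3$ acts by permuting the three nonzero $2$-torsion points, hence the three cusps $\lambda\in\{0,1,\infty\}$, and the homotopy fixed points recover $Tmf[1/2]\simeq Tmf(2)^{hG}$. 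First I would set up this identification carefully, checking that the descent (homotopy fixed point) spectral sequence behaves well once $2$ is inverted.

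Next I would produce the self-duality of $Tmf(2)$ from Grothendieck--Serre duality in the derived setting. Because $\Ml{2}$ is a proper, one-dimensional derived Deligne--Mumford stack whose structure sheaf $\Sh{O}^{top}$ is well-behaved, I expect an equivalence identifying $\IZ Tmf(2)$ with the derived sections of the Serre-dual sheaf. The crux is to identify the topological dualizing sheaf concretely. Using the Kodaira--Spencer isomorphism $\omega^{\otimes 2}\cong \Omega^1(\log(\text{cusps}))$ together with the contribution of the three cusps---where $\Sh{O}^{top}$ is governed by the Tate curve---I would argue that the relevant dualizing sheaf differs from the naive $\Omega^1$ precisely by the cusp divisor, and amounts (up to sign conventions) to a tenth tensor power of $\omega$. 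Since $\omega^{\otimes k}$ contributes in topological degree $2k$, this weight-$10$ dualizing line, together with the single cohomological shift coming from the one-dimensionality of $\Ml{2}$, produces the total shift $2\cdot 10 + 1 = 21$; the reflection $n\mapsto -n$ built into $\IZ$ (via the sequences relating $\pi_{-n}\IZ Y$ to $\Hom(\pi_n Y,\Z)$ and $\Ext(\pi_{n-1}Y,\Z)$) then yields a $G$-equivariant equivalence $\IZ Tmf(2)\simeq \Sigma^{21}Tmf(2)$. I would take care that this equivalence is equivariant, i.e. that the residual $\Sigma_3$-twist on the dualizing line, coming from the permutation of the cusps, does not obstruct the descent.

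Finally I would descend the duality along $G$. The functor $\IZ = F(-,\IZ)$ carries homotopy orbits to homotopy fixed points, so $\IZ(Tmf(2)_{hG})\simeq(\IZ Tmf(2))^{hG}$. To convert the fixed points defining $Tmf[1/2]$ into orbits I need the norm map $Tmf(2)_{hG}\to Tmf(2)^{hG}$ to be an equivalence, i.e. the Tate spectrum $Tmf(2)^{tG}$ to vanish. Since $2$ is inverted, only the $3$-Sylow $C_3\subset \Sigma_3$ matters, so it suffices to show $Tmf(2)^{tC_3}\simeq 0$ after inverting $2$. Granting this, the chain $\IZ Tmf[1/2]\simeq \IZ(Tmf(2)_{hG})\simeq (\IZ Tmf(2))^{hG}\simeq \Sigma^{21}(Tmf(2)^{hG})\simeq \Sigma^{21}Tmf[1/2]$ completes the argument.

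I expect the main obstacle to be the vanishing of the Tate spectrum $Tmf(2)^{tC_3}$ at the prime $3$. The $C_3$-action on $\P^1_\lambda$ is not free: it fixes the two equianharmonic points with $j=0$, whose elliptic curves carry the extra automorphisms responsible for the interesting $3$-torsion in $tmf$. Thus the vanishing is a genuine computation rather than a formality; I would attack it through the Tate spectral sequence $\tH^*(C_3;\pi_* Tmf(2))\Rightarrow \pi_* Tmf(2)^{tC_3}$, aiming to show that the $C_3$-action on the homotopy (a ring of level-$2$ modular forms over $\Z[1/2]$) is cohomologically trivial so that every Tate cohomology group dies. The secondary difficulty, conceptually the heart of the matter, is the concrete identification of the dualizing sheaf in the second step: the classical stack has a degenerate Serre pairing, whereas the derived $\Ml{2}$ admits a perfect duality, and pinning down the cusp correction is exactly the passage from an abstract dualizing complex to a computable sheaf that the introduction flags as only rarely possible.
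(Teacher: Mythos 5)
Your architecture matches the paper's: pass to the level-$2$ cover, prove a Serre-duality-based self-duality for $Tmf(2)$, show the Tate spectrum vanishes so that fixed points can be traded for orbits, and descend. But the two computational pivots are both wrong as stated. First, the dualizing sheaf of $\Ml{2}\cong\Proj\Z[1/2][\lambda_1,\lambda_2]$ is $\Om{\Ml{2}}\cong\om{\Ml{2}}{-4}$, not a tenth power of $\omega$ (your weight $10$ is the dualizing weight of $\M$ itself, where $\Omega^1\cong\omega^{2}(-\mathrm{cusp})\cong\omega^{-10}$ --- a useful heuristic for the answer $21=2\cdot 10+1$, but not the stack you are working on). Consequently $\IZ Tmf(2)\simeq\Sigma^{9}Tmf(2)$, \emph{not} $\Sigma^{21}Tmf(2)$: one checks directly that $\pi_*Tmf(2)=\Lambda\oplus\Sigma^{-9}\Lambda^\vee_{\sgn}$ is supported in degrees $\{0,4,8,\dots\}\cup\{-9,-13,\dots\}$, which is symmetric under $n\mapsto -9-n$ but not under $n\mapsto -21-n$, so no equivariant (or even nonequivariant) equivalence $\IZ Tmf(2)\simeq\Sigma^{21}Tmf(2)$ can exist. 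The missing $12$ is produced only after taking $S_3$-cohomology: the equivalence $\IZ Tmf(2)\simeq\Sigma^{9}Tmf(2)$ twists the $S_3$-action by the sign representation of $GL_2(\Z/2)\cong S_3$ (the action on $H^1(\Ml{2},\Om{\Ml{2}})$ is by the determinant), and the $\sgn$-twisted invariants of $\Lambda$ are a free module on the class $d=\sqrt{\Delta}$ of topological degree $12$, whence $H^*(S_3,\pi_*\Sigma^9 Tmf(2)_{\sgn})\cong H^*(S_3,\pi_*\Sigma^{21}Tmf(2))$. So the sign twist you propose to ``rule out as an obstruction'' is in fact the essential ingredient; without it you would descend to the wrong answer $\Sigma^{9}Tmf$.

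Second, your proposed mechanism for the vanishing of $Tmf(2)^{tC_3}$ --- that the $C_3$-action on $\pi_*Tmf(2)$ is cohomologically trivial --- fails: the Tate cohomology is $\tH^*(S_3,\pi_*Tmf(2))\cong\Z/3[\alpha,\beta^{\pm1},\Delta^{\pm1}]/(\alpha^2)$, which is very far from zero (it contains all the $3$-torsion of $\pi_*tmf$). The Tate spectrum is contractible only because of differentials in the Tate spectral sequence, $d_5(\Delta)=\alpha\beta^2$ and $d_9(\alpha\Delta^2)=\beta^5$, imported from the (classically known) homotopy fixed point spectral sequence for $TMF$; these leave an empty $E_{10}$-page. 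Finally, even after correcting both points, you only obtain an abstract isomorphism of $E_2$-pages between the fixed-point spectral sequence computing $\pi_*\IZ Tmf$ and the one computing $\pi_*\Sigma^{21}Tmf$; an additional argument (a commuting square of spectral sequences in which the Anderson-duality and dual-group-cohomology spectral sequences collapse, so that the fixed-point spectral sequence for $\IZ Tmf$ is identified with the $\Z$-dual of the homotopy orbit spectral sequence) is needed to pin down the differentials before one can produce the equivalence by choosing a dualizing class and extending by the $Tmf$-module structure.
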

The proof is geometric in the sense that it uses Serre duality on a cover of $ \M $ as well as descent, manifested in the vanishing of a certain Tate spectrum.

\subsection*{Acknowledgments}

This paper is part of my doctoral dissertation at Northwestern University, supervised by Paul Goerss. His constant support and guidance throughout my doctoral studies and beyond have been immensely helpful at every stage of this work. I am indebted to Mark Behrens, Mark Mahowald, and Charles Rezk for very helpful conversations, suggestions, and inspiration. I am also grateful to the referee who has made extraordinarily thorough suggestions to improve an earlier version of this paper, and to Mark Behrens and Paul Goerss again for their patience and help with the material in Section \ref{sec:dermoduli}.

Many thanks to Tilman Bauer for creating the sseq latex package, which I used to draw the spectral sequences in Figures \ref{Fig:tmf2} through \ref{fig:htpyorbit}.

\section{Dualities}\label{sec:dualities}

We begin by recalling the definitions and properties of Brown-Comenetz and Anderson duality.

In \cite{\BrownComenetz}, Brown and Comenetz studied the functor on spectra
\[ X\mapsto I_{\Q/\Z}^*(X)=\Hom_{\Z}(\pi_{-*}X,\Q/\Z). \]
Because $ \Q/\Z $ is an injective $ \Z $-module, this defines a cohomology theory, and is therefore represented by some spectrum; denote it $ \IQZ $. We shall abuse notation, and write  $ \IQZ $ also for the functor
\[ X\mapsto F(X,\IQZ), \]
which we think of as ``dualizing" with respect to $ \IQZ $. And indeed, if $ X $ is a spectrum whose homotopy groups are finite, then the natural ``double-duality" map $ X\to \IQZ \IQZ X $ is an equivalence. 

In a similar fashion, one can define $ \IQ $\footnote{In fact, $ \IQ $ is the Eilenberg-MacLane spectrum $ H\Q $.} to be the spectrum corepresenting the functor \[ X\mapsto \Hom_{\Z}(\pi_{-*}X,\Q). \] The quotient map $ \Q\to\Q/\Z $ gives rise to a natural map $ \IQ\to\IQZ $; in accordance with the algebraic situation, we denote by $ \IZ $ the fiber of the latter map. As I have learned from Mark Behrens, the functor corepresented by $ \IZ $ has been introduced by Anderson in \cite{A1}, and further employed by Hopkins and Singer in \cite{\HopkinsSinger}.

For $ R $ any of $ \Z $, $ \Q $, or $ \Q/\Z $, denote also by $ I_R $ the functor on spectra $ X\mapsto F(X,I_R) $. When $ R $ is an injective $ \Z $-module, we have that $ \pi_*I_RX=\Hom_{\Z}(\pi_{-*}X,R) $.

Now for any spectrum $ X $, we have a fiber sequence
\begin{align}\label{IZresolution}
\IZ X\to\IQ X\xrightarrow{\varphi} \IQZ X,
\end{align}
giving a long exact sequence of homotopy groups
\[
\xymatrix@C=-5pt{
\cdots && \pi_{t+1}\IQZ X \ar[rr] \ar[rd] && \pi_t \IZ X \ar[rr]\ar[rd] && \pi_t\IQ X &&\cdots\\
&&& \Coker \pi_{t+1}\varphi \ar[ur] && \Ker \pi_t \varphi\ar[ur]\\
}\]
But the kernel and cokernel of $ \pi_*\varphi $ compute the derived $ \Z $-duals of the homotopy groups of $ X $, so we obtain short exact sequences \[ 0\to \Ext^1_{\Z}(\pi_{-t-1}X,\Z)\to \pi_t\IZ X \to \Hom_{\Z}(\pi_{-t}X,\Z)\to 0.\] We can think of these as organized in a two-line spectral sequence:
\begin{align}\label{ss:AndersonDual}
\Ext^s_{\Z}(\pi_{t}X,\Z)\Rightarrow \pi_{-t-s}\IZ X.
\end{align}

\begin{rem}\label{rem:Anderson}
In this project we will use Anderson duality in the localization of the category of spectra where we invert some integer $ n $ (we will mostly be interested in the case $ n=2 $). The correct way to proceed is to replace $ \Z $ by $ \Z[1/n] $ everywhere. In particular, the homotopy groups of the Anderson dual of $ X $ will be computed by $ \Ext^s_{\Z[1/n]}(\pi_{t}X,\Z[1/n]) $.
\end{rem}

\subsection{Relation to $ K(n) $-local and Mahowald-Rezk Duality}

This section is a digression meant to suggest the relevance of Anderson duality by relating it to $ K(n) $-local Brown-Comenetz dualty as well as Mahowald-Rezk duality.

We recall the definition of the Brown-Comenetz duality functor in the $ K(n) $-local category from \cite[Ch. 10]{\HoveyStrickland}. Fix a prime $ p $, and for any $ n\geq 0$, let $ K(n) $ denote the Morava $ K $-theory spectrum, and let $ L_n $ denote the Bousfield localization functor with respect to the Johnson-Wilson spectrum $ E(n) $. There is a natural transformation $ L_n\to L_{n-1} $, whose homotopy fiber is called the monochromatic layer functor $ M_n $, and a homotopy pull-back square
\begin{align}\label{fracturesquare}
\xymatrix{ L_n \ar[r]\ar[d] & L_{K(n)}\ar[d] \\
L_{n-1} \ar[r] & L_{n-1}L_{K(n)},
}
\end{align}
implying that $ M_n $ is also the fiber of $ L_{K(n)} \to L_{n-1}L_{K(n)}$.

The $ K(n) $-local Brown-Comenetz dual of $ X $ is defined to be $ I_nX=F(M_nX,\IQZ) $, the Brown-Comenetz dual of the $ n $-th monochromatic layer of $ X $. By \eqref{fracturesquare}, $ I_nX $ only depends on the $ K(n) $-localization of $ X $ (since $ M_nX $ only depends on $ L_{K(n)}X $), and by the first part of the proof of Proposition \ref{prop:local} below, $ I_nX $ is $ K(n) $-local. Therefore we can view $ I_n $ as an endofunctor of the $ K(n) $-local category.

Note that after localization with respect to the Morava $ K $-theories $ K(n) $ for $ n\geq 1 $, $ \IQ X $ becomes contractible since it is rational. The fiber sequence \eqref{IZresolution} then gives that $ L_{K(n)}\IQZ X= L_{K(n)}\Sigma\IZ X $. By Proposition \ref{prop:local} below, we have that $ I_nX $ is the $ K(n) $-localization of the Brown-Comenetz dual of $ L_nX $. In particular, if $ X $ is already $ E(n) $-local, then
\begin{align}\label{AndersonGrossHopkins}
I_nX= L_{K(n)}\Sigma\IZ X.
\end{align}

In order to define the Mahowald-Rezk duality functor \cite{\MahowaldRezk} we need some preliminary definitions. Let $ T_i $ be a sequence of finite spectra of type $ i $ and let $ \Tel(i) $ be the mapping telescope of some $ v_i $ self-map of $ T_i $. The finite localization $ L_n^f $ of $ X $ is the Bousfield localization with respect to the wedge $ \Tel(0)\vee\cdots \vee \Tel(n) $, and  $ C_n^f $ is the fiber of the localization map $ X\to L_n^fX $. A spectrum $ X $ satisfies the $ E(n) $-telescope conjecture \cite[10.5]{\RavenelLocalization} if and only if the natural map $ L_n^fX\to L_n X $ is an equivalence (eg. \cite{\MahowaldRezk}).

Let $ X $ be a spectrum whose $ \F_p $-cohomology is finitely presented over the Steenrod algebra; the Mahowald-Rezk dual $ W_n $ is defined to be the Brown-Comenetz dual $ \IQZ C_n^f $. 

Suppose now that $X$ is an $ E(n) $-local spectrum which satisfies the $ E(n) $ and $ E(n-1) $-telescope conjectures. This condition is satisfied by the spectra of topological modular forms \cite[2.4.7]{\Behrens} with which we are concerned in this work. Then the monochromatic layer $ M_nX $ is the fiber of the natural map $ L_n^fX\to L_{n-1}^fX $. Taking the Brown-Comenetz dual of the first column in the diagram of fiber sequences
\[
\xymatrix{
\Sigma^{-1}M_nX \ar[r]\ar[d] &\star \ar[r]\ar[d] & M_n X\ar[d]\\
C_n^fX\ar[r]\ar[d] & X \ar[r]\ar@{=}[d] & L_n^f X\ar[d]\\
C_{n-1}^fX\ar[r] & X\ar[r] &L^f_{n-1}X
}
\]
implies the fiber sequence \cite[2.4.4]{\Behrens}
\begin{align*}
W_{n-1}X\to W_n X\to L_{K(n)}\IZ X,
\end{align*}
relating Mahowald-Rezk duality to $ K(n) $-local Anderson duality.

We have used the following result.
\begin{proposition}\label{prop:local}
For any $ X $ and $ Y $, the natural map $ F(L_nX,Y)\to F(M_nX,Y) $ is $ K(n) $-localization.
\end{proposition}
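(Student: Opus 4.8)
The plan is to apply the contravariant functor $F(-,Y)$ to the fiber sequence $M_nX\to L_nX\to L_{n-1}X$ that defines the monochromatic layer, producing a fiber sequence
\[ F(L_{n-1}X,Y)\to F(L_nX,Y)\to F(M_nX,Y). \]
Since a map of spectra is a $K(n)$-localization precisely when its target is $K(n)$-local and the map itself is a $K(n)$-equivalence (equivalently, when its fiber is $K(n)$-acyclic), it suffices to establish two things: that $F(M_nX,Y)$ is $K(n)$-local, and that the fiber $F(L_{n-1}X,Y)$ is $K(n)$-acyclic. I would record at the outset the two facts that drive everything: $L_n$ and $L_{n-1}$ are smashing, so that $M_n$ is smashing ($M_nZ\simeq M_nS\wedge Z$), and, by the fracture square \eqref{fracturesquare}, $M_nZ$ depends only on $L_{K(n)}Z$.

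For the first claim I would use the standard criterion that a spectrum $W$ is $K(n)$-local exactly when $F(A,W)\simeq\ast$ for every $K(n)$-acyclic $A$. Taking $W=F(M_nX,Y)$ and using the adjunction $F(A,F(M_nX,Y))\simeq F(A\wedge M_nX,Y)$, the claim reduces to showing $A\wedge M_nX\simeq\ast$ whenever $A$ is $K(n)$-acyclic. Here the smashing property lets me rewrite $A\wedge M_nX\simeq M_nA\wedge X$, and since $L_{K(n)}A\simeq\ast$ forces $M_nA\simeq\ast$ (the fracture square expresses $M_nA$ entirely in terms of $L_{K(n)}A$), the product vanishes. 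This is the part I expect to be the conceptual heart of the argument: it is where the input of the smashing theorem is genuinely used, and where one must be careful to distinguish the ``mapping-in'' condition defining $K(n)$-locality from the ``smashing'' condition $\langle M_nX\rangle\le\langle K(n)\rangle$ that actually gets verified.

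For the second claim I would first show that $F(L_{n-1}X,Y)$ is $E(n-1)$-local, again via the mapping-in criterion: for $C$ an $E(n-1)$-acyclic spectrum, $F(C,F(L_{n-1}X,Y))\simeq F(C\wedge L_{n-1}X,Y)$, and $C\wedge L_{n-1}X\simeq L_{n-1}C\wedge X\simeq\ast$ by smashing. It then remains to observe that every $E(n-1)$-local spectrum $W$ is $K(n)$-acyclic: smashing gives $W\simeq L_{n-1}S\wedge W$, and $K(n)\wedge L_{n-1}S\simeq L_{n-1}K(n)\simeq\ast$ because $K(n)$ is $E(n-1)$-acyclic. Hence the fiber is $K(n)$-acyclic, the map $F(L_nX,Y)\to F(M_nX,Y)$ is a $K(n)$-equivalence onto a $K(n)$-local target, and is therefore the $K(n)$-localization. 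The only genuinely delicate input throughout is the repeated appeal to smashing; the remaining manipulations are formal consequences of the standard characterizations of locality and acyclicity.
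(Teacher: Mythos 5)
Your proposal is correct, and the first half --- showing $F(M_nX,Y)$ is $K(n)$-local via the adjunction $F(Z,F(M_nX,Y))\simeq F(Z\wedge M_nX,Y)$, the smashing property of $M_n$, and the fracture square --- is essentially identical to the paper's argument. Where you diverge is in proving that the fiber $F(L_{n-1}X,Y)$ is $K(n)$-acyclic. The paper smashes with a generalized Moore spectrum $T_n$ of type $n$: using Spanier--Whitehead self-duality of $T_n$ it computes $F(L_{n-1}X,Y)\wedge T_n\simeq F\bigl((L_{n-1}T_n)\wedge X,Y\bigr)\simeq\ast$ since $T_n$ is $E(n-1)$-acyclic, and then concludes $K(n)$-acyclicity because $K(n)_*T_n\neq 0$ detects it. You instead show directly that $F(L_{n-1}X,Y)$ is $E(n-1)$-local (by the same mapping-in/adjunction/smashing pattern as in the first half) and then invoke the fact that every $E(n-1)$-local spectrum is $K(n)$-acyclic, via $W\simeq L_{n-1}S\wedge W$ and $L_{n-1}K(n)\simeq\ast$. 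Both arguments lean on the smashing theorem for $L_{n-1}$; yours avoids the existence of a self-dual generalized Moore spectrum of type $n$ (a nontrivial construction from Section 4 of Hovey--Strickland) and is arguably more uniform, since it reuses the same formal criterion twice, while the paper's version isolates the acyclicity statement in a single finite-complex computation. Either route is complete and correct.
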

\begin{proof}
First of all, we need to show that $ F(M_nX,Y) $ is $ K(n) $-local. This is equivalent to the condition that for any $ K(n) $-acyclic spectrum $ Z $, the function spectrum $ F(Z,F(M_nX,Y))=F(Z\wedge M_nX, Y) $ is contractible. But the functor $ M_n $ is smashing, and is also the fiber of $ L_{K(n)}\to L_{ n-1}L_{K(n)}$, by the homotopy pull-back square \eqref{fracturesquare}.
Therefore, for $ K(n) $-acyclic $ Z $, $ M_nZ $ is contractible, and the claim follows.

It remains to show that the fiber $ F(L_{n-1}X,Y) $ is $ K(n) $-acyclic. To do this, smash with a generalized Moore spectrum $ T_n $. This is a finite, Spanier-Whitehead self-dual (up to a suspension shift) spectrum of type $ n $, which is therefore $ E(n-1) $-acyclic. A construction of such a spectrum can be found in Section 4 of \cite{\HoveyStrickland}. We have (up to a suitable suspension)
\begin{align*} F(L_{n-1}X,Y)\wedge T_n&=F(L_{n-1}X,Y)\wedge DT_n=F\bigl(T_n,F(L_{n-1}X,Y)\bigr)\\
&=F\bigl((L_{n-1}T_n)\wedge X,Y\bigr)=*,
\end{align*}
implying that $ F(L_{n-1}X,Y) $ is $ K(n) $-acyclic, which proves the proposition.
\end{proof}

\section{Derived Stacks}
In this section we briefly recall the notion of derived stack which
will be useful to us; a good general reference for the classical theory of stacks is \cite{\Champs}.
Roughly speaking, stacks arise as solutions to moduli problems by
allowing points to have nontrivial automorphisms. The classical
viewpoint is that a stack is a category fibered in groupoids. One can
also equivalently view a stack as a presheaf of groupoids
\cite{\Hollander}.

Deligne-Mumford stacks are particularly well-behaved, because they can be studied by maps from schemes in the following sense. If $ \Sh{X} $ is a Deligne-Mumford stack, then $ \Sh{X} $ has an \'etale cover by a scheme, and any map from a scheme $ S $ to $ \Sh{X} $ is representable. (A map of stacks $ f:\Sh{X}\to \Sh{Y} $ is said to be representable if for any scheme $ S $ over $ \Sh{Y} $, the ($2$-categorical) pullback $ \Sh{X}\cross{\Sh{Y}}S $ is again a scheme.) 

Derived stacks are obtained by allowing sheaves valued in a category
which has homotopy theory, for example differential graded algebras,
simplicial rings, or commutative ring spectra. To be able to make
sense of the latter, one needs a nice model for the category of
spectra and its smash product, with a good notion of commutative rings
and modules over those. Much has been written recently about derived
algebraic geometry, work of Lurie on the one hand
\cite{\Algebra,\Topos,\DAG} and To\"en-Vezossi on the other
\cite{\HAGI,\HAGII}. For this project we only consider sheaves of $
E_\infty $-rings on ordinary sites, and consequently we avoid the need to
work with infinity categories.  Derived Deligne-Mumford stacks will be
defined as follows.
\begin{definition}
A \emph{derived Deligne-Mumford stack} is a pair $
(\Sh{X},\SSh{\Sh{X}}^{top}) $ consisting of a topological space (or a Grothendieck topos) $
\Sh{X} $ and a sheaf $ \SSh{\Sh{X}}^{top} $ of $ E_\infty $-ring
spectra on its small \'etale site such that
\begin{enumerate}
\item[(a)] the pair $ (\Sh{X},\pi_0\SSh{\Sh{X}}^{top}) $ is an
ordinary Deligne-Mumford stack, and
\item[(b)] for every $ k $, $ \pi_k\SSh{\Sh{X}}^{top} $ is a
quasi-coherent $ \pi_0\SSh{\Sh{X}}^{top} $-module.
\end{enumerate}
\end{definition}

Here and elsewhere in this paper, by $ \pi_*\Sh{F} $ of a sheaf of $ E_\infty $-rings we will mean the sheafification of the presheaf $ U\mapsto \pi_*\Sh{F}(U) $.

Next we  discuss sheaves of $E_{\infty}$-ring spectra.
Let $ \Cat{C} $ be a small Grothendieck site. By a \emph{presheaf} of
$ E_\infty $-rings on $ \Cat{C} $ we shall mean simply a functor $
\Sh{F}:\Cat{C}^{op}\to \Rings $. The default example of a site
$\Cat{C} $ will be the small \'etale site $ \Sh{X}_{\acute{e}t} $ of a stack $ \Sh{X} $
\cite[Ch. 12]{\Champs}.

A presheaf $\Sh{F}$ of $ E_\infty $-rings on $\Cat{C}$ is said to satisfy {\em hyperdescent}
or that it is a {\em sheaf} provided that the natural map $
\Sh{F}(X)\to\holim \Sh{F}(U_{\bullet}) $ is a weak equivalence for
every hypercover $U_{\bullet}\to X$.
Hyperdescent is closely related to fibrancy in Jardine's model
category structure \cite{\JardineSimplicial,\JardineSpectra}.
Specifically if $F\to QF$ is a fibrant replacement in Jardine's model
structure, then \cite{\DHI} shows that $F$ satisfies hyperdescent if
and only if $F(U)\to QF(U)$ is a weak equivalence for all $U$.

When the site $\Cat{C}$ has enough points, one may use Godement
resolutions in order to ``sheafify" a presheaf  \cite[Section
3]{\JardineSimplicial}. In particular, since $ \Sh{X}_{\acute{e}t} $ has enough
points we may form the Godement resolution $\Sh{F}\to G\Sh{F}$.
The global sections of $ G\Sh{F} $ are called the derived global
sections of $ \Sh{F} $ and Jardine's construction also gives a
spectral sequence to compute the homotopy groups
\begin{align}\label{ss:jardine}
H^s(\Sh{X},\pi_t\Sh{F})\Rightarrow \pi_{t+s}R\Gamma\Sh{F}.
\end{align}

\section{Moduli of Elliptic Curves and Level Stuctures}\label{sec:moduli}
	
In this section, we summarize the results of interest regarding the moduli stacks of elliptic curves and level structures. Standard references for the ordinary geometry are Deligne-Rapoport \cite{\DeligneRapoport}, Katz-Mazur \cite{\KatzMazur}, and Silverman \cite{\Silverman}.

A \emph{curve} over a base scheme (or stack) $ S $ is a proper, flat morphism $ p: C\to S $, of finite presentation and relative dimension one. An \emph{elliptic curve} is a curve $ p:C\to S $ of genus one whose geometric fibers are connected, smooth, and proper, equipped with a section $ e:S\to C $ or equivalently, equipped with a commutative group structure \cite[2.1.1]{\KatzMazur} . These objects are classified by the moduli stack of elliptic curves $ \M^0 $.

The $ j $-invariant of an elliptic curve gives a map $ j:\M^0 \to \A^1$ which identifies $\A^1 $ with the coarse moduli scheme \cite[8.2]{\KatzMazur}. Thus $ \M^0 $ is not proper, and in order to have Grothendieck-Serre duality this is a property we need. Hence, we shall work with the compactification $ \M $ of $ \M^0 $, which has a modular description in terms of generalized elliptic curves: it classifies proper curves of genus one, whose geometric fibers are connected and allowed to have an isolated nodal singularity away from the point marked by the specified section $ e $ \cite[II.1.12]{\DeligneRapoport}.

If $ C $ is smooth, then the multiplication by $ n $ map $ [n]:C\to C $ is finite flat map of degree $ n^2 $ \cite[II.1.18]{\DeligneRapoport}, whose kernel we denote by $ C[n] $. If $ n $ is invertible in the ground scheme $ S $, the kernel $ C[n] $ is finite \'etale over $ S $, and \'etale locally isomorphic to $ (\Z/n)^2 $. A level $ n $ structure is then a choice of an isomorphism $ \phi:(\Z/n)^2\to C[n] $ \cite[IV.2.3]{\DeligneRapoport}. We denote the moduli stack (implicitly understood as an object over $ \Spec\Z[1/n] $) which classifies smooth elliptic curves equipped with a level $ n $ structure by $ \Ml{n}^0 $. 

In order to give a modular description of the compactification $ \Ml{n} $ of $ \Ml{n}^0 $, we need to talk about so-called N\'eron polygons. We recall the definitions from \cite[II.1]{\DeligneRapoport}. A \emph{(N\'eron) $ n $-gon} is a curve over an algebraically closed field isomorphic to the standard $ n $-gon, which is the quotient of $ \P^1\times \Z/n$ obtained by identifying the infinity section of the $ i $-th copy of $ \P^1 $ with the zero section of the $ (i+1) $-st. A curve $ p:C \to S $ is a \emph{stable curve of genus one} if its geometric fibers are either smooth, proper, and connected, or N\'eron polygons. A \emph{generalized elliptic curve} is a stable curve of genus one, equipped with a morphism $ +: C^{sm}\cross{S} C\to C$ which
\begin{enumerate}
\item[(a)] restricts to the smooth locus $ C^{sm} $ of $ C $, making it into a commutative group scheme, and
\item[(b)] gives an action of the group scheme $ C^{sm} $ on $ C $, which on the singular geometric fibers of $ C $ is given as a rotation of the irreducible components.
\end{enumerate}
Given a generalized elliptic curve $p:C\to S $, there is a locally finite family of disjoint closed subschemes $ \left(S_n\right)_{n\geq 1} $ of $ S $, such that $ C $ restricted to $ S_n $ is isomorphic to the standard $ n $-gon, and the union of all $ S_n $'s is the image of the singular locus $ C^{sing} $ of $ C $ \cite[II.1.15]{\DeligneRapoport}.

The morphism $ n:C^{sm}\to C^{sm} $ is again finite and flat, and if $ C $ is an $ m $-gon, the kernel $ C[n] $ is \'etale locally isomorphic to $ (\mu_n\times \Z/(n,m)) $ \cite[II.1.18]{\DeligneRapoport}. In particular, if $ C $ a generalized elliptic curve whose geometric fibers are either smooth or $ n $-gons, then the scheme of $ n $-torsion points $ C[n] $ is \'etale locally isomorphic to $ (\Z/n)^2 $. These curves give a modular interpretation of the compactification $ \Ml{n} $ of $ \Ml{n}^0 $. The moduli stack $ \Ml{n} $ over $ \Z[1/n] $ classifies generalized elliptic curves with geometric fibers that are either smooth or $ n $-gons, equipped with a level $ n $ structure, i.e. an isomorphism $ \varphi:(Z/n)^2\to C[n] $ \cite[IV.2.3]{\DeligneRapoport}.

Note that $ \Gl $ acts on $ \Ml{n} $ on the right by pre-composing with the level structure, that is, $g: (C,\varphi) \mapsto (C,\varphi\circ g)$. If $ C $ is smooth, this action is free and transitive on the set of level $ n $ structures. If $ C $ is an $ n $-gon, then the stabilizer of a given level $n $ structure is a subgroup of $ \Gl $ conjugate to the upper triangular matrices $U=\left\{\begin{pmatrix} 1& *\\0 & 1 \end{pmatrix}\right\}$.

The forgetful map $ \Ml{n}^0\to \M^0[1/n] $ extends to the compactifications, where it is given by forgetting the level structure and contracting the irreducible components that do not meet the identity section \cite[IV.1]{\DeligneRapoport}. The resulting map $ q:\Ml{n}\to \M[1/n] $ is a finite flat cover of $ \M[1/n] $ of degree $ |GL_2(\Z/n)| $. Moreover, the restriction of $ q $ to the locus of smooth curves is an \'etale $ \Gl $-torsor, and over the locus of singular curves, $ q $ is ramified of degree $ n $ \cite[A1.5]{\Katz}. In fact, at the ``cusps" the map $ q $ is given as \[ q:\Ml{n}^{\infty}\cong\Hom_{U}(\Gl,\M^{\infty})\to \M^{\infty}.\]

Note that level $ 1 $ structure on a generalized elliptic curve $ C/S $ is nothing but the specified identity section $ e: S\to C $. Thus we can think of $ \M $ as $ \Ml{1} $.

The objects $ \Ml{n} $, $ n\geq 1 $, come equipped with ($\Z$-)graded sheaves, the tensor powers $ \om{\Ml{n}}{*} $ of the sheaf of invariant differentials \cite[I.2]{\DeligneRapoport}. Given a generalized elliptic curve $ p:C\rightleftarrows S: e $, let $ \Sh{I} $ be the ideal sheaf of the closed embedding $ e $. The fact that $ C $ is nonsingular at $ e $ implies that the map $ p_*(\Sh{I}/\Sh{I}^2) \to p_*\omega_{C/S}$ is an isomorphism \cite[II.8.12]{\HartshorneAG}. Denote this sheaf on $ S $ by $ \omega_C $. It is locally free of rank one, because $ C $ is a curve over $ S $, with a potential singularity away from $ e $. The sheaf of invariant differentials on $ \Ml{n} $ is then defined by \[ \omega_{\Ml{n}}(S\xrightarrow{C}\Ml{n})=\omega_C. \] It is a quasi-coherent sheaf which is an invertible line bundle on $ \Ml{n} $.

The ring of modular forms with level $ n $ structures is defined to be the graded ring of global sections \[ MF(n)_*=H^0(\Ml{n},\om{\Ml{n}}{*}), \]
where, as usual, we denote $ MF(1)_* $ simply by $ MF_* $.

\section{Topological Modular Forms and Level Structures}\label{sec:dermoduli}

By the obstruction theory of Goerss-Hopkins-Miller, as well as work of Lurie, the moduli stack $\M$ has been upgraded to a derived Deligne-Mumford stack, in such a way that the underlying ordinary geometry is respected. Namely, a proof of the following theorem can be found in Mark Behrens' notes \cite{\ConstructTMF}.
\begin{thm}[Goerss-Hopkins-Miller, Lurie]\label{ConstructTMF}
The moduli stack $ \M $ admits a sheaf of $ E_\infty $-rings $ \Sh{O}^{top} $ on its \'etale site which makes it into a derived Deligne-Mumford stack. For an \'etale map $ \Spec R\to \M $ classifying a generalized elliptic curve $ C/R $, the sections $ \Sh{O}^{top}(\Spec R) $ form an even weakly periodic ring spectrum $ E $ such that
\begin{enumerate}
\item[(a)] $ \pi_0E = R $, and
\item[(b)] the formal group $ G_E $ associated to $ E $ is isomorphic to the completion $ \hat{C} $ at the identity section.
\end{enumerate}
Moreover, there are isomorphisms of quasi-coherent $\pi_0 \Sh{O}^{top} $-modules $ \pi_{2k}\Sh{O}^{top} \cong \om{\M}{k}$ and $ \pi_{2k+1}\Sh{O}^{top}\cong 0 $ for all integers $ k $.
\end{thm}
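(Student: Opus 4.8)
\begin{proofsketch}
The plan is to follow the Goerss--Hopkins--Miller obstruction-theoretic strategy, constructing the sheaf first on the smooth locus $ \M^0 $ and then extending it across the cusps. The starting point is that every \'etale map $ \Spec R\to \M^0 $ classifying a smooth elliptic curve $ C/R $ produces, via Landweber exactness, an even weakly periodic homology theory. Indeed, the formal group $ \hat{C} $ has height at most two, and the \'etale (hence flat) hypothesis guarantees that the classifying map $ MU_*\to R[u^{\pm 1}] $ satisfies the Landweber regularity criterion, so that $ X\mapsto MU_*(X)\otimes_{MU_*}R[u^{\pm 1}] $ is a homology theory represented by a spectrum $ E $ with $ \pi_0 E\cong R $ and $ \pi_2 E\cong\omega_C $. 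This yields a presheaf of homotopy-commutative ring spectra on the \'etale site, but only up to homotopy.

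The heart of the matter is to rigidify this presheaf to a presheaf of genuine $ E_\infty $-rings, and for this I would invoke the Goerss--Hopkins obstruction theory \cite{\GoerssHopkins}. The space of $ E_\infty $-structures realizing a prescribed homotopy-commutative algebra, and more generally the moduli space of realizations of an entire diagram of such algebras, is controlled by a Bousfield--Kan spectral sequence whose obstruction groups are Andr\'e--Quillen cohomology groups $ H^s_{AQ} $ of the relevant $ E_\infty $-algebra with coefficients in shifts of its own homotopy. The crucial input, due to Goerss--Hopkins--Miller, is that for the Lubin--Tate spectra these obstruction groups vanish in the range that matters, and that the resulting moduli space of realizations is equivalent to the classifying space of the automorphisms of the underlying formal group. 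This makes the local realizations canonical and strictly functorial, so that automorphisms in the \'etale site, in particular the Morava stabilizer action, are realized on the point-set level; descent along the Godement resolution then assembles the local $ E_\infty $-rings into a sheaf on $ \M^0_{\acute et} $, with the spectral sequence \eqref{ss:jardine} converging because the coherent cohomology groups $ H^*(\M^0,\om{\M^0}{k}) $ are concentrated in the expected range.

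Next I would extend the sheaf across the cusps to obtain $ \Sh{O}^{top} $ on all of $ \M $. Near a cusp the generalized elliptic curve degenerates to a N\'eron $ 1 $-gon whose smooth locus is a form of $ \Gm $, so the associated formal group $ \hat{\Gm}_m $ has height one and the relevant local theory is a completed form of $ K $-theory. Landweber exactness again supplies the local homology theories, and one checks that the corresponding Andr\'e--Quillen obstruction groups continue to vanish, so the same obstruction theory produces $ E_\infty $-structures compatible with those on the smooth locus; gluing yields a sheaf on $ \M $ whose underlying ordinary stack is the classical $ \M $. Properties $ (a) $ and $ (b) $ are then built in: $ \pi_0 $ of the local sections is $ R $ by Landweber exactness, and the formal group of the constructed theory is $ \hat{C} $ by construction.

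Finally, the identification of the homotopy sheaves is forced by even weak periodicity. The theory is even, so $ \pi_{2k+1}\Sh{O}^{top}\cong 0 $; weak periodicity then gives an isomorphism $ \pi_{2k}\Sh{O}^{top}\cong(\pi_2\Sh{O}^{top})^{\otimes k} $ of invertible $ \pi_0 $-modules, and since $ \pi_2\Sh{O}^{top}\cong\omega_C=\om{\M}{1} $, sheafifying produces $ \pi_{2k}\Sh{O}^{top}\cong\om{\M}{k} $. The main obstacle throughout is the Andr\'e--Quillen vanishing computation underlying the obstruction theory, equivalently the Hopkins--Miller rigidity for Lubin--Tate spectra, together with the moduli-theoretic bookkeeping needed to make the \'etale-local realizations strictly functorial so that genuine \'etale descent applies. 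Extending compatibly over the cusps, where the curve is singular and the height of the formal group drops, is a secondary but genuine subtlety. A complete account along these lines appears in \cite{\ConstructTMF}.
\end{proofsketch}
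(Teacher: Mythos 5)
The paper contains no proof of this theorem: it is imported wholesale as the Goerss--Hopkins--Miller--Lurie theorem, with the proof deferred to Behrens' notes \cite{ConstructTMF}, so the only meaningful comparison is against the construction given there. Your sketch assembles the right ingredients --- Landweber exactness to produce the \'etale-local homotopy-commutative elliptic spectra, Goerss--Hopkins obstruction theory to rigidify, and a separate treatment of the cusps where the formal group drops to height one --- and your final paragraph deducing $\pi_{2k}\Sh{O}^{top}\cong\om{\M}{k}$ and the vanishing of odd homotopy from even weak periodicity is correct.

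However, the rigidification step as you describe it has a genuine gap. There is no integral obstruction theory whose Andr\'e--Quillen obstruction groups are known to vanish for the whole \'etale diagram over $\M^0$ at once; the vanishing results you invoke (Hopkins--Miller rigidity, the identification of the moduli space of realizations with the classifying space of automorphisms of the formal group) are theorems about Lubin--Tate spectra, i.e.\ about the $K(2)$-local realization problem at the supersingular points. The actual construction is organized by arithmetic and chromatic fracture: one works one prime at a time, builds the $K(2)$-localization of the sheaf on a formal neighborhood of the supersingular locus via Goerss--Hopkins--Miller and Serre--Tate theory, builds the $K(1)$-localization on the ordinary locus and at the cusps via the $K(1)$-local obstruction theory whose algebraic input is the $\theta$-algebra of $p$-adic $K$-theory (precisely the machinery this paper recalls in Theorem \ref{Obstruct} and uses to construct $\Sh{O}(n)^{top}$), constructs a chromatic gluing map between the two, and only then assembles the $p$-complete and rational pieces. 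Your sentence ``one checks that the corresponding Andr\'e--Quillen obstruction groups continue to vanish'' at the cusps conceals exactly this point: the computation that makes the cusps work is the vanishing of $H^s_\theta$ for the Tate-curve $\theta$-algebra (cf.\ Remark \ref{rm:TateKthry}), not an Andr\'e--Quillen calculation for a Landweber-exact presheaf. Without the fracture decomposition the obstruction-theoretic step of your argument does not go through as stated.
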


The spectrum of topological modular forms $ Tmf $ is defined to be the $ E_\infty $-ring spectrum of global sections $ R\Gamma(\Sh{O}^{top}) $.

We remark that inverting $ 6 $ kills all torsion in the cohomology of $ \M $ as well as the homotopy of $ Tmf $. In that case, following the approach of this paper would fairly formally imply Anderson self-duality for $ Tmf[1/6] $ from the fact that $ \M $ has Grothendieck-Serre duality. 
To understand integral duality on the derived moduli stack of generalized elliptic curves, we would like to use the strategy of descent, dealing separately with the $ 2 $ and $ 3 $-torsion. The case when $ 2 $ is invertible captures the $ 3 $-torsion, is more tractable, and is thoroughly worked out in this paper by using the smallest good cover by level structures, $ \Ml{2} $. 
The $ 2 $-torsion phenomena involve computations that are more daunting and will be dealt with subsequently. However, the same methodology works to give the required self-duality result.

To begin, we need to lift $ \Ml{2} $ and the covering map $ q:\Ml{2}\to\M $ to the setting of derived Deligne-Mumford stacks. We point out that this is not immediate from Theorem \ref{ConstructTMF} because the map $ q $ is not \'etale. However, we will explain how one can amend the construction to obtain a sheaf of $ E_\infty $-rings $ \Sh{O}(2)^{top} $ on $ \Ml{2} $. We will also incorporate the $ GL_2(\Z/2) $-action, which is crucial for our result.

We will in fact sketch an argument based on Mark Behrens' \cite{\ConstructTMF} to construct sheaves $ \Sh{O}(n)^{top} $ on $ \Ml{n}[1/2n] $ for any $ n $, as the extra generality does not complicate the solution.

As we remarked earlier, the restriction of $q $ to the smooth locus $\Ml{n}^{0}$ is an \'etale $\Gl$-torsor, hence we automatically obtain $\Sh{O}(n)|_{\Ml{n}^{0}}$ together with its $\Gl$-action. We will use the Tate curve and $ K(1) $-local obstruction theory to construct the $ E_\infty $-ring of sections of the putative $ \Sh{O}(n)^{top} $ in a neighborhood of the cusps, and sketch a proof of the following theorem.

\begin{thm}[Goerss-Hopkins]
The moduli stack $ \Ml{n} $ (as an object over $ \Z[1/2n] $) admits a sheaf of even weakly periodic $ E_\infty $-rings $ \Sh{O}(n)^{top} $ on its \'etale site which makes it into a derived Deligne-Mumford stack. There are isomorphisms of quasi-coherent $ \pi_0\Sh{O}(n)^{top} $-modules $ \pi_{2k}\Sh{O}(n)^{top}\cong \om{\Ml{n}}{k} $ and $ \pi_{2k-1}\Sh{O}(n)^{top}\cong 0 $ for all integers $ k $.
Moreover, the covering map $ q:\Ml{n}\to \M[1/2n] $ is a map of derived Deligne-Mumford stacks.
\end{thm}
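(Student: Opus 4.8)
The plan is to construct $\Sh{O}(n)^{top}$ by treating the smooth locus and a neighborhood of the cusps separately and then gluing, closely following Behrens' construction of $\Sh{O}^{top}$ on $\M$ in \cite{\ConstructTMF}.

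First, over the smooth locus $\Ml{n}^{0}$ the sheaf is obtained for free by pullback. Since $q$ restricts there to an \'etale $\Gl$-torsor over $\M^{0}[1/2n]$, the composite $\Ml{n}^{0}\to\M^{0}[1/2n]\hookrightarrow\M$ is \'etale, and restricting $\Sh{O}^{top}$ along it yields a sheaf of even weakly periodic $E_\infty$-rings on $\Ml{n}^{0}$ carrying a $\Gl$-action through the deck transformations of the torsor. By Theorem \ref{ConstructTMF} its homotopy sheaves are $\pi_{2k}\cong q^{*}\om{\M}{k}\cong\om{\Ml{n}}{k}$ with the odd ones vanishing.

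The substance of the argument is near the cusps, where by \cite[A1.5]{\Katz} the map $q$ is ramified of degree $n$ and \'etale descent is unavailable. Here I would use that the universal curve is the Tate curve $\mathrm{Tate}(q)$ \cite{\DeligneRapoport}, whose $n$-torsion is generated by a primitive root of unity $\zeta_n$ and a chosen $n$-th root $q^{1/n}$; a level $n$ structure thus forces one to adjoin both, which is exactly the source of the degree-$n$ ramification $q=(q^{1/n})^{n}$ and of the description $q:\Ml{n}^{\infty}\cong\Hom_U(\Gl,\M^{\infty})\to\M^{\infty}$. Working over $\Z[1/2n]$ and one prime $p\nmid 2n$ at a time, the elliptic spectrum attached to this Tate curve is $K(1)$-locally built from $p$-complete $K$-theory of the relevant (ramified) Laurent ring, so I would invoke $K(1)$-local Goerss-Hopkins obstruction theory \cite{\GoerssHopkins}: the obstructions to existence and uniqueness of the $E_\infty$-structure, and to its $\Gl$-equivariance where $\Gl$ acts via its action on $\zeta_n$ and $q^{1/n}$, lie in the (topological) Andr\'e-Quillen cohomology of the relevant $\theta$-algebra. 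Since this homotopy is concentrated in even degrees and is locally free of rank one over the cusp ring, the pertinent obstruction groups vanish for degree reasons, producing an essentially unique equivariant $E_\infty$-ring of sections with $\pi_{2k}\cong\om{\Ml{n}}{k}$ and $\pi_{2k-1}\cong 0$.

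Finally I would glue. The substack $\Ml{n}^{0}$ and a formal neighborhood of $\Ml{n}^{\infty}$ cover $\Ml{n}$ and overlap in the punctured neighborhood, over which both the pulled-back sheaf of the first step and the Tate-curve sections of the second restrict to the elliptic spectrum of $\mathrm{Tate}(q)$; the essential uniqueness from obstruction theory identifies these restrictions, so the pieces glue to a global $\Sh{O}(n)^{top}$ (with the cusp sections themselves assembled from their rational and $K(1)$-local parts by an arithmetic fracture square, as in \cite{\ConstructTMF}). The homotopy-sheaf computations agree on the overlap and so glue to the asserted global isomorphisms, and since both pieces are built compatibly over the corresponding pieces of $\M$, the structure maps assemble into a map of derived Deligne-Mumford stacks $q:\Ml{n}\to\M[1/2n]$. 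I expect the genuine difficulty to be the cusp construction: with $q$ truly ramified, one must verify the vanishing of the Goerss-Hopkins obstruction groups for the Tate curve equipped with a level structure while keeping the $\Gl$-action coherent through the ramified extension adjoining $q^{1/n}$.
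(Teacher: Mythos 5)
Your overall route is the same as the paper's: pull back $\Sh{O}^{top}$ over the smooth locus, where $q$ restricts to an \'etale $\Gl$-torsor; construct the sections near the cusps from the Tate curve by $K(1)$-local equivariant Goerss--Hopkins obstruction theory, one prime $p\nmid 2n$ at a time; and glue the $K(1)$-local and rational pieces following \cite{\ConstructTMF}.

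The one genuine gap is your justification for the vanishing of the equivariant obstruction groups at the cusps, which you attribute to ``degree reasons'' (evenness and local freeness of the homotopy). That only disposes of the non-equivariant $\theta$-algebra cohomology $H^{s}_{\theta}$ for $s>1$ (Lemma 7.5 of \cite{\ConstructTMF}, applied to $B(p^\infty,n)_*$). The equivariant obstruction groups are computed by a composite functor spectral sequence $H^r\bigl(\Gl, H^{s-r}_\theta(A,A)\bigr)\Rightarrow H^s_{\Gl-\theta}(A,A)$, and the group cohomology of $\Gl$ does \emph{not} vanish for degree reasons: $p$ may divide $|\Gl|$ even though $p\nmid n$ (for $n=2$ and $p=3$ one has $3\mid |GL_2(\Z/2)|=6$, and the nonvanishing of $H^*(S_3,\pi_*Tmf(2))$ is the engine of the entire rest of the paper). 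What actually kills the higher group cohomology near the cusps is that the $p$-adic $K$-theory of the cusp sections is the $U$-induced module $\Hom_U\bigl(\Gl,B(p^\infty,n)_*\bigr)$, where $U$ is the upper-triangular stabilizer of a cusp; by Shapiro's lemma the relevant cohomology reduces to $H^r(U,-)$, which vanishes for $r>0$ because $|U|=n$ is invertible in the $p$-complete coefficients. You correctly flag the ramification as the hard point, but the induced-module structure coming from the $U$-orbit decomposition of the cusps is the precise mechanism needed to make the obstruction theory go through, and it should be stated rather than replaced by a degree argument.
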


\subsection{Equivariant $ K(1) $-local Obstruction Theory}

This is a combination of the Goerss-Hopkins' \cite{\GoerssHopkins, \GH} and Cooke's \cite{\Cooke} obstruction theories, which in fact is contained although not explicitly stated in \cite{\GoerssHopkins,\GH}.
Let $ G $ be a finite group; a $ G $-equivariant $ \theta $-algebra is an algebraic model for the $ p $-adic $ K $-theory of an $ E_\infty $ ring spectrum with an action of $ G $ by $ E_\infty $-ring maps. Here, $ G $-equivariance means that the action of $ G $ commutes with the $ \theta $-algebra operations. As $ G $-objects are $ G $-diagrams, the obstruction theory framework of \cite{\GH} applies.

Let $ H^s_{G-\theta}(A,B[t]) $ denote the $ s $-th derived functor of derivations from $ A $ into $ B[t] $\footnote{For a graded module $ B $, $ B[t] $ is the shifted graded module with $ B[t]_k=B_{t+k}$.} in the category of $ G $-equivariant $ \theta $-algebras. These are the $ G $-equivariant derivatons from $ A $ into $ B[t] $, and there is a composite functor spectral sequence
\begin{align}\label{GObstr}
H^r(G, H^{s-r}_\theta(A,B[t]))\Rightarrow H^s_{G-\theta}(A,B[t]).
\end{align}

\begin{thm}[Goerss-Hopkins]\label{Obstruct}
\begin{enumerate}
\item[\textit{(a)}] Given a $ G-\theta $-algebra $ A $, the obstructions to existence of a $ K(1) $-local even-periodic $ E_\infty $-ring $ X $ with a $ G $-action by $ E_\infty $-ring maps, such that $ K_*X\cong A $ (as $ G-\theta $-algebras) lie in \[ H^s_{G-\theta}(A,A[-s+2]) \text{ for } s\geq 3.\]The obstructions to uniqueness like in \[ H^s_{G-\theta}(A,A[-s+1]) \text{ for } s\geq 2. \]
\item[\textit{(b)}] Given $ K(1) $-local $ E_\infty $-ring $ G $-spectra $ X$ and $ Y $ whose $ K $-theory is $ p $-complete, and an equivariant map of $ \theta $-algebras $ f_*:K_*X\to K_* Y $, the obstructions to lifting $ f_* $ to an equivariant map of $ E_\infty $-ring spectra lie in \[ H^s_{G-\theta}(K_*X,K_*Y[-s+1]), \text{ for } s\geq 2, \]
while the obstructions to uniqueness of such a map lie in 
\[ H^s_{G-\theta}(K_*X,K_*Y[-s]), \text{ for } s\geq 1. \]
\item[\textit{(c)}] Given such a map $ f:X\to Y $, there exists a spectral sequence \[ H^s_{G-\theta}(K_*X,K_*Y[t])\Rightarrow \pi_{-t-s}(E_\infty(X,Y)^G,f), \]
computing the homotopy groups of the space of equivariant $ E_\infty $-ring maps, based at $ f $.
\end{enumerate}
\end{thm}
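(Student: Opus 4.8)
The plan is to recognize this statement as an instance of the general Goerss-Hopkins obstruction theory \cite{\GoerssHopkins,\GH}, applied not to plain $ E_\infty $-rings but to diagrams indexed by the group $ G $. A $ K(1) $-local even-periodic $ E_\infty $-ring equipped with a $ G $-action by $ E_\infty $-ring maps is exactly a functor from the one-object groupoid $ BG $ (with automorphisms $ G $) into $ K(1) $-local $ E_\infty $-rings; since the framework of \cite{\GH} is designed to resolve and realize objects in any small diagram category, the entire apparatus---resolution model structure, moduli space of realizations, and its decomposition into a Postnikov-type tower---transports to the $ G $-equivariant setting, with the algebraic invariant being $ p $-adic $ K $-theory $ K_* $, now valued in $ G $-equivariant $ \theta $-algebras.

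First I would fix the resolution (or $ E_2 $-) model structure on simplicial $ BG $-diagrams of $ K(1) $-local $ E_\infty $-rings, arranged so that weak equivalences and fibrations are detected on $ K_* $ after $ p $-completion; this is where the hypothesis that the relevant $ K $-theories are $ p $-complete enters. With this in place, the moduli space of all realizations of a fixed $ G $-equivariant $ \theta $-algebra $ A $ is the homotopy limit of a tower of $ n $-stages, and both the obstruction to promoting an $ n $-stage to an $ (n+1) $-stage and the indeterminacy in doing so are governed by the Andr\'e-Quillen cohomology of the category of $ G $-equivariant $ \theta $-algebras. This cohomology is precisely $ H^*_{G-\theta} $, and the degree shifts appearing in part (a)---$ A[-s+2] $ for existence obstructions in degree $ s\geq 3 $ and $ A[-s+1] $ for uniqueness in degree $ s\geq 2 $---are the standard Goerss-Hopkins indices, left unchanged by the passage from single objects to $ G $-diagrams.

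For parts (b) and (c) I would run the same machine on mapping spaces: the space $ E_\infty(X,Y)^G $ of equivariant $ E_\infty $-maps is assembled from a tower whose layers are controlled by $ H^*_{G-\theta}(K_*X,K_*Y[t]) $. Lifting a given algebraic map $ f_* $ along this tower produces the obstruction groups $ H^s_{G-\theta}(K_*X,K_*Y[-s+1]) $ and $ H^s_{G-\theta}(K_*X,K_*Y[-s]) $ of (b), while the associated Bousfield-Kan spectral sequence of the tower, based at a realization $ f $, is exactly the spectral sequence of (c) converging to $ \pi_{-t-s}(E_\infty(X,Y)^G,f) $. To reconcile this with the non-equivariant theory and to render the groups computable, I would finally establish the composite functor spectral sequence \eqref{GObstr} by observing that $ G $-equivariant derivations factor as non-equivariant derivations followed by $ G $-fixed points; the Grothendieck spectral sequence of this composition has $ E_2 $-term $ H^r(G,H^{s-r}_\theta(A,B[t])) $ and abuts to $ H^s_{G-\theta}(A,B[t]) $.

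The hard part will not be the formal diagrammatic bookkeeping---which is why the result is \emph{contained} in \cite{\GoerssHopkins,\GH} even though it is not stated there---but verifying that the hypotheses of that machine genuinely hold in the equivariant $ K(1) $-local, $ p $-complete context. Concretely, one must check that $ p $-adic $ K $-theory is a faithful enough algebraic model, i.e. that $ G $-equivariant $ \theta $-algebras form a well-behaved algebraic category with enough projectives and a usable cotangent complex and that $ K_* $ detects $ K(1) $-equivalences, and that the rigidification of homotopy $ G $-actions supplied by Cooke's theory \cite{\Cooke} is compatible with the Goerss-Hopkins resolution, so that strict $ G $-diagrams are the correct objects to resolve. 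The remaining subtleties are the identification of the diagram-category Andr\'e-Quillen cohomology with $ G $-equivariant $ \theta $-algebra cohomology and the convergence of the composite functor spectral sequence.
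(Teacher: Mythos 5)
Your proposal follows exactly the route the paper itself takes: the paper offers no independent proof of this theorem, but justifies it in one line by observing that $G$-objects are $G$-diagrams, so the diagrammatic obstruction theory of Goerss--Hopkins applies verbatim with $p$-adic $K$-theory valued in $G$-equivariant $\theta$-algebras, and the composite functor spectral sequence \eqref{GObstr} arises from factoring equivariant derivations through $G$-fixed points. Your elaboration of the resolution model structure, the tower of $n$-stages, and the points requiring verification is a faithful (and more detailed) account of the same argument.
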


\subsection{The Igusa Tower}

Fix a prime $ p>2 $ which does not divide $ n $. Let $ \M_p $ denote the $ p $-completion of $ \M $, and let $\Msing$ denote a formal neighborhood of the cusps of $\M_{p}$. We will use the same embellishments for the moduli with level structures.

The idea is to use the above Goerss-Hopkins obstruction theory to construct the $ E_\infty $-ring of sections over $ \Ml{n}^\infty $ of the desired $ \Sh{O}(n)^{top} $, from the algebraic data provided by the Igusa tower, which will supply an equivariant $ \theta $-algebra.

We will consider the moduli stack $ \Ml{n} $ as an object over $ \Z[1/n,\zeta] $, for $ \zeta $ a primitive $ n $-th root of unity. The structure map $ \Ml{n}\to \Spec \Z[1/n,\zeta]  $ is given by the Weil pairing \cite[5.6]{\KatzMazur}.

The structure of $ \M^\infty $ as well as $ \Ml{n}^\infty $ is best understood by the Tate curves. We already mentioned that the singular locus is given by N\'eron $ n $-gons; the $ n $-Tate curve is a generalized elliptic curve in a neighborhood of the singular locus. It is defined over the ring $ \Z[[q^{1/n}]] $, so that it is smooth when $ q $ is invertible and a N\'eron $ n $-gon when $ q $ is zero. For details of the construction, the reader is referred to \cite[VII]{\DeligneRapoport}.

>From \cite[VII]{\DeligneRapoport} and \cite[Ch 10]{\KatzMazur}, we learn that $ \Msing=\Spf \Z_p[[q]] $, and that $ \displaystyle{\Mlsing{n}=\coprod_{cusps}\Spf\Z_p[\zeta][[q^{1/n}]]} $. The $ \Gl $-action on $ \Mlsing{n} $ is understood by studying level structures on Tate curves, and is fully described in \cite[Theorem 10.9.1]{\KatzMazur}. The group $ U $ of upper-triangular matrices in $ \Gl $ acts on $B(n)= \Z_p[\zeta][[q^{1/n}]] $ by the roots of unity: $\left(\begin{array}{cc} 1 & a \\ 0 & 1 \end{array}\right)\in U$ sends $ q^{1/n} $ to $ \zeta^a q^{1/n} $. Note that then, the inclusion $ \Z_p[[q]]\to B(n) $ is $ U $-equivariant, and $ \Mlsing{n} $ is represented by the induced $ \Gl $-module $ A(n)=\Hom_U(\Gl,B(n)) $.

Denote by $ \Mlsing{p^k,n} $ a formal neighborhood of the cusps in the moduli stack of generalized elliptic curves equipped with level structure maps
\begin{align*}
\eta:\mu_{p^k}\xrightarrow{\sim} C[p^k]\\
\varphi:(\Z/n)^2\xrightarrow{\sim} C[n].
\end{align*}
Note that as we are working in a formal neighborhood of the singular locus, the curves $ C $ classified by $ \Ml{p^k,n}^\infty $ have ordinary reduction modulo $ p $. 
As $ k $ runs through the positive integers, we obtain an inverse system called the Igusa tower, and we will write $ \Ml{p^\infty,n} $ for the inverse limit. It is the $ \Z_p^\times $-torsor over $ \Mlsing{n} $ given by the formal affine scheme $ \Spf\Hom(\Z_p^\times,A(n)) $ \cite[Theorem 12.7.1]{\KatzMazur}.

The Tate curve comes equipped with a canonical invariant differential, which makes $ \om{\Mlsing{n}}{} $ isomorphic to the line bundle associated to the graded $ A(n) $-module $ A(n)[1] $. We define $ A(n)_* $ to be the evenly graded $ A(n) $-module, which in degree $ 2t $ is $ H^0(\Mlsing{n},\om{\Mlsing{n}}{t})\cong A(n)[t] $. Similarly we define $ A(p^\infty,n)_* $.

The modules $A(p^\infty,n)_*= \Hom(\Z_p^\times,A(n)_*) $ have a natural $\Gl$-equivariant $\theta $-algebra structure,  with operations coming from the following maps
\begin{align*}
(\psi^k)^*:\Ml{p^\infty, n}&\to \Ml{p^\infty,n}\\
(C,\eta,\varphi)&\mapsto (C,\eta\circ[k],\varphi),\\
(\psi^p)^*:\Ml{p^\infty, n}&\to \Ml{p^\infty,n}\\
(C,\eta,\varphi)&\mapsto (C^{(p)},\eta^{(p)},\varphi^{(p)}).
\end{align*}
Here, $ C^{(p)} $ is the elliptic curve obtained from $ C $ by pulling back along the absolute Frobenius map on the base scheme. The multiplication by $ p $ isogeny $ [p]:C\to C $ factors through $ C^{(p)} $ as the relative Frobenius $ F:C\to C^{(p)} $ followed by the Verschiebung map $ V:C^{(p)}\to C $ \cite[12.1]{\KatzMazur}. The level structure $ \varphi^{(p)}:(\Z/n)^2\to C^{(p)}$ is the unique map making the diagram
\[ \xymatrix{
& (\Z/n)^2 \ar[d]^{\varphi}\ar@{-->}[ld]_{\varphi^{(p)}}\\
 C^{(p)} \ar[r]^-V &C
}\]
commute. Likewise, $ \eta^{(p)} $ is the unique level structure making an analogous diagram commute. More details on $ \eta^{(p)} $ are in \cite[Section 5]{\ConstructTMF}. The operation $ \psi^k $ comes from the action of $ \Z_p^\times $ on the induced module $ A(p^\infty,n)_*= \Hom(\Z_p^\times,A(n)_*)  $.

We point out that from this description of the operations, it is clear that the $ \Gl $-action commutes with the operations $ \psi^k $ and $ \psi^p $, which in particular gives an isomorphism
\begin{align}\label{EqInd}
 A(p^\infty, n)_*  \cong \Hom_U\left(G,\Hom\left(\Z_p^\times,B(n)_*\right) \right). 
 \end{align}
Denote by $ B(p^\infty,n)_* $ the $ \theta $-algebra $ \Hom(\Z_p^\times, B(n)_* ) $.

As a first step, we apply (a) of Theorem \ref{Obstruct} to construct even-periodic $ K(1) $-local $ \Gl $-equivariant $ E_\infty $-ring spectra $ Tmf(n)^\infty_p $ whose $ p $-adic $ K $-theory is given by $ A(p^\infty, n)_* $. The starting point is the input to the spectral sequence \eqref{GObstr}, the group cohomology
\begin{align}\label{ss:GObstruct}
H^r\left(\Gl, H^{s-r}_\theta\left(A(p^\infty,n)_*,A(p^\infty,n)_*\right)\right ).
\end{align}
\begin{rem}\label{rm:TateKthry}
Lemma 7.5 of \cite{\ConstructTMF } implies that $ H^s_\theta(B(p^\infty,n)_*,B(p^\infty,n)_*)=0 $ for $ s>1 $, from which we deduce the existence of a unique $ K(1) $-local weakly even periodic $ E_\infty $-ring spectrum that we will denote by $ K[[q^{1/n}]] $, whose $ K $-theory is the $ \theta $-algebra $ B(p^\infty,n)_* $. This spectrum $ K[[q^{1/n}]] $ should be thought of as the sections of $ \Sh{O}(n)^{top} $ over a formal neighborhood of a single cusp of $ \Ml{n} $.
\end{rem}

By the same token, we also know that $ H^s_\theta(A(p^\infty,n)_*,A(p^\infty,n)_*)=0 $ for $ s>1 $, and \[ H^0_{\theta}\left(A(p^\infty,n)_*,A(p^\infty,n)_*\right)\cong\Hom_U\left(\Gl,H^0_\theta\left(A(p^\infty,n)_*,B(p^\infty,n)_* \right)\right). \]
Thus the group cohomology \eqref{ss:GObstruct} is simply
\[ H^r\left(U, H^0_\theta\left(A(p^\infty,n)_*,B(p^\infty,n)_*\right)\right) \]
which is trivial for $ r>0 $ as the coefficients are $ p $-complete, and the group $ U $ has order $ n $, coprime to $ p $.
Therefore, the spectral sequence \eqref{GObstr} collapses to give that 
\[ H^s_{\Gl-\theta}\left(A(p^\infty,n)_*,A(p^\infty,n)_* \right) = 0, \text{ for } s>0.\]
Applying Theorem \ref{Obstruct} now gives our required $ \Gl $-spectra $ Tmf(n)^\infty_p $.

A similar argument produces a $ \Gl $-equivariant $ E_\infty $-ring map \[ q^\infty: Tmf^\infty_p\to Tmf(n)^\infty_p,\]where $ Tmf^\infty $ is given the trivial $ \Gl $-action.

\begin{proposition}\label{prop:htpyfixed}
The map $ q^\infty: Tmf^\infty_p\to Tmf(n)^\infty_p $ is the inclusion of homotopy fixed points.
\end{proposition}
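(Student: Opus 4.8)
The plan is to exploit the coinduced structure recorded in \eqref{EqInd}, namely that the $\Gl$-equivariant $\theta$-algebra $A(p^\infty,n)_*$ is coinduced from the subgroup $U$ of order $n$, $A(p^\infty,n)_*\cong\Hom_U(\Gl,B(p^\infty,n)_*)$, and to promote this to the level of spectra. The $U$-equivariant $K(1)$-local $E_\infty$-ring $K[[q^{1/n}]]$ of Remark \ref{rm:TateKthry} realizes $B(p^\infty,n)_*$, so I would form the coinduced $\Gl$-equivariant $E_\infty$-ring $\Hom_U(\Gl,K[[q^{1/n}]])$, a finite product of copies of $K[[q^{1/n}]]$ indexed by $U\backslash\Gl$ with its residual $\Gl$-action. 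Its $p$-adic $K$-theory is $\Hom_U(\Gl,B(p^\infty,n)_*)\cong A(p^\infty,n)_*$ as $\Gl$-equivariant $\theta$-algebras. Since we have already established that $H^s_{\Gl-\theta}(A(p^\infty,n)_*,A(p^\infty,n)_*)=0$ for $s>0$ (and likewise for the relevant shifts), the uniqueness clauses of Theorem \ref{Obstruct} identify this coinduced ring with the one produced by the obstruction theory, giving an equivalence of $\Gl$-equivariant $E_\infty$-rings
\[ Tmf(n)^\infty_p\simeq\Hom_U\bigl(\Gl,K[[q^{1/n}]]\bigr). \]

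With this model in hand, the proposition becomes a Shapiro-type computation. For the finite group $\Gl$ with subgroup $U$ and any $U$-spectrum $Z$, the adjunction between restriction and coinduction passes to homotopy fixed points --- a model for $E\Gl$ restricts to a model for $EU$ --- yielding a natural equivalence $\bigl(\Hom_U(\Gl,Z)\bigr)^{h\Gl}\simeq Z^{hU}$. Applied to $Z=K[[q^{1/n}]]$ this gives $\bigl(Tmf(n)^\infty_p\bigr)^{h\Gl}\simeq\bigl(K[[q^{1/n}]]\bigr)^{hU}$. Now $U$ has order $n$, coprime to $p$, and $K[[q^{1/n}]]$ is $K(1)$-local, hence $p$-complete; so the norm map is an equivalence, the homotopy fixed point spectral sequence collapses onto its $0$-line, and taking the $U$-invariant summand commutes with $K$-theory, giving $K_*\bigl(K[[q^{1/n}]]\bigr)^{hU}\cong(B(p^\infty,n)_*)^U$. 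Finally, because $q^\infty$ is $\Gl$-equivariant for the trivial action on the source, it is adjoint to a map $Tmf^\infty_p\to(Tmf(n)^\infty_p)^{h\Gl}$, and I would check that on $p$-adic $K$-theory this realizes the inclusion of invariants $K_*Tmf^\infty_p\hookrightarrow(A(p^\infty,n)_*)^{\Gl}\cong(B(p^\infty,n)_*)^U$; in the governing case $n=2$ (where $\zeta=-1$) this is exactly $\Hom(\Z_p^\times,\Z_p[[q]]_*)=K_*Tmf^\infty_p$. Since a $K$-theory isomorphism of $K(1)$-local spectra is an equivalence, this proves the claim, the descent of the coefficients from $\Z_p[\zeta]$ to $\Z_p$ for general $n$ being supplied by the residual determinant (Weil pairing) action.

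The hard part is the rigidification in the first step: upgrading the purely algebraic coinduced isomorphism of $\theta$-algebras to an honest equivalence of $\Gl$-equivariant $E_\infty$-rings, compatibly with both the multiplicative and the equivariant structures rather than merely abstractly. This is precisely where the vanishing of the equivariant obstruction groups $H^s_{\Gl-\theta}$ --- itself a consequence of the coprimality of $|U|=n$ with $p$ together with $p$-completeness of the coefficients --- is indispensable, through the uniqueness statements of the Goerss--Hopkins machinery. Once the coinduced model is secured the remaining ingredients, Shapiro's lemma and the order-prime-to-$p$ collapse, are formal; equivalently, one could run the $\Gl$-homotopy fixed point spectral sequence directly and invoke Shapiro's lemma together with $H^{>0}(U,-)=0$ on the $E_2$-page to the same effect.
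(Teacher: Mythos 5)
Your proposal follows essentially the same route as the paper: identify $Tmf(n)^\infty_p$ with the coinduced spectrum $\Hom_U(\Gl,K[[q^{1/n}]])$ (justified, as you do, by the vanishing of the equivariant obstruction groups), apply the Shapiro-type equivalence $\bigl(\Hom_U(\Gl,Z)\bigr)^{h\Gl}\simeq Z^{hU}$, and use that $|U|=n$ is coprime to $p$ so the fixed-point spectral sequence collapses onto invariants. The only cosmetic difference is that you verify the resulting map is an equivalence on $p$-adic $K$-theory while the paper checks it directly on homotopy groups; both are fine.
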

\begin{proof}
Note that from our construction it follows that $ Tmf(n)^\infty_p $ is equivalent to $ \Hom_U(\Gl,K[[q^{1/n}]]) $, where $ K[[q^{1/n}]] $ has the $ U $-action lifting the one we described above on its $ \theta $-algebra $ B(n)_* $. (This action lifts by obstruction theory to the $ E_\infty $-level because the order of $ U $ is coprime to $ p $.) Since $ Tmf^\infty_p $ has trivial $ \Gl $-action, the map $ q^\infty $ factors through the homotopy fixed point spectrum
\begin{align*}
\left( Tmf(n)^\infty_p \right)^{h\Gl} \cong K[[q^{1/n}]]^{hU}. 
\end{align*}
So we need that the map $ q': Tmf^\infty = K[[q]] \to K[[q^{1/n}]]^{hU}$ be an equivalence. The homotopy groups of $ K[[q^{1/n}]]^{hU} $ are simply the $ U $-invariant homotopy in $ K[[q^{1/n}]] $, because $ U $ has no higher cohomology with $ p $-adic coefficients. Thus $\pi_* q' $ is an isomorphism, and the result follows.
\end{proof}

\subsection{Gluing}

We need to patch these results together to obtain the sheaf $ {\Sh{O}}(n)^{top} $ on the \'etale site of $ \Ml{n} $.

To construct the presheaves $ \tilde{\Sh{O}}(n)^{top}_p $ on the site of affine schemes \'etale over $ \Ml{n} $, one follows the procedure of \cite[Step 2, Sections 7 and 8]{\ConstructTMF}. Thus for each prime $ p>2 $ which does not divide $ n $, we have $ \tilde{\Sh{O}}(n)_p $ on $ \Ml{n}_p $. As in \cite[Section 9]{\ConstructTMF}, rational homotopy theory produces a presheaf $ \tilde{\Sh{O}}(n)_{\Q} $. These glue together to give $ \tilde{\Sh{O}}(n) $ (note, $ 2n $ will be invertible in $ \tilde{\Sh{O}}(n) $).

By construction, the homotopy group sheaves of $ \tilde{\Sh{O}}(n)^{top} $ are given by the tensor powers of the sheaf of invariant differentials $ \om{\Ml{n}}{} $, which is a quasi-coherent sheaf on $ \Ml{n} $. Section $2$ of \cite{\ConstructTMF} explains how this data gives rise to a sheaf $ \Sh{O}(n)^{top} $ on the \'etale site of $ \Ml{n} $, so that the $ E_\infty $-ring spectrum $ Tmf(n) $ of global sections of $ \Sh{O}(n)^{top} $ can also be described as follows.

Denote by $ TMF(n) $ the $ E_\infty $-ring spectrum of sections $R\Gamma\left(\Sh{O}(n)^{top}|_{\Ml{n}^0}\right) $ over the locus of smooth curves. Let $ TMF(n)^\infty $ be the the $ E_\infty $-ring of sections of $ \Sh{O}(n)^{top}|_{\Ml{n}^0} $ in a formal neighborhood of the cusps. Both have a $  \Gl $-action by construction.
The equivariant obstruction theory of Theorem \ref{GObstr} can be used again to construct a $ \Gl $-equivariant map $ TMF(n)_{K(1)} \to TMF(n)^\infty_p$ that refines the $ q $-expansion map; again, the key point as that $ K_* (TMF(n)^\infty_p )$ is a $ U $-induced $ \Gl $-module. Pre-composing with the $ K(1)$-localization map, we get a $ \Gl $-$ E_\infty $-ring map $ TMF(n)\to TMF(n)^\infty $.

We build $ Tmf(n)_p $ as a pullback
\begin{align*}
\xymatrix{
Tmf(n)_p \ar[r]\ar[d] & Tmf(n)^\infty_p\ar[d] \\
TMF(n)_p \ar[r] & TMF(n)^{\infty}_p}
\end{align*}
All maps involved are $ \Gl $-equivariant maps of $ E_\infty $-rings, so $ Tmf(n) $ constructed this way has a $\Gl $-action as well.

\section{Descent and Homotopy Fixed Points}

We have remarked several times that the map $ q^0:\Ml{n}^0\to \M^0 $ is a $ \Gl $-torsor, thus we have a particularly nice form of \'etale descent. On global sections, this statement translates to the equivalence
\[ TMF[1/2n] \to TMF(n)^{h\Gl}.\]
The remarkable fact is that this property goes through for the compactified version as well.

\begin{thm}\label{thm:tmffixedpt}
The map $ Tmf[1/2n]\to Tmf(n)^{h\Gl} $ is an equivalence.
\end{thm}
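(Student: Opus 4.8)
The plan is to reduce the statement to the gluing (pullback) square from Section 5 that defines $Tmf(n)$, and to exploit the fact that the homotopy fixed point functor $(-)^{h\Gl}$, being itself a homotopy limit, commutes with all the homotopy limits in sight. Recall that $Tmf(n)_p$ is built as the homotopy pullback of the $\Gl$-equivariant diagram $TMF(n)_p\to TMF(n)^\infty_p\leftarrow Tmf(n)^\infty_p$. Applying $(-)^{h\Gl}$ therefore produces a pullback square with corners $(TMF(n)_p)^{h\Gl}$, $(TMF(n)^\infty_p)^{h\Gl}$ and $(Tmf(n)^\infty_p)^{h\Gl}$, and it suffices to identify each corner with the corresponding level-one sections and to observe that the resulting square is precisely the gluing square (the $n=1$ case of the same construction) computing $Tmf_p$.

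First I would treat the two ``smooth'' corners. Since $q^0:\Ml{n}^0\to\M^0$ is an \'etale $\Gl$-torsor, \'etale descent yields the equivalence $TMF[1/2n]\simeq TMF(n)^{h\Gl}$ recalled just before the statement; after $p$-completion this gives $(TMF(n)_p)^{h\Gl}\simeq TMF_p$. The identical torsor argument applies to the smooth locus inside a formal neighborhood of the cusps, where $q$ is still \'etale, so that $(TMF(n)^\infty_p)^{h\Gl}\simeq TMF^\infty_p$. The essential corner is the cuspidal one, where $q$ is ramified of degree $n$ and torsor descent fails; here Proposition \ref{prop:htpyfixed} does exactly the required work, showing that $Tmf^\infty_p\to (Tmf(n)^\infty_p)^{h\Gl}$ is an equivalence, since $Tmf(n)^\infty_p\simeq\Hom_U(\Gl,K[[q^{1/n}]])$ is induced from the subgroup $U$ of upper-triangular matrices and $|U|=n$ is prime to $p$, so that the fixed points collapse onto $K[[q^{1/n}]]^{hU}$.

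Combining the three identifications, the $(-)^{h\Gl}$ of the $Tmf(n)_p$-square is the gluing square computing $Tmf_p$, whence $(Tmf(n)_p)^{h\Gl}\simeq Tmf_p$ for every prime $p>2$ with $p\nmid n$. Rationally $\Gl$ is finite, so $(-)^{h\Gl}$ is the derived invariants, and the same identifications hold: the $U$-invariants of $B(n)_\Q$, on which $U$ acts through $n$-th roots of unity sending $q^{1/n}$ to $\zeta^a q^{1/n}$, recover the level-one power series, and Shapiro's lemma recovers the $\Gl$-invariants of the induced module. Finally, since $Tmf(n)[1/2n]$ is reassembled from its $p$-completions and its rationalization as a homotopy limit (arithmetic fracture), and $(-)^{h\Gl}$ commutes with this homotopy limit, the prime-by-prime and rational equivalences assemble into the desired equivalence $Tmf[1/2n]\to Tmf(n)^{h\Gl}$.

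The main obstacle is precisely the cuspidal corner: the ramification of $q$ at the cusps means that $\Ml{n}\to\M[1/n]$ is genuinely not a torsor there, so one cannot invoke \'etale descent globally, and it is this point that makes the compactified statement nontrivial beyond the open one. What rescues the argument is that the ramification is ``induced'': the cuspidal sections form a $\Gl$-module coinduced from $U$, so Shapiro's lemma together with the vanishing of higher $U$-cohomology with $p$-complete coefficients forces the homotopy fixed points to reduce to the level-one cuspidal sections, exactly as encoded in Proposition \ref{prop:htpyfixed}. The only other points requiring care are bookkeeping ones, namely that $(-)^{h\Gl}$ commutes with the gluing pullback and with the arithmetic fracture square, but these are formal consequences of homotopy fixed points being a homotopy limit.
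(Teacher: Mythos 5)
Your proposal is correct and follows essentially the same route as the paper: the paper's proof likewise observes that the equivalence holds away from the cusps by the torsor/descent argument, holds near the cusps by Proposition \ref{prop:htpyfixed}, and then concludes because $Tmf(n)$ is assembled from these pieces by pullback squares with which homotopy fixed points commute. Your write-up merely makes explicit the bookkeeping (the three corners of the gluing square and the arithmetic fracture step) that the paper leaves implicit.
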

\begin{proof}
This is true away from the cusps, but by Proposition \ref{prop:htpyfixed}, it is also true near the cusps. We constructed $ Tmf(n) $ from these two via pullback diagrams, and homotopy fixed points commute with pullbacks.
\end{proof}

\begin{rem}
For the rest of this paper, we will investigate $ Tmf(2) $, the spectrum of topological modular forms with level $ 2 $ structure. Note that this spectrum differs from the more commonly encountered $ TMF_0(2) $, which is the receptacle for the Ochanine genus \cite{\Ochanine}, as well as the spectrum appearing in the resolution of the $ K(2) $-local sphere \cite{\GHMR,\Behrens}. The latter is obtained by considering isogenies of degree $ 2 $ on elliptic curves, so-called $ \Gamma_0(2) $ structures.
\end{rem}

\section{Level $ 2 $ Structures Made Explicit}\label{sec:leveltwo}

In this section we find an explicit presentation of the moduli stack $ \Ml{2} $.

Let $ E/S $ be a generalized elliptic curve over a scheme on which $ 2 $ is invertible, and whose geometric fibers are either smooth or have a nodal singularity (i.e. are N\'eron $ 1 $-gons). Then Zariski locally, $ E $ is isomorphic to a Weierstrass curve of a specific and particularly simple form. Explicitly, there is a cover $ U\to S $ and functions $ x,y $ on $ U $ such that the map $U\to\P^2_U $ given by $ [x,y,1] $ is an isomorphism between $ E_U=E\cross{S}U $ and a Weierstrass curve in $ \P^2_U $ given by the equation
\begin{align}\label{Eb}
E_b:\quad y^2=x^3+\frac{b_2}{4}x^2+\frac{b_4}{2}x+\frac{b_6}{4}=:f_b(x),
\end{align}
such that the identity  for the group structure on $ E_U $ is mapped to the point at infinity $ [0,1,0] $\cite[III.3]{\Silverman},\cite{\KatzMazur}. Any two Weierstrass equations for $ E_U $ are related by a linear change of variables of the form
\begin{equation}\label{transf}
\begin{split}
x&\mapsto u^{-2}x+r\\
y&\mapsto u^{-3}y.
\end{split}
\end{equation}

The object which classifies locally Weierstrass curves of the form \eqref{Eb}, together with isomorphisms which are given as linear change of variables \eqref{transf}, is a stack $ \M_{weier}[1/2] $, and the above assignment $ E\mapsto E_b $ of a locally Weierstrass curve to an elliptic curve defines a map $ w: \M[1/2]\to\M_{weier}[1/2] $. 

The Weierstrass curve \eqref{Eb} associated to a generalized elliptic curve $ E $ over an algebraically closed field has the following properties: $ E $ is smooth if and only if the discriminant of $ f_b(x) $ has no repeated roots, and $ E $ has a nodal singularity if and only if $ f_b(x) $ has a repeated but not a triple root. Moreover, non-isomorphic elliptic curves cannot have isomorphic Weierstrass presentations. Thus the map $ w: \M[1/2]\to\M_{weier}[1/2] $ injects $ \M[1/2] $ into the open substack $ U(\Delta) $ of $ \M_{weier}[1/2] $ which is the locus where the discriminant of $ f_b $ has order of vanishing at most one.

Conversely, any Weierstrass curve of the form \eqref{Eb} has genus one, is smooth if and only if $ f_b(x) $ has no repeated roots, and has a nodal singularity whenever it has a double root, so $ w:\M[1/2]\to U(\Delta) $ is also surjective, hence an isomorphism. Using this and the fact that points of order two on an elliptic curve are well understood, we will find a fairly simple presentation of $ \Ml{2} $.

The moduli stack of locally Weierstrass curves is represented by the Hopf algebroid \[ (B=\Z[1/2][b_2,b_4,b_6],B[u^{\pm 1},r]).\] Explicitly, there is a presentation $ \Spec B\to \M_{weier}[1/2]$, such that \[ \Spec B \cross{\M_{weier}[1/2]}\Spec B=\Spec B[u^{\pm 1},r]. \]
The projection maps to $ \Spec B $ are $ \Spec $ of the inclusion of $ B $ in $ B[u^{\pm 1}] $ and $ \Spec $ of the map
\begin{align*}
b_2&\mapsto u^2(b_2+12r)\\
b_4&\mapsto u^4(b_4+rb_2+6r^2)\\
b_6&\mapsto u^6(b_6+2rb_4+r^2b_2+4r^3)
\end{align*}
which is obtained by plugging in the transformation \eqref{transf} into \eqref{Eb}. In other words, $ \M_{weier}[1/2] $ is simply obtained from $ \Spec B $ by enforcing the isomorphisms that come from the change of variables \eqref{transf}.

Suppose $ E/S $ is a smooth elliptic curve which is given locally as a Weierstrass curve \eqref{Eb}, and let $\phi:(\Z/2)^2\to E$ be a level $ 2 $ structure. For convenience in the notation, define $e_0=\binom{1}{1},e_1=\binom{1}{0},e_2=\binom{0}{1}\in(\Z/2)^2$. Then $\phi(e_i)$ are all points of exact order $2$ on $E$, thus have $ y $-coordinate equal to zero since $ [-1](y,x)=(-y,x) $ (\cite[III.2]{\Silverman}) and \eqref{Eb} becomes
\begin{align}\label{Ex}
y^2=(x-x_0)(x-x_1)(x-x_2),
\end{align}
where $x_i=x(\phi(e_i))$ are all different.

If $ E $ is a generalized elliptic curve which is singular, i.e. $ E $ is a N\'eron $ 2 $-gon, then a choice of level $ 2 $ structure makes $ E $ locally isomorphic to the blow-up of \eqref{Ex} at the singularity (seen as a point of $ \P^2 $), with $ x_i=x_j\neq x_k $, for $ \{i,j,k\}=\{0,1,2\} $.

So let $A=\Z[1/2][x_0,x_1,x_2]$, let $ L $ be the line in $ \Spec A $ defined by  the ideal $(x_0-x_1,x_1-x_2,x_2-x_0)$, and let $\Spec A-L$ be the open complement. The change of variables \eqref{transf} translates to a $ (\G_a\rtimes\G_m) $-action on $ \Spec A $ that preserves $ L $ and is given by:
\begin{align*}
x_i&\mapsto u^2(x_i-r).
\end{align*}
Consider the isomorphism $\psi:(\Spec A-L)\xrightarrow{\sim}(\A^2-\{0\})\times\A^1$:
\[
(x_0,x_1,x_2)\mapsto\left( (x_1-x_0,x_2-x_0),x_0\right).
\]
We see that $\G_a$ acts trivially on the $(\A^2-\{0\})$-factor, and freely by translation on $\A^1$. Therefore the quotient $(\Spec A-L)//\G_a$ is \[\Mc2=\A^2-\{0\}=\Spec\Z[1/2][\lambda_1,\lambda_2]-\{0\},\]the quotient map being $\psi$ composed with the projection onto the first factor. This corresponds to choosing coordinates in which $E$ is of the form:
\begin{equation}\label{Elambda}
y^2=x(x-\lambda_1)(x-\lambda_2).
\end{equation}
The $ \G_m $-action is given by grading $ A $ as well as $ \Lambda=\Z[1/2][\lambda_1,\lambda_2] $ so that the degree of each $ x_i $ and $ \lambda_i $ is $ 2 $. It follows that $\Ml{2}=\Mc2//\G_m$ is the weighted projective line $\Proj\Lambda=(\Spec\Lambda-\{0\})//\G_m$. Note that we are taking homotopy quotient which makes a difference: $ -1 $ is a non-trivial automorphism on $ \Ml{2} $ of order $ 2 $.

The sheaf $ \omega_{\Ml{2}} $ is an ample invertible line bundle on $ \Ml{2} $, locally generated by the invariant differential $\displaystyle{\eta_{E_\lambda}= \frac{dx}{2y} }$. From \eqref{transf} we see that the $ \G_m=\Spec\Z[u^{\pm 1}] $-action changes $ \eta_{E_\lambda}$ to $ u\eta_{E_\lambda} $. Hence, $ \omega_{\Ml{2}} $ is the line bundle on $ \Ml{2}=\Proj\Lambda $ which corresponds to the shifted module $ \Lambda[1] $, standardly denoted by $ \Sh{O}(1) $. 
We summarize the result.
\begin{proposition}\label{M2stack}
The moduli stack of generalized elliptic curves with a choice of a level $ 2 $ structure $ \Ml{2}$ is isomorphic to $\Proj\Lambda=(\Spec\Lambda-\{0\})//\G_m $, via the map $ \Ml{2} \to\Proj\Lambda$ which classifies the sheaf of invariant differentials $ \omega_{\Ml{2}} $on $ \Ml{2} $. 
The universal curve over the locus of smooth curves $ \Ml{2}^0 =\Proj\Lambda-\{0,1,\infty\}$ is the curve of equation \eqref{Elambda}. The fibers at $ 0$, $1$, and $\infty $, are N\'eron $ 2 $-gons obtained by blowing up the singularity of the curve \eqref{Elambda}.
\end{proposition}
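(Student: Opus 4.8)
The plan is to assemble the reductions already made above into the three assertions, starting from the smooth locus. Since the three nonzero elements $e_0,e_1,e_2$ of $(\Z/2)^2$ are exactly the elements of exact order two, a level $2$ structure $\phi$ on a smooth curve presented as \eqref{Eb} is the same datum as an ordering of the three nontrivial $2$-torsion points, hence (by the computation producing \eqref{Ex}) an ordering $x_0,x_1,x_2$ of the three distinct roots of the monic cubic $f_b$. Because the ordering is fixed, the only isomorphisms between two such presentations are the substitutions \eqref{transf}, which act on the roots through $\G_a\rtimes\G_m$ by $x_i\mapsto u^2(x_i-r)$; there is no residual permutation of the roots, as such permutations would implement the $GL_2(\Z/2)\cong S_3$ action and would change the level structure. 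Thus I would first identify the moduli of smooth curves with level $2$ structure with the quotient stack of $\Spec A$ minus its discriminant locus by this $\G_a\rtimes\G_m$-action.

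Next I would perform the two quotients in turn. For $\G_a$, the map $\psi$ exhibits $\Spec A-L$ as a trivial $\G_a$-torsor over $\A^2-\{0\}=\Spec\Lambda-\{0\}$, with $\G_a$-invariant coordinates $\lambda_1=x_1-x_0$, $\lambda_2=x_2-x_0$ and free translation on $x_0$; normalizing $x_0$ to $0$ produces the equation \eqref{Elambda}, and the quotient is $\Mc2$. Quotienting the remaining $\G_m$-action, which scales each $\lambda_i$ with weight $2$, then yields $\Proj\Lambda$. Here it is essential that the quotient is stacky: the subgroup $\mu_2\subset\G_m$ acts trivially on the $\lambda_i$ but sends $y\mapsto u^{-3}y=-y$, so it survives as a generic automorphism, which is precisely the elliptic involution $[-1]$. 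Finally, \eqref{transf} scales $dx/2y$ by $u$, i.e. this differential has weight $1$; this identifies $\omega_{\Ml{2}}$ with $\Sh{O}(1)$ and shows the isomorphism $\Ml{2}\to\Proj\Lambda$ is the one classified by $\omega_{\Ml{2}}$.

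What remains, and where I expect the only real work to lie, is the boundary. The curve \eqref{Elambda} degenerates exactly when two of the roots $0,\lambda_1,\lambda_2$ collide, i.e. along $\lambda_1=0$, $\lambda_2=0$, and $\lambda_1=\lambda_2$, giving in $\Proj\Lambda$ the three points $[0:1]$, $[1:0]$, $[1:1]$, which we label $0,1,\infty$; their complement is $\Ml{2}^0$. The hard part is to verify that the singular curves with level $2$ structure — the N\'eron $2$-gons — are classified by the blow-ups of \eqref{Elambda} at the double-root singularity, and that adjoining these is exactly the completion realized by the three points of $\Proj\Lambda$ rather than some other modular compactification. For this I would invoke the modular description of $\Ml{2}$ from Section \ref{sec:moduli} together with the fact, recorded at the start of this section, that the Weierstrass form \eqref{Eb} and the change of variables \eqref{transf} remain valid for curves with a nodal fibre; checking that the root collisions match the three Néron $2$-gon fibres and that \eqref{transf} extends across the boundary is the one step not already contained in the preceding computation.
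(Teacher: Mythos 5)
Your proposal is correct and follows essentially the same route as the paper: Weierstrass form \eqref{Eb}, level $2$ structure as an ordering of the roots of $f_b$, the two-step quotient of $\Spec A-L$ by $\G_a$ and then $\G_m$ via $\psi$, the weight-$1$ identification $\omega_{\Ml{2}}\cong\Sh{O}(1)$, and the boundary handled by the blow-up description of N\'eron $2$-gons over nodal Weierstrass cubics. The boundary step you flag as remaining is treated in the paper at the same level of detail (it is asserted, with the discriminant/$2$-gon dictionary built into working over $\Spec A-L$ rather than the complement of the full discriminant locus), so your sketch matches the paper's argument.
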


\begin{rem}\label{M2gr}
As specifying a $ \G_m $-action is the same as specifying a grading, we can think of the ringed space $ (\Ml{2},\om{\Ml{2}}{*}) $ as the ringed space $ (\Mc2=\Spec\Lambda-\{0\},\SSh{\Mc2}) $ together with the induced grading.
\end{rem}

Next we proceed to understand the action of $ GL_2(\Z/2) $ on the global sections $ H^0(\Ml{2},\om{\Ml{2}}{*})=\Lambda $. By definition, the action comes from the natural action of $ GL_2(\Z/2) $ on $ (\Z/2)^2 $ and hence by pre-composition on the level structure maps $ \phi:(\Z/2)^2\to E[2] $. If we think of $ GL_2(\Z/2) $ as the symmetric group $ S_3 $, then this action permutes the non-zero elements $ \{e_0,e_1,e_2\} $ of $ (\Z/2)^2 $, which translates to the action on
\[
H^0(\Spec A-L,\SSh{\Spec A-L})=\Z[x_0,x_1,x_2],
\]
given as  $g\cdot x_i=x_{gi}$ where $g\in S_3=\Perm\{0,1,2\}$. The map on $H^0$ induced by the projection $(\Spec A-L)\to \Mc2$ is
\begin{align*}
\Z[\lambda_1,\lambda_2]&\to\Z[x_0,x_1,x_2]\\
\lambda_i&\mapsto x_i-x_0
\end{align*}
Therefore, we obtain that $g\lambda_i$ is the inverse image of $x_{gi}-x_{g0}$. That is, $g\lambda_i=\lambda_{gi}-\lambda_{g0}$, where we implicitly understand that $\lambda_0=0$. We have proven the following lemma.

\begin{lemma}\label{ActionLambda}
Choose the generators of $S_3=\Perm\{0,1,2\}$, $\sigma=(012)$ and $\tau=(12)$. Then the $ S_3 $ action on $ \Lambda=H^0(\Ml{2},\om{\Ml{2}}{*}) $ is determined by
\begin{equation*}
\begin{aligned}
\tau:\quad&\lambda_1\mapsto\lambda_2 &\sigma:\quad&\lambda_1\mapsto\lambda_2-\lambda_1\\
&\lambda_2\mapsto\lambda_1 & &\lambda_2\mapsto-\lambda_1.
\end{aligned}
\end{equation*}
\end{lemma}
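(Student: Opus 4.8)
The plan is to derive one uniform formula for the action of an arbitrary permutation and then specialize to the two chosen generators. First I would record that the map on global sections induced by the projection $(\Spec A - L) \to \Mc2$, namely $\Lambda = \Z[1/2][\lambda_1,\lambda_2] \to \Z[x_0,x_1,x_2]$ with $\lambda_i \mapsto x_i - x_0$, is injective: its image is the subring generated by $x_1 - x_0$ and $x_2 - x_0$, and these two linear forms are algebraically independent over $\Z[1/2]$ (together with $x_0$ they form a new polynomial coordinate system). Consequently, to determine $g\lambda_i$ it suffices to compute its image in $\Z[x_0,x_1,x_2]$.

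Since the $S_3 = \Perm\{0,1,2\}$-action on $\Z[x_0,x_1,x_2]$ is $g \cdot x_i = x_{gi}$ and the projection is equivariant (the $\G_a$-translation being quotiented out is diagonal, hence commutes with the permutation action), the image of $g\lambda_i$ is $g\cdot(x_i - x_0) = x_{gi} - x_{g0}$. Introducing the convention $\lambda_0 = 0$, so that $x_j - x_0 = \lambda_j$ for every $j \in \{0,1,2\}$, I would rewrite $x_{gi} - x_{g0} = (x_{gi}-x_0) - (x_{g0}-x_0) = \lambda_{gi} - \lambda_{g0}$. By injectivity this yields the closed formula $g\lambda_i = \lambda_{gi} - \lambda_{g0}$.

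It then remains only to substitute the generators $\sigma = (012)$ and $\tau = (12)$. For $\tau$ one has $\tau 0 = 0$, $\tau 1 = 2$, $\tau 2 = 1$, so $\tau\lambda_1 = \lambda_2 - \lambda_0 = \lambda_2$ and $\tau\lambda_2 = \lambda_1 - \lambda_0 = \lambda_1$; for $\sigma$ one has $\sigma 0 = 1$, $\sigma 1 = 2$, $\sigma 2 = 0$, so $\sigma\lambda_1 = \lambda_2 - \lambda_1$ and $\sigma\lambda_2 = \lambda_0 - \lambda_1 = -\lambda_1$. These are precisely the four formulas in the statement.

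There is no genuine analytic obstacle here; the content is entirely bookkeeping, and the derivation of the formula $g\lambda_i = \lambda_{gi} - \lambda_{g0}$ has effectively been carried out in the paragraph preceding the lemma. The one point that deserves care—and the only place an error could creep in—is the base-point convention: one must remember that the chosen base point $x_0$ (equivalently $\lambda_0 = 0$) is itself moved by $g$, which is exactly why the second index $g0$ appears in the formula and why $\sigma\lambda_2$ acquires a sign.
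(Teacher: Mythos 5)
Your proposal is correct and follows essentially the same route as the paper: the paper likewise passes to $\Z[x_0,x_1,x_2]$ with the permutation action $g\cdot x_i = x_{gi}$, uses the map $\lambda_i\mapsto x_i-x_0$ to deduce $g\lambda_i=\lambda_{gi}-\lambda_{g0}$ with the convention $\lambda_0=0$, and then evaluates on the generators. Your explicit remarks on injectivity of $\Lambda\to\Z[x_0,x_1,x_2]$ and on the equivariance of the projection merely spell out steps the paper leaves implicit.
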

This fully describes the global sections $H^0(\Ml{2},\om{\Ml{2}}{*})$ as an $S_3$-module. The action on $H^1(\Ml{2},\om{\Ml{2}}{*})$ is not as apparent and we deal with it using Serre duality \eqref{GSDuality}.

\section{(Equivariant) Serre Duality for $ \Ml{2} $}

We will proceed to prove Serre duality for $ \Ml{2} $ in an explicit manner that will be useful later, by following the standard computations for projective spaces, as in \cite{\HartshorneAG}. To emphasize the analogy with the corresponding statements about the usual projective line, we might write $ \Proj\Lambda $ and $ \Sh{O}(*) $ instead of $ \Ml{2} $ and $ \om{\Ml{2}}{*} $, in view of Remark \eqref{M2gr}. Also remember that for brevity, we might omit writing $ 1/2 $.

\begin{proposition}\label{M2cohomology}
The cohomology of $ \Ml{2} $ with coefficients in the graded sheaf of invariant differentials $ \om{\Ml{2}}{*} $ is computed as
\begin{align*}
H^s(\Ml{2},\om{\Ml{2}}{*})=\begin{cases}
\Lambda,&s=0\\
\Lambda/(\lambda_1^\infty,\lambda_2^\infty),&s=1\\
0,&\text{else}
\end{cases}
\end{align*}
where $ \Lambda/(\lambda_1^\infty,\lambda_2^\infty) $ is a torsion $ \Lambda $-module with a $ \Z[1/2] $-basis of monomials $ \frac{1}{\lambda_1^{i}\lambda_2^j} $ for $ i,j $ both positive. 
\end{proposition}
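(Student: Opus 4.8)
The plan is to compute the cohomology of the weighted projective line $ \Ml{2}=\Proj\Lambda $ by mimicking the standard \v{C}ech computation for $ \P^1 $ found in \cite[III.5]{\HartshorneAG}, adapting it to the graded/stacky setting via Remark \ref{M2gr}. Since $ \Ml{2}\cong(\Spec\Lambda-\{0\})//\G_m $ with $ \Lambda=\Z[1/2][\lambda_1,\lambda_2] $ and the sheaf $ \om{\Ml{2}}{*} $ corresponds to $ \Lambda $ with its internal grading (where $ \lambda_i $ has degree $ 2 $, and $ \om{\Ml{2}}{t}\cong\Sh{O}(t)\leftrightarrow\Lambda[t] $), the cohomology of the graded sheaf is the $ \G_m $-equivariant (i.e.\ graded) cohomology of the quasi-affine scheme $ \Spec\Lambda-\{0\} $. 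Concretely, I would cover $ \Spec\Lambda-\{0\} $ by the two basic opens $ D(\lambda_1) $ and $ D(\lambda_2) $, whose intersection is $ D(\lambda_1\lambda_2) $, and form the two-term \v{C}ech complex of graded $ \Lambda $-modules
\begin{align}\label{cechcx}
\Lambda[\lambda_1^{-1}]\oplus\Lambda[\lambda_2^{-1}]\xrightarrow{\ \delta\ }\Lambda[\lambda_1^{-1},\lambda_2^{-1}],
\end{align}
with $ \delta $ the difference of the two localization maps.

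Next I would read off the two cohomology groups directly from \eqref{cechcx}. The kernel of $ \delta $ is the set of pairs of fractions agreeing after further localization, which forces both to lie in $ \Lambda $ itself; this gives $ H^0=\Lambda $, recovering the global sections already identified in Lemma \ref{ActionLambda}. The cokernel of $ \delta $ is spanned by those monomials in $ \Lambda[\lambda_1^{-1},\lambda_2^{-1}] $ that are not hit from either summand, namely the monomials $ \lambda_1^{-i}\lambda_2^{-j} $ with $ i,j\geq 1 $ (a monomial with a nonnegative exponent on $ \lambda_1 $ or on $ \lambda_2 $ lies in the image of the respective localization summand). This is exactly the $ \Z[1/2] $-span of $ \frac{1}{\lambda_1^{i}\lambda_2^{j}} $ for $ i,j $ both positive, i.e.\ the claimed module $ \Lambda/(\lambda_1^\infty,\lambda_2^\infty) $, giving $ H^1 $. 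Since the cover has only two opens, the \v{C}ech complex vanishes in degrees $ \geq 2 $, so all higher cohomology is zero, and because $ D(\lambda_i) $ are affine (and localization is exact on quasi-coherent sheaves) the \v{C}ech cohomology agrees with sheaf cohomology.

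The two points needing genuine care, rather than the monomial bookkeeping, are the following. First, I must justify that the \v{C}ech cohomology for this particular open cover computes the honest sheaf (stack) cohomology $ H^s(\Ml{2},-) $; this is the standard Leray/acyclicity argument (each $ D(\lambda_i) $ and their intersection are affine, hence acyclic for the quasi-coherent sheaf $ \om{\Ml{2}}{*} $), transported through the grading-vs-$ \G_m $ dictionary of Remark \ref{M2gr}. Second, and this is the subtlety specific to the stack rather than the scheme, I should confirm that passing to the homotopy quotient by $ \G_m $ introduces no extra cohomology: $ \G_m $ is linearly reductive over $ \Z[1/2] $ (indeed over any base), so taking $ \G_m $-invariants is exact and the equivariant cohomology is just the graded piece of the ordinary cohomology, with no higher group-cohomology contribution. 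I expect this verification that the stacky structure contributes nothing beyond the grading to be the main conceptual obstacle, whereas identifying the kernel and cokernel of $ \delta $ is routine. Once $ H^1 $ is identified as the torsion module above, its $ S_3 $-module structure is then accessible through the Serre duality pairing \eqref{GSDuality}, as anticipated in the remark following Lemma \ref{ActionLambda}.
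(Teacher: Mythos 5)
Your proposal is correct and is essentially the paper's own computation in slightly different packaging: the paper computes the local cohomology $R\Gamma_{\{0\}}(\Spec\Lambda,\Sh{O})$ via the Koszul complex $\Lambda\to\Lambda[\lambda_1^{-1}]\times\Lambda[\lambda_2^{-1}]\to\Lambda[\lambda_1^{-1}\lambda_2^{-1}]$ and then applies the local cohomology long exact sequence, whereas you drop the augmentation and read off kernel and cokernel of the same map $\delta$ as the \v{C}ech cohomology of the two-element affine cover of $\Spec\Lambda-\{0\}$ — the identification of $H^0=\Lambda$ and $H^1=\Lambda/(\lambda_1^\infty,\lambda_2^\infty)$ is identical in both. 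Your two points of care (Leray acyclicity for the affine cover, and exactness of passing to graded pieces because $\G_m$ is linearly reductive) are exactly what the paper's Remark \ref{M2gr} is implicitly invoking, so nothing is missing.
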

\begin{rem}
The module $ \Lambda/(\lambda_1^\infty,\lambda_2^\infty) $ is inductively defined by the short exact sequences
\begin{align*}
0\to & \Lambda \xrightarrow{\lambda_1} \Lambda\Bigl[\frac{1}{\lambda_1}\Bigr]\to \Lambda/(\lambda_1^\infty)\to 0\\
0\to &\Lambda/(\lambda_1^\infty) \xrightarrow{\lambda_2} \Lambda\Bigl[\frac{1}{\lambda_1\lambda_2}\Bigr]\to \Lambda/(\lambda_1^\infty,\lambda_2^\infty)\to 0.
\end{align*}
\end{rem}
\begin{rem}
Note that according to \ref{M2gr}, $ H^*(\Ml{2},\om{\Ml{2}}{*})$ is isomorphic to $ H ^ * ( \Mc2, \SSh { \Mc2 } ) $ with the induced grading. It is these latter cohomology groups that we shall compute.
\end{rem}

\begin{proof}
We proceed using the local cohomology long exact sequence \cite[Ch III, ex. 2.3]{\HartshorneAG} for \[ \Mc2\subset \Spec\Lambda \supset \{0\}.\]
The local cohomology groups $ R^*\Gamma_{\{0\}}(\Spec\Lambda,\Sh{O})$ are computed via a Koszul complex as follows. The ideal of definition for the point $ \{0\} \in\Spec\Lambda$ is $ (\lambda_1,\lambda_2) $, and the generators $ \lambda_i $ form a regular sequence. Hence, $ R^*\Gamma_{\{0\}}(\Spec\Lambda,\Sh{O}) $ is the cohomology of the Koszul complex
\begin{align*}
\Lambda \to \Lambda\Bigl[\frac{1}{\lambda_1}\Bigr]\times \Lambda\Bigl[\frac{1}{\lambda_2}\Bigr]\to \Lambda\Bigl[\frac{1}{\lambda_1\lambda_2}\Bigr],
\end{align*}
which is $ \Lambda/(\lambda_1^\infty,\lambda_2^\infty) $, concentrated in (cohomological) degree two. We also know that $ H^*(\Spec\Lambda,\Sh{O})=\Lambda $ concentrated in degree zero, so that the local cohomology long exact sequence splits into
\begin{align*}
0&\to\Lambda\to H^0(\Mc2,\Sh{O})\to 0\\
0&\to H^1(\Mc2,\Sh{O})\to \Lambda/(\lambda_1^\infty,\lambda_2^\infty)\to 0,
\end{align*}
giving the result.
\end{proof}

\begin{lemma}\label{M2Omega}
We have the following properties of the sheaf of differentials on $ \Ml{2} $.
\begin{enumerate}
\item[(a)] There is an isomorphism $ \Om{\Ml{2}}\cong\om{\Ml{2}}{-4} $.
\item[(b)] The cohomology group $ H^s(\Ml{2},\Om{\Ml{2}}) $ is zero unless $ s=1 $, and\\ $ H^1(\Ml{2},\Om{\Ml{2}})$ is the sign representation $ \Z_{\sgn}[1/2] $ of $ S_3 $.
\end{enumerate}
\end{lemma}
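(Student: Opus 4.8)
The plan is to exploit the presentation $\Ml{2}=\Proj\Lambda$ of Proposition \ref{M2stack}, viewing it as the quotient of the $\G_m$-torsor $p\colon\Mc2\to\Ml{2}$ with $\Mc2=\Spec\Lambda-\{0\}$ (Remark \ref{M2gr}), and to deduce both parts from the Euler sequence of $p$ together with the cohomology computed in Proposition \ref{M2cohomology}. For (a) I would write the relative cotangent sequence of the torsor,
\[ 0\to p^*\Om{\Ml{2}}\to \Om{\Mc2}\xrightarrow{\iota_E}\SSh{\Mc2}\to 0, \]
where the right-hand map is contraction against the fundamental (Euler) vector field $E=2\lambda_1\partial_{\lambda_1}+2\lambda_2\partial_{\lambda_2}$ of the $\G_m$-action, so that $d\lambda_i\mapsto 2\lambda_i$. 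Since $\lambda_1,\lambda_2$ generate the unit ideal on $\Mc2$, the map $\iota_E$ is surjective there, and $\Om{\Mc2}=\SSh{\Mc2}\,d\lambda_1\oplus\SSh{\Mc2}\,d\lambda_2$ is free on two generators each of internal degree $2$. Taking determinants of this sequence identifies $p^*\Om{\Ml{2}}$ with $\det\Om{\Mc2}$, which is free on $d\lambda_1\wedge d\lambda_2$ in internal degree $4$; descending along $p$ this is $\om{\Ml{2}}{-4}=\Sh{O}(-4)$. (Equivalently, this is the canonical bundle formula $\omega_{\P(2,2)}=\Sh{O}(-2-2)$ for the weighted projective line.)

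For the rank statement in (b) I would feed the isomorphism of (a) into Proposition \ref{M2cohomology}: the group $H^s(\Ml{2},\Om{\Ml{2}})$ is the internal-degree $(-4)$ summand of $H^s(\Ml{2},\om{\Ml{2}}{*})$. In internal degree $-4$ the module $\Lambda$ vanishes, so $H^0=0$; the degree $(-4)$ part of $\Lambda/(\lambda_1^\infty,\lambda_2^\infty)$ is spanned by the single monomial $\frac{1}{\lambda_1\lambda_2}$, giving $H^1\cong\Z[1/2]$; and all higher groups vanish since $\Ml{2}$ is one-dimensional.

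The real content, and the step I expect to be delicate, is pinning down the $S_3$-action on this one-dimensional $H^1$. The subtlety is that the isomorphism of (a) is only $S_3$-equivariant up to a character. The point is that $\iota_E$ is $S_3$-equivariant (the $S_3$-action of Lemma \ref{ActionLambda} is by graded automorphisms and hence commutes with the Euler field), and the kernel of $\iota_E$ is generated by $\lambda_2\,d\lambda_1-\lambda_1\,d\lambda_2=-\tfrac12\,\iota_E(d\lambda_1\wedge d\lambda_2)$; therefore this generator, and with it $\Om{\Ml{2}}$, transforms exactly like the top form $d\lambda_1\wedge d\lambda_2$, i.e.\ by the determinant of the $S_3$-action on $\langle\lambda_1,\lambda_2\rangle$. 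From Lemma \ref{ActionLambda} that determinant sends the transposition $\tau$ to $-1$ and the three-cycle $\sigma$ to $+1$, which is precisely the sign character.

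To make this rigorous at the level of cohomology I would run the local-cohomology computation of Proposition \ref{M2cohomology} on the free sheaf $\Omega^2_{\Mc2}=\Lambda\cdot(d\lambda_1\wedge d\lambda_2)\cong p^*\Om{\Ml{2}}$, so that the generator of $H^1(\Ml{2},\Om{\Ml{2}})$ is represented by $\frac{d\lambda_1\wedge d\lambda_2}{\lambda_1\lambda_2}$, and then evaluate $\tau$ directly:
\[ \tau\cdot\frac{d\lambda_1\wedge d\lambda_2}{\lambda_1\lambda_2}=\frac{d\lambda_2\wedge d\lambda_1}{\lambda_2\lambda_1}=-\frac{d\lambda_1\wedge d\lambda_2}{\lambda_1\lambda_2}. \]
Since any one-dimensional representation of $S_3$ over $\Z[1/2]$ is either trivial or sign and is completely determined by its value on a transposition, this single sign computation forces $H^1(\Ml{2},\Om{\Ml{2}})\cong\Z_{\sgn}[1/2]$, the value $+1$ on $\sigma$ being automatic. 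The one genuine obstacle is that $\sigma$ does not preserve the chart cover $\{D(\lambda_1),D(\lambda_2)\}$ used to present the local cohomology, so a head-on evaluation of $\sigma$ on the generator would require a change-of-cover argument; the observation that a transposition already separates the trivial and sign characters lets me avoid that computation entirely.
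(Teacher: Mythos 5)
Your proposal is correct and follows essentially the same route as the paper: the isomorphism $\Om{\Ml{2}}\cong\Sh{O}(-4)$ is obtained from the degree-four generator $\lambda_1\,d\lambda_2-\lambda_2\,d\lambda_1$ (which the paper simply exhibits as a nowhere-vanishing section of $\Sh{O}(4)\otimes\Om{\Ml{2}}$, while you derive it from the Euler sequence), the cohomology is read off from Proposition \ref{M2cohomology} in internal degree $-4$, and the $S_3$-action on the one-dimensional $H^1$ is identified with the determinant character of the action on $\langle\lambda_1,\lambda_2\rangle$. Your substitute for the paper's blanket statement that projective transformations act on $H^1(\Proj\Lambda,\Omega)$ by their determinant --- evaluating only the transposition $\tau$ on the representative $\frac{d\lambda_1\wedge d\lambda_2}{\lambda_1\lambda_2}$ and invoking the classification of rank-one $S_3$-representations over $\Z[1/2]$ to avoid the change-of-cover issue for $\sigma$ --- is a sound and slightly more careful presentational variant, not a different proof.
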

\begin{proof} 
\begin{enumerate}
\item[(a)] The differential form $\eta= \lambda_1d\lambda_2-\lambda_2d\lambda_1 $ is a nowhere vanishing differential form of degree four, thus a trivializing global section of the sheaf $\Sh{O}(4)\otimes\Om{\Proj\Lambda} $. Hence there is an isomorphism $ \Om{\Proj\Lambda} \cong\Sh{O}(-4)$.
\item[(b)]  From Proposition \ref{M2cohomology}, $ H^*(\Proj\Lambda,\Omega)$ is $\Z[1/2] $ concentrated in cohomological degree one, and generated by \[ \frac{\eta}{\lambda_1\lambda_2}=\frac{\lambda_1}{\lambda_2}d\Bigl(\frac{\lambda_2}{\lambda_1}\Bigr). \]
Any projective transformation $ \varphi $ of $ \Proj\Lambda $ acts on $ H^1(\Proj\Lambda,\Omega) $ by the determinant $ \det\varphi $. By our previous computations, as summarized in Lemma \ref{ActionLambda}, the transpositions of $ S_3 $ act with determinant $ -1 $, and the elements of order $ 3 $ of $ S_3 $ with determinant $ 1 $. Hence the claim.
\end{enumerate}
\end{proof}

We are now ready to state and prove the following result.

\begin{thm}[Serre Duality]\label{GSDuality}
The sheaf of differentials $ \Om{\Ml{2}} $ is a dualizing sheaf on $ \Ml{2} $, i.e. the natural cup product map \[ H^0(\Ml{2},\om{\Ml{2}}{t})\otimes H^1(\Ml{2},\om{\Ml{2}}{-t}\otimes\Om{\Ml{2}})\to H^1(\Ml{2},\Om{\Ml{2}}), \]
is a perfect pairing which is compatible with the $ S_3 $-action.
\end{thm}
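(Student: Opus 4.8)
The plan is to follow the classical Serre duality computation for $\P^1$ as in \cite{\HartshorneAG}, but carried out explicitly on $\Ml{2}=\Proj\Lambda$ so that the $S_3$-equivariance can be tracked at every stage. The target $H^1(\Ml{2},\Om{\Ml{2}})$ has already been identified in Lemma \ref{M2Omega}(b) as the one-dimensional sign representation $\Z_{\sgn}[1/2]$, generated by the class $\eta/(\lambda_1\lambda_2)$, and by part (a) we may replace $\Om{\Ml{2}}$ by $\om{\Ml{2}}{-4}$ throughout. So the pairing to be understood is
\[
H^0(\Ml{2},\om{\Ml{2}}{t})\otimes H^1(\Ml{2},\om{\Ml{2}}{-t-4})\to H^1(\Ml{2},\om{\Ml{2}}{-4}).
\]
First I would write both sides down completely using Proposition \ref{M2cohomology}: $H^0$ in degree $t$ is the degree-$t$ part of $\Lambda$, spanned by monomials $\lambda_1^{a}\lambda_2^{b}$ with $a+b$ of the right weight, while $H^1$ in degree $-t-4$ is the degree-$(-t-4)$ part of $\Lambda/(\lambda_1^\infty,\lambda_2^\infty)$, spanned by monomials $\lambda_1^{-i}\lambda_2^{-j}$ with $i,j\geq 1$. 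The cup product is just multiplication of these Laurent monomials landing in $\Lambda[\tfrac{1}{\lambda_1\lambda_2}]$, followed by projecting onto the class of $1/(\lambda_1\lambda_2)$ (all other monomials in the torsion module either vanish or are the images of classes killed in the local cohomology computation).

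The core of the argument is then the perfectness of this pairing, which I would prove monomial-by-monomial: the basis monomial $\lambda_1^{a}\lambda_2^{b}\in H^0$ pairs nontrivially with the single basis monomial $\lambda_1^{-a-1}\lambda_2^{-b-1}\in H^1$ (whose product is exactly $1/(\lambda_1\lambda_2)$, up to sign and the identification in the local cohomology Koszul complex) and pairs to zero with every other basis monomial, since any other product is a Laurent monomial that represents zero in $\Lambda/(\lambda_1^\infty,\lambda_2^\infty)$. This exhibits an explicit dual basis and shows the pairing is a perfect duality between finitely generated free $\Z[1/2]$-modules in each internal degree.

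What remains, and what I expect to be the genuinely substantive step, is the $S_3$-compatibility. The cup product is natural, so $S_3$-equivariance of the pairing is formal once one knows how $S_3$ acts on all three groups; the content is that the pairing is compatible with the \emph{given} actions. The action on $H^0=\Lambda$ is recorded explicitly in Lemma \ref{ActionLambda}, and the action on the target $H^1(\Ml{2},\om{\Ml{2}}{-4})$ is the sign representation by Lemma \ref{M2Omega}(b). The plan is to use naturality of cup product together with these two known actions to \emph{deduce} the action on the middle term $H^1(\Ml{2},\om{\Ml{2}}{-t})$: since the pairing is perfect and $S_3$-equivariant, $H^1(\Ml{2},\om{\Ml{2}}{-t})$ must be the tensor product of the sign representation with the $\Z[1/2]$-linear dual of $H^0(\Ml{2},\om{\Ml{2}}{t-4})$, i.e. $H^1$ is forced to be $\big(H^0\big)^\vee\otimes\Z_{\sgn}[1/2]$ as an $S_3$-module. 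The main obstacle is checking that the explicit change-of-variables action of the transformations \eqref{transf}, restricted to the $S_3$ generators $\sigma,\tau$ of Lemma \ref{ActionLambda}, really does carry the generating class $\eta/(\lambda_1\lambda_2)$ to $\pm$ itself with the sign equal to $\det$; this is exactly the computation underlying Lemma \ref{M2Omega}(b), so invoking that lemma discharges the obstacle and makes the equivariance of the perfect pairing automatic.
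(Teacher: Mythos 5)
Your proposal is correct and follows essentially the same route as the paper: both use the explicit monomial bases from Proposition \ref{M2cohomology} and Lemma \ref{M2Omega} to exhibit $\lambda_1^{a}\lambda_2^{b}$ and $\frac{\eta}{\lambda_1^{a+1}\lambda_2^{b+1}}$ as dual bases under multiplication into the class of $\frac{\eta}{\lambda_1\lambda_2}$, and both reduce the $S_3$-compatibility to the fact that a projective transformation acts by linear change of variables on $H^*(\Proj\Lambda,\Sh{O}(*))$ and multiplies $\eta$ by its determinant, which is the computation already recorded in Lemma \ref{M2Omega}(b). The paper likewise uses the resulting perfect pairing to read off the otherwise inaccessible $S_3$-action on $H^1$, exactly as you propose.
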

\begin{rem} Compatibility with the $ S_3 $-action simply means that for every $ g\in S_3 $, the following diagram commutes
{\small
\begin{align*}
\xymatrix@C=0.4pc{
H^0(\Ml{2},g^*\om{\Ml{2}}{t})\otimes H^1(\Ml{2},g^*\om{\Ml{2}}{-t}\otimes g^*\Om{\Ml{2}}) \ar[r]\ar[d]_g & H^1(\Ml{2},g^*\Om{\Ml{2}})\ar[d]^g\\
H^0(\Ml{2},\om{\Ml{2}}{t})\otimes H^1(\Ml{2},\om{\Ml{2}}{-t}\otimes\Om{\Ml{2}}) \ar[r]& H^1(\Ml{2},\Om{\Ml{2}})}.
\end{align*}
}
But we have made a choice of generators for $\Lambda= H^0(\Ml{2},\om{\Ml{2}}{*})\cong H^0(\Ml{2},g^*\om{\Ml{2}}{*})$, and we have described the $ S_3 $-action on those generators in Lemma \ref{ActionLambda}. If we think of the induced maps $ g:H^*(\Ml{2},g^*\om{\Ml{2}}{*})\to H^*(\Ml{2},\om{\Ml{2}}{*}) $ as a change of basis action of $ S_3 $, Theorem \ref{GSDuality} states that we have a perfect pairing of $ S_3 $-modules. As a consequence, there is an $ S_3 $-module isomorphism
\[ H^1(\Ml{2},\om{\Ml{2}}{*-4})\cong\Hom(\Lambda,\Z_{\sgn}[1/2])=\Lambda^\vee_{\sgn}. \]
(The subscript $ \sgn $ will always denote twisting by the sign representation of $ S_3 $.)
\end{rem}

\begin{proof}
Proposition \ref{M2cohomology} and Lemma \ref{M2Omega} give us explicitly all of the modules involved. Namely, $ H^0(\Proj\Lambda,\Sh{O}(*)) $ is free on the monomials $ \lambda_1^i\lambda_2^j $, for $ i,j\geq 0 $, and $ H^1(\Proj\Lambda,\Sh{O}(*)) $ is free on the monomials $ \frac{1}{\lambda_1^i\lambda_2^j }= \frac{1}{\lambda_1^{i-1}\lambda_2^{j-1} }\frac{1}{\lambda_1\lambda_2 }$, for $ i,j>0 $. Lemma \eqref{M2Omega} gives us in addition that $ H^1(\Proj\Lambda,\Sh{O}(*)\otimes\Om{\Ml{2}}) $ is free on $\frac{1}{\lambda_1^{i-1}\lambda_2^{j-1} }\frac{\eta}{\lambda_1\lambda_2 }$. We conclude that
\[ (\lambda_1^i\lambda_2^j, \frac{\eta}{\lambda_1^{i+1}\lambda_2^{j+1}})\mapsto \frac{\eta}{\lambda_1\lambda_2} \]
is really a perfect pairing.

Moreover, this pairing is compatible with any projective transformation $ \varphi $ of $ \Proj\Lambda $, which includes the $ S_3 $-action as well as change of basis. Any such $ \varphi $ acts on $ H^*(\Proj\Lambda,\Sh{O}(*)) $ by a linear change of variables, and changes $ \eta $ by the determinant $ \det\varphi $. Thus the diagram
{\small
\begin{align*}
\xymatrix@C=0.4pc{
H^0(\Ml{2},\varphi^*\om{\Ml{2}}{t}) \otimes H^1(\Ml{2},\varphi^*\om{\Ml{2}}{-t} \otimes \varphi^*\Om{\Ml{2}}) \ar[r]\ar@<-13ex>[d]_{\varphi}\ar@<6ex>[d]^{\varphi\otimes\det\varphi} & H^1(\Ml{2},\varphi^*\Om{\Ml{2}})\ar[d]^{\det\varphi}\\
H^0(\Ml{2},\om{\Ml{2}}{t}) \otimes H^1(\Ml{2},\om{\Ml{2}}{-t}\otimes\Om{\Ml{2}}) \ar[r]& H^1(\Ml{2},\Om{\Ml{2}})}
\end{align*}
}
commutes. \end{proof}

We explicitly described the induced action on the global sections $ H^0(\Proj\Lambda,\Sh{O}(*)) = \Lambda $ in Lemma \ref{ActionLambda}, and  in \eqref{M2Omega} we have identified $ H^1(\Proj\Lambda,\Om{\Proj\Lambda}) $ with the sign representation $ \Z_{\sgn} $ of $ S_3 $. Therefore, the perfect pairing is the natural map
\[\Lambda\otimes\Lambda_{\sgn}^\vee\to\Z_{\sgn}[1/2].\]

\section{Anderson Duality for $ Tmf(2) $}\label{sec:andersontmf2}

The above Serre duality pairing for $ \Ml{2} $ enables us to compute the homotopy groups of $ Tmf(2) $ as a module over $ S_3 $. We obtain that the $ E_2 $ term of the spectral sequence \eqref{ss:jardine} for $ Tmf(2) $ looks as follows:
\begin{figure}[h]
\centering
\includegraphics[width=\textwidth]{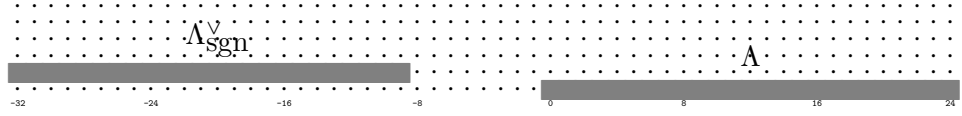}
\caption{Jardine spectral sequence \eqref{ss:jardine} for $\pi_* Tmf(2) $ }\label{Fig:tmf2}
\end{figure}

As there is no space for differentials or extensions, we conclude that \[ \pi_*Tmf(2)=\Lambda\oplus\Sigma^{-9}\Lambda^\vee_{\sgn}. \]
Even better, we are now able to prove self-duality for $ Tmf(2) $.

Since we are working with $ 2 $ inverted everywhere, the Anderson dual of $ Tmf(2) $ is defined by dualizing the homotopy groups as $ \Z[1/2] $-modules, as noted in Remark \ref{rem:Anderson}. Although this duality may deserve the notation $ I_{\Z[1/2]} $, we forbear in the interest of compactness of the notation.

Recall that the Anderson dual of $ Tmf(2) $ is the function spectrum $ F(Tmf(2),\IZ) $, so it inherits an action by $ S_3 $ from the one on $ Tmf(2) $.

\begin{thm}\label{thm:andersontmf2}
The Anderson dual of $ Tmf(2) $ is $ \Sigma^9Tmf(2) $. The inherited $ S_3 $-action on $ \pi_*\IZ Tmf(2) $ corresponds to the action on $ \pi_*\Sigma^9Tmf(2) $ up to a twist by sign.
\end{thm}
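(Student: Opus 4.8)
The plan is to first compute $ \pi_*\IZ Tmf(2) $, and then, rather than settling for an abstract isomorphism of homotopy groups, to realize the Serre duality pairing of Theorem \ref{GSDuality} by an actual map of spectra.

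First I would run the Anderson duality spectral sequence \eqref{ss:AndersonDual} in its $ \Z[1/2] $-linear form (Remark \ref{rem:Anderson}). Since $ \pi_*Tmf(2)=\Lambda\oplus\Sigma^{-9}\Lambda^\vee_{\sgn} $ is, in each degree, a finitely generated free $ \Z[1/2] $-module, every $ \Ext^1_{\Z[1/2]} $-term vanishes and the spectral sequence collapses to $ \pi_t\IZ Tmf(2)=\Hom_{\Z[1/2]}(\pi_{-t}Tmf(2),\Z[1/2]) $, the graded $ \Z[1/2] $-dual. Dualizing the splitting gives $ \pi_*\IZ Tmf(2)\cong\Lambda^\vee\oplus\Sigma^{9}\Lambda_{\sgn} $, which agrees with $ \pi_*\Sigma^9 Tmf(2)=\Sigma^9\Lambda\oplus\Lambda^\vee_{\sgn} $ after twisting by the sign representation. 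This already predicts both the shift by $ 9 $ and the sign twist, but only as an abstract isomorphism of $ S_3 $-modules.

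To upgrade this to an equivalence of spectra I would produce a fundamental class and pair against it using the $ E_\infty $-ring structure. The key numerical input is that $ \pi_{-9}Tmf(2)=\Z[1/2]_{\sgn} $ (the image of $ H^1(\Ml{2},\Om{\Ml{2}}) $, by Lemma \ref{M2Omega}) while $ \pi_{-10}Tmf(2)=0 $; hence the collapsed spectral sequence gives $ \pi_9\IZ Tmf(2)=\Hom_{\Z[1/2]}(\Z[1/2]_{\sgn},\Z[1/2]) $, free of rank one. I would let $ \kappa\colon Tmf(2)\to\Sigma^{-9}\IZ $ be the generator corresponding to the Serre residue, and form the adjoint $ D\colon \Sigma^9 Tmf(2)\to\IZ Tmf(2) $ of the composite $ \Sigma^9\bigl(Tmf(2)\wedge Tmf(2)\xrightarrow{\mu}Tmf(2)\xrightarrow{\kappa}\Sigma^{-9}\IZ\bigr) $. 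On homotopy, $ D $ sends $ \alpha $ to the functional $ \beta\mapsto\kappa(\alpha\beta) $, i.e. it realizes the multiplication pairing $ \pi_nTmf(2)\otimes\pi_{-9-n}Tmf(2)\to\pi_{-9}Tmf(2) $ followed by the residue.

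The main step is then to identify this multiplication pairing with the Serre duality pairing and conclude perfectness. Because the descent spectral sequence \eqref{ss:jardine} is multiplicative and collapses with no extension problems, the ring structure on $ \pi_*Tmf(2) $ is the cup-product structure: $ \Lambda=H^0 $ is the subring and the $ H^1 $-summand is the cup-product module over it, with products of two $ H^1 $-classes vanishing (they land in $ H^2=0 $) and products of two $ H^0 $-classes never reaching $ \pi_{-9} $ for degree reasons. Thus the only pairings landing in $ \pi_{-9} $ are the cup products $ H^0(\om{\Ml{2}}{t})\otimes H^1(\om{\Ml{2}}{-t-4})\to H^1(\Om{\Ml{2}}) $, which are perfect by Theorem \ref{GSDuality}. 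Hence $ D $ is an isomorphism on each homotopy group, and therefore an equivalence. Finally, since the residue identifies $ \pi_{-9}Tmf(2) $ with $ \Z[1/2]_{\sgn} $, the class $ \kappa $ is $ S_3 $-equivariant only up to sign, so $ D $ intertwines the two $ S_3 $-actions up to a twist by the sign representation, as claimed. I expect the genuine obstacle to be precisely this ascent---constructing $ \kappa $ and matching the spectrum-level pairing with the algebraic Serre pairing---whereas the vanishing of the $ \Ext^1 $-terms and of $ \pi_{-10}Tmf(2) $ are exactly what make the bookkeeping clean.
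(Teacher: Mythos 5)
Your proposal is correct and follows essentially the same route as the paper: the Anderson duality spectral sequence collapses on the free homotopy groups of $Tmf(2)$, and a generator of $\pi_9\IZ Tmf(2)$ is extended to an equivalence $\Sigma^9 Tmf(2)\to\IZ Tmf(2)$ using the $Tmf(2)$-module structure on $F(Tmf(2),\IZ)$. The only difference is that you spell out why the resulting class generates $\pi_*\IZ Tmf(2)$ as a $\pi_*Tmf(2)$-module --- by identifying the multiplication with the cup product in the collapsing descent spectral sequence and invoking the perfectness of the Serre pairing of Theorem \ref{GSDuality} --- a step the paper compresses into the phrase ``via a double duality map.''
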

\begin{proof}
On the level of homotopy groups, we have the spectral sequence \eqref{ss:AndersonDual} which collapses because each $ \pi_iTmf(2) $ is a free (and dualizable) $ \Z $-module. Thus
\[ \pi_*\IZ Tmf(2)=(\Lambda\oplus\Sigma^{-9}\Lambda^\vee_{\sgn})^\vee,\]
which is isomorphic to $ \Sigma^9(\pi_*Tmf(2))_{\sgn}=\Lambda^\vee\oplus\Sigma^9\Lambda_{\sgn} $, as $ \pi_*Tmf(2) $-modules via a double duality map. Now, $ \IZ Tmf(2) $, being defined as the function spectrum $ F(Tmf(2),\IZ) $, is naturally a $ Tmf(2) $-module, thus a dualizing class $f: S^9\to \IZ Tmf(2) $ extends to an equivalence $\tilde{f}: \Sigma^9Tmf(2)\to\IZ Tmf(2) $. Specifically, let $ f:S^9\to\IZ Tmf(2) $ represent a generator of $ \pi_9\IZ Tmf(2)=\Z[1/2] $, which is also a generator of $ \pi_*\IZ Tmf(2) $ as a $ \pi_*Tmf(2) $-module. Then the composition
\[ \tilde{f}:S^9\wedge Tmf(2) \xrightarrow{f\wedge \Id_{Tmf(2)}} \IZ Tmf(2)\wedge Tmf(2)\xrightarrow{\psi} \IZ Tmf(2), \]
where $ \psi $ is the $ Tmf(2) $-action map, gives the required equivalence. Namely, let $ a $ be an element of $ \pi_*Tmf(2) $; then $ \tilde{f}_*(\Sigma^9 a) =[f]a$, but $ f $ was chosen so that its homotopy class generates $ \pi_*\IZ Tmf(2) $ as a $ \pi_*Tmf(2) $-module.
\end{proof}

\section{Group Cohomology Computations}\label{sec:grpcohomology}

This section is purely technical; for further use, we compute the $S_3$ homology and cohomology of the module $H^*(\Ml{2},\om{\Ml{2}}{*})=\Lambda\oplus\Sigma^{-9}\Lambda^\vee_{\sgn}$, where the action is described in Lemma \ref{ActionLambda}. First we deal with Tate cohomology, and then we proceed to compute the invariants and coinvariants.

\subsection{Tate Cohomology}\label{sec:tatecohomology}

The symmetric group on three letters fits in a short exact sequence
\[
1\to C_3\to S_3\to C_2\to 1,
\]
producing a Lyndon-Hochschield-Serre spectral sequence for the cohomology of $ S_3 $. If $ 2 $ is invertible in the $ S_3 $-module $ M $, the spectral sequence collapses to give that the $ S_3 $ cohomology, as well as the $ S_3 $-Tate cohomology, is computed as the fixed points of the $ C_3 $-analogue
\begin{align*}
H^*(C_3,M)^{C_2}\cong H^*(S_3,M)\\
\tH^*(C_3,M)^{C_2}\cong \tH^*(S_3,M).
\end{align*}
Therefore, it suffices to compute the respective $C_3$-cohomology groups as $C_2$-modules.

To do this, we proceed as in \cite{\GHMR}. Give $A=\Z[1/2][x_0,x_1,x_2]$ the left $S_3$-action as follows: $g\in S_3$ maps $x_i$ to $(-1)^{\sgn g}x_{gi}$. We have a surjection of $S_3$-modules $A\to \Lambda$ given by
\begin{align*}
x_0&\mapsto\lambda_1\\
\sigma(x_0)=x_1&\mapsto\lambda_2-\lambda_1=\sigma(\lambda_1)\\
\sigma^2(x_0)=x_2&\mapsto-\lambda_2=\sigma^2(\lambda_2).
\end{align*}
The kernel of this map is the ideal generated by $\sigma_1=x_0+x_1+x_2$. Therefore, we have a short exact sequence
\begin{equation}\label{AtoLambda}
0\to A\sigma_1\to A\to \Lambda\to 0.
\end{equation}

The orbit under $\sigma$ of each monomial of $A$ has $3$ elements, unless that polynomial is a power of $\sigma_3=x_0x_1x_2$. Therefore, $A$ splits as a sum of a $S_3$-module $ F $ with free $C_3$-action and $\Z[\sigma_3]$ which has trivial $C_3$-action, i.e.
\begin{equation}\label{Adecomposition}
A=F\oplus\Z[\sigma_3].
\end{equation}
Let $ N:A\to H^0(C_3,A) $ be the additive norm map, and let $d$ denote the cohomology class in bidegree $ (0,6) $ represented by $\sigma_3$. Then we have an exact sequence
\[ A\xrightarrow{N} H^*(C_3,A)\to \Z/3[b,d]\to 0, \]
where $ b $ is a cohomology class of bidegree $ (2,0) $. The Tate cohomology of $ A $ is then
\[
\tH^*(C_3,A)\cong H^*(C_3,A)[b^{-1}] \stackrel{\sim}{\rightarrow}\Z/3[b^{\pm 1},d].
\]
The quotient $C_2$-action is given by $\tau(b)=-b$ and $\tau(d)=-d$. Similarly, noting that the degree of $\sigma_1$ is $2$, and $\tau(\sigma_1)=-\sigma_1$, we obtain that the $ C_3 $-cohomology of the module $ A\sigma_1 $ is the same as that of $ A $, with the internal grading shifted by $ 2 $, and the quotient $ C_2 $-action twisted by sign. In other words,
\[
\tH^*(C_3,A\sigma_1)\cong \Sigma^2\left( (\Z_{\sgn}/3)[\tilde b^{\pm 1},\tilde d]\right).
\]
where again $\tilde b$ and $\tilde d$ have bidegrees $(2,0)$ and $(0,6)$ respectively, and the quotient $C_2$-action is described by
\[
\tau:\quad\tilde b^i\tilde d^j\mapsto (-1)^{i+j+1}\tilde b^i\tilde d^j.
\]
Note that $\tH^*(C_3,A)$ and $\tH^*(C_3,A\sigma_1)$ are concentrated in even cohomological degrees. Therefore, the long exact sequence in cohomology induced by \eqref{AtoLambda} breaks up into the exact sequences
\begin{align}
0\to \tH^{2k-1}(C_3,\Lambda)\to \tH^{2k}(C_3,A\sigma_1)\to \tH^{2k}(C_3,A)\to \tH^{2k}(C_3,\Lambda)\to 0.
\end{align}
The middle map in this exact sequence is zero, because it is induced by multiplication by $ \sigma_1 $, which is in the image of the additive norm on $A$. It follows that
\[
\tH^*(C_3,\Lambda)\cong\Z/3[a,b^{\pm 1},d]/(a^2),
\]
where $ a $ is the element in bidegree $ (1,2) $ which maps to $ \tilde{b}\in \tH^2(C_3,A\sigma_1) $. The quotient action by $C_2$ is described as
\begin{align*}
\tau:\quad &a\mapsto a\\
&b\mapsto -b\\
&d\mapsto -d.
\end{align*}
Now it only remains to take fixed points to compute the Tate cohomology of $ \Lambda $ and $ \Lambda_{\sgn} $.
\begin{proposition}\label{prop:TateCohomology}
Denote by $ R $ the graded ring $ \Z/3[a,b^{\pm 2},d^2]/(a^2) $. Then Tate cohomology of the $ S_3 $-modules $ \Lambda $ and $ \Lambda_{\sgn} $ is
\begin{align*}
\tH^*(S_3,\Lambda)&=R\oplus Rbd,\\
\tH^*(S_3,\Lambda_{\sgn})&=Rb\oplus Rd. 
\end{align*}
\end{proposition}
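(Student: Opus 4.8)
The plan is to reduce everything to a computation of $C_2$-fixed points on the already-described $C_3$-Tate cohomology. Indeed, as recorded at the start of Section \ref{sec:tatecohomology}, since $2$ is invertible in $\Lambda$ the Lyndon-Hochschild-Serre spectral sequence for $1\to C_3\to S_3\to C_2\to 1$ collapses, yielding $\tH^*(S_3,M)\cong \tH^*(C_3,M)^{C_2}$ for $M$ either $\Lambda$ or $\Lambda_{\sgn}$. Thus it suffices to extract the $C_2$-invariants from the ring
\[ \tH^*(C_3,\Lambda)\cong \Z/3[a,b^{\pm 1},d]/(a^2), \]
on which $\tau$ acts by $a\mapsto a$, $b\mapsto -b$, $d\mapsto -d$.

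First I would treat the untwisted case. A $\Z/3$-basis of $\tH^*(C_3,\Lambda)$ is given by the monomials $a^\epsilon b^i d^j$ with $\epsilon\in\{0,1\}$, $i\in\Z$, and $j\geq 0$, and $\tau$ scales such a monomial by $(-1)^{i+j}$. Hence the invariants are spanned by exactly those monomials with $i+j$ even, which I would organize according to the parities of $i$ and $j$ separately: the monomials with both $i,j$ even are precisely $R=\Z/3[a,b^{\pm 2},d^2]/(a^2)$, while those with both $i,j$ odd are exactly $Rbd$. This gives $\tH^*(S_3,\Lambda)=R\oplus Rbd$; I would note in passing that $a$, $b^{\pm 2}$, $d^2$, and $bd$ are all $\tau$-invariant and generate this as a ring, with the relation $(bd)^2=b^2d^2$, which confirms the claimed $R$-module splitting.

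Second I would handle the sign-twisted case. Twisting by the sign representation replaces the $C_2$-action by its product with $\sgn(\tau)=-1$, so now $\tau$ scales $a^\epsilon b^i d^j$ by $(-1)^{i+j+1}$. The invariants are therefore the monomials with $i+j$ odd; sorting again by parity, the monomials with $i$ odd and $j$ even form $Rb$ and those with $i$ even and $j$ odd form $Rd$, giving $\tH^*(S_3,\Lambda_{\sgn})=Rb\oplus Rd$ as an $R$-module.

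There is no serious obstacle here: once the $C_3$-computation and the collapse of the LHS spectral sequence are in hand, the argument is bookkeeping of monomial parities. The only points requiring care are tracking the extra sign introduced by the twist in the second case, and verifying that the two invariant submodules are freely generated over $R$ by $\{1,bd\}$ and $\{b,d\}$ respectively, which follows directly from the even/odd dichotomy above.
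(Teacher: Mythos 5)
Your proposal is correct and follows the same route as the paper: the paper likewise reduces to $C_2$-fixed points of $\tH^*(C_3,\Lambda)\cong\Z/3[a,b^{\pm1},d]/(a^2)$ via the collapsing Lyndon-Hochschild-Serre spectral sequence and reads off the invariants from the signs $\tau(b)=-b$, $\tau(d)=-d$ (with the extra sign for $\Lambda_{\sgn}$). Your explicit monomial-parity bookkeeping is just a more detailed write-up of the step the paper leaves as ``it only remains to take fixed points.''
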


\begin{rem}
The classes $a$ and $bd$ will represent the elements of $ \pi_*Tmf $ commonly known as $\alpha$ and $\beta$, respectively, at least up to a unit.
\end{rem}

\subsection{Invariants}\label{sec:invariants}

We now proceed to compute the invariants $ H^0(S_3,H^*(\Ml{2}, \om{ \Ml{2}} {*} ) )$. The result is summarized in the next proposition.
\begin{proposition}\label{prop:invariants}
The invariants of $ \Lambda $ under the $ S_3 $-action are isomorphic to the ring of modular forms $ MF_*[1/2] $, i.e.
\[ \Lambda^{S_3}= \Z[1/2][c_4,c_6,\Delta]/(1728\Delta-c_4^3-c_6^2).  \]
The twisted invariants module $ \Lambda_{\sgn}^{S_3} $ is a free $ \Lambda^{S_3} $-module on a generator $ d $ of degree $ 6 $.
\end{proposition}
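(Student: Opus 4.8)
The plan is to exhibit the three invariants by hand, record the single relation they satisfy, and prove they generate by reusing the sign--twisted presentation \eqref{AtoLambda}. For the universal curve $y^2=x(x-\lambda_1)(x-\lambda_2)$ over $\Ml{2}^0$, the Weierstrass invariants work out (in the grading where each $\lambda_i$ has degree $2$) to
\[
c_4=16(\lambda_1^2-\lambda_1\lambda_2+\lambda_2^2),\quad
c_6=-32(\lambda_1+\lambda_2)(2\lambda_1-\lambda_2)(2\lambda_2-\lambda_1),\quad
\Delta=16\lambda_1^2\lambda_2^2(\lambda_1-\lambda_2)^2,
\]
of degrees $4$, $6$, $12$, satisfying $c_4^3-c_6^2=1728\Delta$. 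Using Lemma \ref{ActionLambda} one checks directly that $c_4,c_6,\Delta$ are $S_3$--invariant; equivalently they are pulled back along $q$ from modular forms on $\M[1/2]$ and hence fixed by the deck group $GL_2(\Z/2)=S_3$. This gives a map $\Z[1/2][c_4,c_6,\Delta]/(c_4^3-c_6^2-1728\Delta)\to\Lambda^{S_3}$. The generator $c_4^3-c_6^2-1728\Delta$ is primitive and linear in $\Delta$, hence irreducible, so the source is a characteristic--zero domain; after $\otimes\,\Q$ the relation solves for $\Delta$ and the map becomes $\Q[c_4,c_6]\hookrightarrow\Q[\lambda_1,\lambda_2]$, injective because $c_4,c_6$ are the algebraically independent fundamental invariants of the reflection representation of $S_3$ on $\langle\lambda_1,\lambda_2\rangle$. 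Thus the map is injective, and everything reduces to proving surjectivity onto $\Lambda^{S_3}$.

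The surjectivity is where the genuine content lies, the subtlety being that $3$ divides $|S_3|$ and is not inverted. I would deduce it from \eqref{AtoLambda}: applying $(-)^{S_3}$ yields a left--exact sequence whose failure of right exactness is measured by $H^1(S_3,A\sigma_1)$. Restricted to $C_3$, the module $A\sigma_1$ is just $A$ with the plain permutation action, since the sign twists are trivial on the even permutations; by the splitting \eqref{Adecomposition}, $A=F\oplus\Z[1/2][\sigma_3]$ with $F$ a free $C_3$--module and $\Z[1/2][\sigma_3]$ a trivial one, so $H^1(C_3,A)=0$. As $2$ is invertible, $H^1(S_3,A\sigma_1)=H^1(C_3,A\sigma_1)^{C_2}=0$, and therefore the map $A^{S_3}\to\Lambda^{S_3}$ on twisted invariants is onto. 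It now suffices to compute the image of $A^{S_3}$.

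By parity, a twisted--invariant element of $A$ is symmetric in even $x$--degree and alternating in odd $x$--degree, so $A^{S_3}=E\oplus V\!\cdot\!E$, where $E=\Z[1/2][\sigma_1^2,\sigma_2,\sigma_1\sigma_3,\sigma_3^2]$ is the even symmetric subring and $V=\prod_{i<j}(x_i-x_j)$ is the Vandermonde (so $V\!\cdot\!E$ is the odd alternating part). Under $A\to\Lambda$ one computes $\sigma_1\mapsto 0$, $\sigma_2\mapsto -c_4/16$, $\sigma_3\mapsto\lambda_1\lambda_2(\lambda_1-\lambda_2)=:d$ (so $\sigma_3^2\mapsto\Delta/16$) and $V\mapsto -c_6/32$; the classical identity $-4\sigma_2^3-27\sigma_3^2=V^2$ for the discriminant of a depressed cubic specializes to $c_4^3-c_6^2=1728\Delta$, confirming the relation. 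Since $\sigma_1\mapsto 0$ kills $\sigma_1^2$ and $\sigma_1\sigma_3$, the image of $E$ is $\Z[1/2][c_4,\Delta]$ and the image of $V\!\cdot\!E$ is $c_6\,\Z[1/2][c_4,\Delta]$; together these are exactly the subring generated by $c_4,c_6,\Delta$. Hence $\Lambda^{S_3}=\Z[1/2][c_4,c_6,\Delta]/(c_4^3-c_6^2-1728\Delta)=MF_*[1/2]$, proving the first claim.

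For the twisted invariants $\Lambda^{S_3}_{\sgn}$, i.e.\ the alternating elements of $\Lambda$, I would run the identical argument tensored with the sign representation. The obstruction $H^1(S_3,(A\sigma_1)_{\sgn})$ again vanishes, because $\sgn$ is trivial on $C_3$ so the relevant $C_3$--cohomology is the same as before. Thus $\Lambda^{S_3}_{\sgn}$ is the image of the sign--isotypic part of $A$, which is $A^{S_3}\sigma_1\oplus A^{S_3}\sigma_3$ (multiply invariants by the sign--isotypic generators $\sigma_1,\sigma_3$). Because $\sigma_1\mapsto 0$ and $\sigma_3\mapsto d$, this image is $\Lambda^{S_3}\!\cdot d$. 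Finally $d$ is a nonzerodivisor in the domain $\Lambda$, so $\Lambda^{S_3}\!\cdot d\cong\Lambda^{S_3}$ is free of rank one on the degree--six class $d$. The only real obstacle throughout is the non--invertibility of $3$, which is exactly why $\Delta$ must be carried as an independent generator rather than as $(c_4^3-c_6^2)/1728$; the vanishing of $H^1(S_3,A\sigma_1)$, coming from the free $C_3$--action on $F$ in \eqref{Adecomposition}, is precisely what makes the explicit symmetric--function computation valid integrally over $\Z[1/2]$.
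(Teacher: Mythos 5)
Your proof is correct and follows essentially the same route as the paper's: both hinge on the presentation \eqref{AtoLambda}, the vanishing of $H^1$ of the ideal $A\sigma_1$ coming from the splitting \eqref{Adecomposition}, and the identification of the surviving symmetric and alternating polynomials with $c_4$, $c_6$, $\Delta$ and $d$. The only differences are organizational — you extract the trivial and sign isotypic parts of $A$ under the full twisted $S_3$-action directly, where the paper first computes $\Lambda^{C_3}=A^{C_3}/(\sigma_1)$ and then takes $C_2$-eigenspaces to get both answers at once — plus your explicit injectivity check, which the paper leaves implicit.
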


\begin{proof}
Let $\varepsilon\in A$ denote the alternating polynomial $(x_1-x_2)(x_1-x_3)(x_2-x_3)$. Then $\varepsilon^2$ is symmetric, so it must be a polynomial $ g(\sigma_1,\sigma_2,\sigma_3) $ in the elementary symmetric polynomials. Indeed, $ g $ is the discriminant of the polynomial \[(x-x_0)(x-x_1)(x-x_2)=x^3+\sigma_1x^2+\sigma_2x+\sigma_3.\quad \eqref{Ex}\]
The $ C_3 $ invariants in $ A $ are the alternating polynomials in three variables
\[
A^{C_3}=\Z[\sigma_1,\sigma_2,\sigma_3,\varepsilon]/(\varepsilon^2-g).
\]
The quotient action by $ C_2 $ fixes $\sigma_2$ and $\varepsilon$, and changes the sign of $\sigma_1$ and $\sigma_3$. Since $ C_3 $ fixes $ \sigma_1 $, the invariants in the ideal in $ A $ generated by $ \sigma_1 $ are the ideal generated by $ \sigma_1 $ in the invariants $ A^{C_3} $. As $ H^1(C_3,A\sigma_1) =0$, the long exact sequence in cohomology gives a short exact sequence of invariants
\[
\sigma_1 A^{C_3}\to A^{C_3}\to\Lambda^{C_3}\to 0.
\]
Denoting by $ p $ the quotient map $ A^{C_3}\to\Lambda^{C_3} $, we now have
\[
\Lambda^{C_3}\cong A^{C_3}/(\sigma_1)=\Z[p(\sigma_2),p(\sigma_3),p(\varepsilon)]/p(\varepsilon^2+27\sigma_3^2+4\sigma_2^3)
\]
where $\tau$ fixes $p(\sigma_2)$ and $p(\varepsilon)$ and changes the sign of $p(\sigma_3)$. It is consistent with the above computations of Tate cohomology to denote $ \sigma_3 $ and $ p(\sigma_3) $ by $ d $. The invariant quantities are well-known; they are the modular forms of $E_\lambda$ of \eqref{Elambda}, the universal elliptic curve over $\Ml{2}$:
\begin{align}\label{eq:lambdamodular}
\begin{split}
p(\sigma_2)=&-(\lambda_1^2+\lambda_2^2-\lambda_1\lambda_2)=-\frac{1}{16}c_4\\
p(\varepsilon)=&-(\lambda_1+\lambda_2)(2\lambda_1^2+2\lambda_2^2-5\lambda_1\lambda_2)=\frac{1}{32}c_6\\
p(\sigma_3^2)=&d^2=\lambda_1^2\lambda_2^2(\lambda_2-\lambda_1)^2=\frac{1}{16}\Delta.
\end{split}
\end{align}

Hence, $ d $ is a square root of the discriminant $ \Delta $, and since $ 2 $ is invertible, we get that the invariants
\begin{align}\label{eq:invariants}
\begin{split}
\Lambda^{S_3}&=\Z[1/2][p(\sigma_2),p(\sigma_3^2),p(\varepsilon)]/p(\varepsilon^2+27\sigma_3^2+4\sigma_2^3)\\
&=\Z[1/2][c_4,c_6,\Delta]/(1728\Delta-c_4^3+c_6^2)=MF_*
\end{split}
\end{align}
are the ring of modular forms, as expected. Moreover, there is a splitting
$ \Lambda^{C_3}\cong \Lambda^{S_3}\oplus d\Lambda^{S_3}$, giving that
\begin{align}\label{inv:splitting}
\Lambda_{\sgn}^{S_3}=d\Lambda^{S_3}.
\end{align}
\end{proof}

\subsection{Coinvariants and Dual Invariants}\label{sec:coinvariants}

To be able to use Theorem \eqref{GSDuality} to compute homotopy groups, we also need to know the  $ S_3 $-cohomology of the signed dual of $ \Lambda $. For this, we can use the composite functor spectral sequence for the functors $ \Hom_{\Z}(-,\Z) $ and $ \Z\tensor{\Z S_3}(-) $. Since $ \Lambda $ is free over $ \Z $, we get that \[ \Hom_{\Z}(\Z\tensor{\Z S_3}\Lambda_{\sgn},\Z)\cong \Hom_{\Z S_3}(\Z,\Lambda^\vee_{\sgn}),\]and a spectral sequence
\begin{align}\label{ss:dualcohomology}
\Ext^p_{\Z}(H_q(S_3,\Lambda_{\sgn}),\Z)\Rightarrow H^{p+q}(S_3,\Lambda_{\sgn}^\vee).
\end{align}

The input for this spectral sequence is computed in the following lemma.
\begin{lemma}\label{lemma:coinvariants}
The coinvariants of $ \Lambda $ and $ \Lambda_{\sgn} $ under the $ S_3$ action are
\begin{align*}
H_0(S_3,\Lambda)=(3,c_4,c_6)\oplus ab^{-1}d \Z/3[\Delta]\\
H_0(S_3,\Lambda_{\sgn})=d(3,c_4,c_6)\oplus ab^{-1} \Z/3[\Delta],
\end{align*}
where $ (3,c_4,c_6) $ is the ideal of the ring $ \Lambda^{S3}=MF_* $ of modular forms generated by $ 3,c_4 $ and $ c_6 $, and $ d(3,c_4,c_6) $ is the corresponding submodule of the free $ \Lambda^{S_3}$-module generated by $ d $.
\end{lemma}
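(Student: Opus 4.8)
The plan is to read off the coinvariants from the two computations we already have in hand, namely the invariants $H^0(S_3,-)$ of Proposition \ref{prop:invariants} and the Tate cohomology $\tH^*(S_3,-)$ of Proposition \ref{prop:TateCohomology}, by splicing them together with the norm exact sequence. For a finite group $G$ and a $G$-module $M$, the additive norm $N=\sum_{g\in G}g$ induces a map $\bar N\colon M_G\to M^G$ from coinvariants to invariants whose kernel and cokernel are the Tate groups in degrees $-1$ and $0$, giving the four-term exact sequence
\[ 0\to \tH^{-1}(G,M)\to H_0(G,M)\xrightarrow{\bar N} H^0(G,M)\to \tH^0(G,M)\to 0. \]
I would apply this with $G=S_3$ to $M=\Lambda$ and to $M=\Lambda_{\sgn}$, so that the only genuinely new work is to identify the image of $\bar N$.

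First I would extract $\tH^{-1}$ and $\tH^0$ from Proposition \ref{prop:TateCohomology} by bidegree bookkeeping. For $\Lambda$, the cohomological degree $0$ part of $R\oplus Rbd$ is $\Z/3[d^2]=\F_3[\Delta]$, concentrated in internal degrees divisible by $12$, while the cohomological degree $-1$ part is $ab^{-1}d\,\F_3[\Delta]$; for $\Lambda_{\sgn}$, the corresponding pieces of $Rb\oplus Rd$ are $d\,\F_3[\Delta]$ (degrees $\equiv 6\bmod 12$) and $ab^{-1}\F_3[\Delta]$. Together with $H^0(S_3,\Lambda)=MF_*$ and $H^0(S_3,\Lambda_{\sgn})=d\,MF_*$ from Proposition \ref{prop:invariants}, this fixes three of the four terms in each sequence.

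The crux is to identify the surjection $H^0\to\tH^0$, equivalently its kernel $\im\bar N$. For $\Lambda$ this is a map $MF_*\to\F_3[\Delta]$, which automatically kills $3$ since the target is an $\F_3$-module, and automatically kills $c_4$ and $c_6$ for internal degree reasons, since $\tH^0(S_3,\Lambda)=\F_3[\Delta]$ has nothing in degrees $4$ or $6$. Hence the map factors through $MF_*/(3,c_4,c_6)=\F_3[\Delta]$, and the induced degree-preserving self-surjection of $\F_3[\Delta]$ is an isomorphism, so $\im\bar N=(3,c_4,c_6)$. The identical degree argument applied to $d\,MF_*\to d\,\F_3[\Delta]$, using that $dc_4$ and $dc_6$ land in degrees $10$ and $12$ (neither $\equiv 6\bmod 12$), gives $\im\bar N=d(3,c_4,c_6)$.

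The exact sequence then collapses to
\[ 0\to ab^{-1}d\,\F_3[\Delta]\to H_0(S_3,\Lambda)\to (3,c_4,c_6)\to 0 \]
and its evident $\Lambda_{\sgn}$-analogue. Finally I would split these: the sub is $3$-torsion while the quotient is a submodule of the torsion-free ring $MF_*$, so in each internal degree the quotient is a finitely generated torsion-free, hence free, module over the principal ideal domain $\Z[1/2]$; choosing degreewise sections and assembling produces the claimed direct sum decompositions. The main obstacle I anticipate is not any single hard computation but the careful bookkeeping needed to isolate exactly $\tH^{-1}$ and $\tH^0$ from the $b$-periodic Tate cohomology, and then pinning down $\bar N$ precisely enough to recognize its image as the ideal $(3,c_4,c_6)$ — where the decisive, easily overlooked point is the purely numerical fact that $\tH^0$ is empty in the degrees of $c_4$ and $c_6$.
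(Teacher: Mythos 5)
Your proof is correct and follows essentially the same route as the paper's: both splice the invariants of Proposition \ref{prop:invariants} and the Tate cohomology of Proposition \ref{prop:TateCohomology} together via the four-term norm exact sequence, identify the kernel of $H^0\to\tH^0$ as the ideal $(3,c_4,c_6)$ (resp.\ $d(3,c_4,c_6)$), and split the resulting short exact sequence using that this kernel is degreewise free over $\Z[1/2]$. Your degree-counting justification for why $c_4$ and $c_6$ must vanish in $\tH^0$ merely makes explicit a step the paper asserts without comment.
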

\begin{proof}
We use the exact sequence 
\begin{align}\label{seq:normM}
0\to \tH^{-1}(S_3,M)\to H_0(S_3,M)\xrightarrow{N} H^0(S_3,M)\to \tH^0(S_3,M)\to 0.
\end{align}
For $ M=\Lambda $, this is
\[ 0\to ab^{-1}d\Z/3[\Delta]  \to H_0(S_3,\Lambda )\xrightarrow{N} \Z[c_4,c_6,\Delta]/(\sim) \xrightarrow{\pi} \Z/3[\Delta]\to 0, \]
where the rightmost map $ \pi $ sends $ c_4 $ and $ c_6 $ to zero, and $ \Delta $ to $ \Delta $. Hence its kernel is the ideal $ (3,c_4,c_6) $, which is a free $ \Z $-module so we have a splitting as claimed.

Similarly, for $ M=\Lambda_{\sgn} $, the exact sequence \eqref{seq:normM} becomes
\[ 0\to ab^{-1}\Z/3[\Delta]  \to H_0(S_3,\Lambda_{\sgn})\xrightarrow{N} d\Z[c_4,c_6,\Delta]/(\sim) \xrightarrow{d\pi} d\Z/3[\Delta]\to 0. \]
The kernel of $ d\pi $ is the ideal $ d(3,c_4,c_6) $, and the result follows.
\end{proof}

\begin{cor}\label{prop:dualinvariants}
The $ S_3 $-invariants of the dual module $ \Lambda^\vee $ are the module dual to the ideal $ (3,c_4, c_6) $, and the $ S_3 $-invariants of the dual module $ \Lambda_{\sgn}^\vee $ are the module dual to the ideal $ d(3,c_4,c_6) $.
\end{cor}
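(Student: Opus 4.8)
The plan is to read both invariant modules straight off the spectral sequence \eqref{ss:dualcohomology} together with the coinvariant computation of Lemma \ref{lemma:coinvariants}, with essentially no extra work. The governing observation is that \eqref{ss:dualcohomology} is really the universal-coefficient spectral sequence attached to the complex $ \Z\tensor{\Z S_3}P_\bullet $, where $ P_\bullet\to\Z $ is a $ \Z S_3 $-projective resolution: the terms $ \Z\tensor{\Z S_3}P_q $ are free over $ \Z[1/2] $, and applying $ \Hom_{\Z}(-,\Z) $ computes $ H^{*}(S_3,\Lambda_{\sgn}^\vee) $ via the adjunction $ \Hom_{\Z}(\Z\tensor{\Z S_3}\Lambda_{\sgn},\Z)\cong\Hom_{\Z S_3}(\Z,\Lambda_{\sgn}^\vee) $. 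Since $ 2 $ is inverted and $ \Z[1/2] $ is a PID, $ \Ext^p_{\Z}$ vanishes for $ p\geq 2 $ and the spectral sequence degenerates into the usual short exact sequences
\[ 0\to \Ext^1_{\Z}\bigl(H_{q-1}(S_3,\Lambda_{\sgn}),\Z\bigr)\to H^q(S_3,\Lambda_{\sgn}^\vee)\to \Hom_{\Z}\bigl(H_q(S_3,\Lambda_{\sgn}),\Z\bigr)\to 0. \]
At $ q=0 $ the $ \Ext^1 $-term involves $ H_{-1}(S_3,\Lambda_{\sgn})=0 $ and so drops out, leaving the edge isomorphism $ H^0(S_3,\Lambda_{\sgn}^\vee)\cong\Hom_{\Z}\bigl(H_0(S_3,\Lambda_{\sgn}),\Z\bigr) $. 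The identical composite-functor argument, run for $ \Lambda $ in place of $ \Lambda_{\sgn} $, yields $ H^0(S_3,\Lambda^\vee)\cong\Hom_{\Z}\bigl(H_0(S_3,\Lambda),\Z\bigr) $.

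Next I would substitute the coinvariants supplied by Lemma \ref{lemma:coinvariants}. Each splits as a free $ \Z[1/2] $-summand that is an ideal (or its $ d $-multiple) inside the ring of modular forms $ MF_* $, plus a torsion summand of shape $ ab^{-1}(\ast)\Z/3[\Delta] $. Applying $ \Hom_{\Z}(-,\Z) $ annihilates the torsion part, because $ \Hom_{\Z}(\Z/3,\Z)=0 $, and carries the free part to its honest $ \Z $-linear dual. Hence
\[ H^0(S_3,\Lambda^\vee)\cong\Hom_{\Z}\bigl((3,c_4,c_6),\Z\bigr),\qquad H^0(S_3,\Lambda_{\sgn}^\vee)\cong\Hom_{\Z}\bigl(d(3,c_4,c_6),\Z\bigr), \]
which are precisely the modules dual to the ideals $ (3,c_4,c_6) $ and $ d(3,c_4,c_6) $, as claimed.

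I do not expect a genuine obstacle here: the statement is a formal consequence of the two results just cited. The only points deserving care are bookkeeping. First, one must confirm that the first-quadrant edge really isolates $ \Hom_{\Z}(H_0,\Z) $ at total degree zero; this is exactly the vanishing of the $ \Ext^1(H_{-1},-) $ correction above, valid because homology in negative degrees is zero. Second, one must check that the torsion summands of the coinvariants are genuinely $ \Z $-torsion, so that they contribute nothing to $ \Hom_{\Z}(-,\Z) $ (equivalently $ \Hom_{\Z[1/2]}(-,\Z[1/2]) $, as in Remark \ref{rem:Anderson}) — and this is immediate from their explicit $ \Z/3[\Delta] $-description in Lemma \ref{lemma:coinvariants}.
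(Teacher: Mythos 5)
Your proposal is correct and follows exactly the paper's (very terse) argument: run the composite-functor spectral sequence \eqref{ss:dualcohomology}, feed in the coinvariants from Lemma \ref{lemma:coinvariants}, and dualize, with the torsion summands dying under $\Hom_{\Z}(-,\Z)$. You have merely made explicit the degeneration into universal-coefficient short exact sequences and the edge identification at $q=0$, which the paper leaves implicit.
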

\begin{proof}
In view of the above spectral sequence \eqref{ss:dualcohomology}, to compute the invariants it suffices to compute the coinvariants, which we just did in Lemma \ref{lemma:coinvariants}, and dualize.
\end{proof}

We need one more computational result crucial in the proof of the main Theorem \ref{MainThm}.

\begin{proposition}\label{prop:e2shift}
There is an isomorphism of modules over the cohomology ring $H^{*}(S_{3}, \pi_{*}Tmf(2))$
\[ H^*(S_3,\pi_*\IZ Tmf(2))\cong H^*(S_3,\pi_*\Sigma^{21}Tmf(2)). \]
\end{proposition}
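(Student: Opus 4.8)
The plan is to turn the statement into a purely algebraic comparison of two $E_2$-pages and then read both of them off the computations of Section \ref{sec:grpcohomology}. First I would invoke Theorem \ref{thm:andersontmf2}: it identifies $\IZ Tmf(2)$ with $\Sigma^9 Tmf(2)$ equipped with the sign-twisted $S_3$-action, so on homotopy it yields an isomorphism of $\pi_*Tmf(2)$-modules with $S_3$-action $\pi_*\IZ Tmf(2)\cong\Sigma^9(\pi_*Tmf(2))_{\sgn}$. Applying $H^*(S_3,-)$ turns the left-hand side into $\Sigma^9 H^*(S_3,(\pi_*Tmf(2))_{\sgn})$, and the right-hand side is $\Sigma^{21}H^*(S_3,\pi_*Tmf(2))$, so the proposition becomes the assertion that twisting the coefficients by the sign representation shifts the cohomology by twelve, namely $H^*(S_3,(\pi_*Tmf(2))_{\sgn})\cong\Sigma^{12}H^*(S_3,\pi_*Tmf(2))$ as modules over $H^*(S_3,\pi_*Tmf(2))$.

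Next I would feed in the $S_3$-equivariant splitting $\pi_*Tmf(2)=\Lambda\oplus\Sigma^{-9}\Lambda^\vee_{\sgn}$ furnished by the collapse of the Jardine spectral sequence. This reduces each side to the four modules $H^*(S_3,\Lambda)$, $H^*(S_3,\Lambda_{\sgn})$, $H^*(S_3,\Lambda^\vee)$ and $H^*(S_3,\Lambda^\vee_{\sgn})$, all of which are already at hand: the invariants ($s=0$) are Proposition \ref{prop:invariants} for $\Lambda,\Lambda_{\sgn}$ and Corollary \ref{prop:dualinvariants} for the duals, while the positive cohomological degrees agree with Tate cohomology and are given by Proposition \ref{prop:TateCohomology}. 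The dual cases are extracted from these via the universal coefficient spectral sequence \eqref{ss:dualcohomology}, together with the coinvariant computation of Lemma \ref{lemma:coinvariants} and the norm sequence \eqref{seq:normM}.

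The comparison is organized by the square root of the discriminant, the class $d\in\Lambda^{S_3}_{\sgn}$ of degree twelve with $d^2\sim\Delta$. On invariants the shift is transparent: multiplication by $d$ identifies $\Lambda^{S_3}_{\sgn}=d\cdot MF_*$ with $\Sigma^{12}MF_*=\Sigma^{12}\Lambda^{S_3}$, and the relation $d\,(3,c_4,c_6)=\Sigma^{12}(3,c_4,c_6)$ likewise identifies the two dual invariant modules up to the same shift. On the $3$-primary higher cohomology the shift is implemented through the periodicity operators $b$ and $d$ of Proposition \ref{prop:TateCohomology} and the Tate-duality isomorphism $\tH^n(S_3,N^\vee)\cong\tH^{-n}(S_3,N)^\vee$, which is exactly the content of the edge terms of \eqref{ss:dualcohomology}; I would then verify compatibility with the action of $a$, $bd$, and $c_4,c_6,\Delta$.

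The hard part will be that this identification cannot respect the $\Lambda$/$\Lambda^\vee$ splitting term by term. Anderson duality is $\Z[1/2]$-linear dualization, and through the $\Ext^1$-terms of \eqref{ss:dualcohomology} it converts $3$-torsion in homological degree $q$ into $3$-torsion in cohomological degree $q+1$; consequently the free (invariant) and torsion (Tate) contributions of the two charts a priori sit in different cohomological filtrations. The crux of the argument is therefore the $3$-primary bookkeeping: one must track, on the explicit charts, that after the combined suspension by twenty-one and the sign twist the torsion classes produced by $d$-periodicity on one side match precisely those produced by duality on the other. This reorganization — the very mechanism that upgrades the shift $9$ at the level of $Tmf(2)$ to the shift $21$ after descent — is where essentially all of the work resides.
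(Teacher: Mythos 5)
Your proposal follows essentially the same route as the paper's proof: reduce via Theorem \ref{thm:andersontmf2} and the splitting $\pi_*Tmf(2)=\Lambda\oplus\Sigma^{-9}\Lambda^\vee_{\sgn}$ to a comparison of the four modules $H^*(S_3,\Lambda)$, $H^*(S_3,\Lambda_{\sgn})$, $H^*(S_3,\Lambda^\vee)$, $H^*(S_3,\Lambda^\vee_{\sgn})$, realize the shift of $12$ by multiplication by $d$ on the torsion-free part, and then match the $3$-torsion on the explicit charts. You also correctly isolate the one genuinely delicate point --- that the isomorphism cannot respect the $\Lambda$/$\Lambda^\vee$ decomposition summand by summand, which in the paper appears as the cross-matching of the generators $\Sigma^9 b,\Sigma^9 ab$ with $a^\vee bd, bd$ --- so what remains is exactly the bookkeeping you describe.
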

\begin{proof}
We need to show that $ H^*(S_3,\Sigma^9\Lambda_{\sgn}\oplus\Lambda^\vee )$ is a shift by $ 12 $ of $ H^*(S_3,\Sigma^9\Lambda\oplus\Lambda^\vee_{\sgn}) $. First of all, we look at the non-torsion elements. Putting together the results from equation \eqref{inv:splitting} and Corollary \ref{prop:dualinvariants} yields
\[ H^0(S_3,\Sigma^9\Lambda_{\sgn}\oplus\Lambda^\vee)= d H^0(S_3,\Sigma^9\Lambda\oplus \Lambda_{\sgn}^\vee), \]
and indeed we shall find that the shift in higher cohomology also comes from multiplication by the element $ d $ (of topological grading $ 12 $).

Now we look at the higher cohomology groups, computed in Proposition \ref{prop:TateCohomology}. Identifying $ (b^{-1})^\vee $ with $ b $, we obtain, in positive cohomological grading
\begin{align*}
H^*(S_3,\pi_*\IZ Tmf(2))=\Sigma^9 H^*(S_3,\Lambda_{\sgn})\oplus H^*(\Lambda^\vee)\\
= \Z/3[b^2,\Delta]\langle \Sigma^9b,\Sigma^9ab,\Sigma^9b^2d,\Sigma^9ad \rangle\\
\oplus \Z/3[b^2,\Delta]/(\Delta^\infty)\langle b^2\Delta, a^\vee b^2\Delta, bd, a^\vee bd \rangle,
\end{align*}
which we are comparing to
\begin{align*}
\Sigma^{21} H^*(S_3,\pi_* Tmf(2))= \Sigma^9d H^*(S_3,\Lambda)\oplus d H^*(S_3,\Lambda^\vee_{\sgn})\\
= \Z/3[b^2,\Delta]\langle \Sigma^9b^2d,\Sigma^9ad,\Sigma^9b\Delta,\Sigma^9 ab\Delta \rangle\\
\oplus \Z/3[b^2,\Delta]/(\Delta^\infty)\langle bd\Delta,a^\vee bd\Delta, b^2\Delta, a^\vee b^2\Delta \rangle.
\end{align*}
Everything is straightforwardly identical, except for the match for the generators $ \Sigma^9b,\Sigma^9ab \in \Sigma^9 H^*(S_3,\Lambda_{\sgn}) $ which have cohomological gradings $ 2 $ and $ 3 $, and topological gradings $ 7 $ and $ 10 $ respectively. On the other side of the equation we have generators $  a^\vee bd, bd \in H^*(S_3,\Lambda^\vee)=\Sigma^9 H^*(S_3,H^1(\Ml{2},\om{}{*})) $, whose cohomological gradings are $ 2 $ and $ 3 $, and topological $ 7 $ and $ 10 $ respectively. Identifying these elements gives an isomorphism which is compatible with multiplication by $ a,b,d $.
\end{proof}

\subsection{Localization}

We record the behavior of our group cohomology rings when we invert a modular form; in Section \ref{sec:htpyfixedpoints} we will be inverting $ c_4 $ and $ \Delta $.

\begin{proposition}
Let $ M $ be one of the modules $ \Lambda $, $ \Lambda_{\sgn} $; the ring of  modular forms $ MF_*=\Lambda^{S_3} $ acts on $ M $. Let $ m\in MF_* $, and let $ M[m^{-1}] $ be the module obtained from $ M $ by inverting the action of $ m $. Then \[ H^*(S_3,M[m^{-1}])\cong H^*(S_3,M)[m^{-1}]. \]
\end{proposition}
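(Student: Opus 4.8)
The plan is to recognize this as the standard fact that the cohomology of a \emph{finite} group commutes with flat base change, once we observe that the multiplicative set we invert sits inside the invariant subring and hence acts equivariantly.

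First I would record the structural input. Since $ m\in MF_*=\Lambda^{S_3} $, multiplication by $ m $ is an $ S_3 $-equivariant endomorphism of $ M $: for $ M=\Lambda $ this is immediate, and for $ M=\Lambda_{\sgn} $ one checks that multiplication by an invariant element commutes with the sign-twisted action as well (if $ g\cdot_{\sgn}x=\sgn(g)\,gx $ then $ g\cdot_{\sgn}(mx)=\sgn(g)(gm)(gx)=m\,(g\cdot_{\sgn}x) $ because $ gm=m $). Consequently, writing $ S=\{1,m,m^2,\dots\} $, the localized module $ M[m^{-1}]=S^{-1}M $ carries a natural $ S_3 $-action making $ M\to M[m^{-1}] $ a map of $ S_3 $-modules, and $ (-)[m^{-1}] $ is an exact (indeed flat) endofunctor of the category of $ \Z[1/2]S_3 $-modules. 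This is what makes the statement sensible and is the only place the hypothesis $ m\in MF_* $ is used.

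Next I would compute both sides with a single fixed projective resolution. Because $ S_3 $ is finite, the trivial module $ \Z[1/2] $ admits a resolution $ P_\bullet\to\Z[1/2] $ by \emph{finitely generated} free $ \Z[1/2]S_3 $-modules (for instance the bar resolution, whose $ n $-th term is free of rank $ |S_3|^n $). For any $ S_3 $-module $ N $ one then has $ \Hom_{\Z[1/2]S_3}(P_n,N)\cong N^{r_n} $, where $ r_n=\mathrm{rank}\,P_n<\infty $, naturally in $ N $. Applying this with $ N=M[m^{-1}] $ and with $ N=M $, and using that localization commutes with finite direct sums, yields an isomorphism of cochain complexes
\[ \Hom_{\Z[1/2]S_3}(P_\bullet,M[m^{-1}])\cong \Hom_{\Z[1/2]S_3}(P_\bullet,M)[m^{-1}]. \]

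Finally, since $ (-)[m^{-1}] $ is exact it commutes with passage to the cohomology of a cochain complex, so taking $ H^* $ of both sides gives
\[ H^*(S_3,M[m^{-1}])\cong H^*(S_3,M)[m^{-1}], \]
and by construction this isomorphism respects the evident $ MF_*[m^{-1}] $-module structures. I do not expect a serious obstacle: the two points demanding care are the equivariance of the localization (supplied by $ m $ lying in the invariants) and the finite generation of each $ P_n $ (supplied by finiteness of $ S_3 $), the latter being precisely what lets localization pass through $ \Hom_{\Z[1/2]S_3}(P_n,-) $. The same argument applies verbatim to Tate cohomology, using a complete resolution in place of $ P_\bullet $, which is consistent with the Tate-cohomology computations of Proposition \ref{prop:TateCohomology}.
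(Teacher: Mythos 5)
Your argument is correct and is essentially the paper's: the proof in the text rests on the same two observations (that $m\in\Lambda^{S_3}$ makes multiplication by $m$, and hence the localization, $S_3$-equivariant, and that localization passes through $\Ext$ computed from a finitely generated projective resolution), but it outsources the second point to Weibel's Proposition 3.3.10 after rewriting $H^*(S_3,-)$ as $\Ext^*_{\Z[m][S_3]}(\Z[m],-)$. You have simply unpacked that citation explicitly via the bar resolution, which is a perfectly valid way to present the same argument.
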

\begin{proof}
Since the ring of modular form is $ S_3 $-invariant, the group $ S_3 $ acts $ MF_* $-linearly on $ M $; in fact, $ S_3 $ acts $ \Z[m^{\pm 1}] $-linearly on $ M $, where $ m $ is our chosen modular form. By Exercise 6.1.2 and Proposition 3.3.10 in \cite{\Weibel}, it follows that 
\begin{align*}
 H^*(S_3,M[m^{-1}])&=\Ext^*_{Z[m^{\pm 1}][S_3]}(\Z[m^{\pm 1}], M[m^{-1}])\\
 &=\Ext^*_{\Z[m][S_3]}(\Z[m],M)[m^{-1}]=H^*(S_3,M)[m^{-1}].
 \end{align*}
\end{proof}

Note that if $ M $ is one of the dual modules $ \Lambda^\vee $ or $ \Lambda_{\sgn}^\vee $, the elements of positive degree (i.e. non-scalar elements) in the ring of modular forms $ MF_* $ act on $ M $ nilpotently. Therefore $ M[m^{-1}]=0 $ for such an $ m $.  Moreover, for degree reasons, $c_4 a =0=c_4 b$, and we obtain the following result.
\begin{proposition}\label{prop:c4action}
The higher group cohomology of $ S_3 $ with coefficients in $ \pi_*Tmf(2)[c_4^{-1}] $ vanishes, and
\begin{align*} 
&H^*(S_3,\pi_*Tmf(2)[c_4^{-1}]) = H^0(S_3,\Lambda)[c_4^{-1}]= MF_*[c_4^{-1}].
\end{align*}
Inverting $ \Delta $ has the effect of annihilating the cohomology that comes from the negative homotopy groups of $ Tmf(2) $; in other words,
\begin{align*}
&H^*(S_3,\pi_*TMF(2))=H^*(S_3,\pi_*Tmf(2)[\Delta^{-1}])= H^*(S_3,\Lambda)[\Delta^{-1}].
\end{align*}
\end{proposition}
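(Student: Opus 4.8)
The plan is to exploit the decomposition $\pi_*Tmf(2)=\Lambda\oplus\Sigma^{-9}\Lambda^\vee_{\sgn}$ and to treat the two summands separately, using the preceding localization proposition to pull the localization at $c_4$ (respectively $\Delta$) out of the group cohomology. Since both $c_4$ and $\Delta$ are non-scalar elements of $MF_*=\Lambda^{S_3}$, they act nilpotently on the dual modules $\Lambda^\vee$ and $\Lambda^\vee_{\sgn}$, so inverting either one annihilates the summand $\Sigma^{-9}\Lambda^\vee_{\sgn}$ entirely. Thus in both cases only the free summand $\Lambda$ survives, and the computation reduces to understanding $H^*(S_3,\Lambda)[m^{-1}]$ for $m=c_4$ or $m=\Delta$.

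First I would dispose of the $c_4$ statement. After inverting $c_4$ we are left with $\pi_*Tmf(2)[c_4^{-1}]=\Lambda[c_4^{-1}]$, and the localization proposition identifies $H^*(S_3,\Lambda[c_4^{-1}])$ with $H^*(S_3,\Lambda)[c_4^{-1}]$. It then suffices to observe that $c_4$ annihilates all of the higher cohomology $H^{>0}(S_3,\Lambda)$: this agrees with Tate cohomology, and by Proposition \ref{prop:TateCohomology} every class of strictly positive cohomological degree carries a factor of $a$ or $b$ (the generator $d$ having cohomological degree zero), while for degree reasons $c_4a=0=c_4b$. Consequently inverting $c_4$ kills the entire torsion tower in positive degrees and leaves only $H^0(S_3,\Lambda)=MF_*$ from Proposition \ref{prop:invariants}, giving $H^*(S_3,\pi_*Tmf(2)[c_4^{-1}])=MF_*[c_4^{-1}]$ concentrated in cohomological degree zero.

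For the $\Delta$ statement the first task is the geometric identification $\pi_*TMF(2)=\Lambda[\Delta^{-1}]$: the spectrum $TMF(2)$ is the sections of $\Sh{O}(2)^{top}$ over the smooth locus $\Ml{2}^0$, which by Proposition \ref{M2stack} is exactly the complement of the three cusps, i.e. the vanishing locus of $\Delta$, so passing to it inverts $\Delta$ on homotopy. On the other hand $\pi_*Tmf(2)[\Delta^{-1}]=(\Lambda\oplus\Sigma^{-9}\Lambda^\vee_{\sgn})[\Delta^{-1}]=\Lambda[\Delta^{-1}]$, the dual summand dying as above; this is the precise sense in which inverting $\Delta$ annihilates the cohomology coming from the negative homotopy groups. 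Applying the localization proposition once more yields $H^*(S_3,\pi_*TMF(2))=H^*(S_3,\Lambda)[\Delta^{-1}]$.

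The main obstacle is the verification that $c_4$ acts as zero on $H^{>0}(S_3,\Lambda)$; everything else is formal. This rests on carefully tracking cohomological versus internal degrees in the ring $R=\Z/3[a,b^{\pm2},d^2]/(a^2)$ of Proposition \ref{prop:TateCohomology}, noting that every class of strictly positive cohomological degree carries a factor of $a$ or $b$, together with the elementary degree count $c_4a=0=c_4b$; the torsion-free invariants $H^0=MF_*$ are untouched and survive the localization. A secondary point requiring care is the clean identification of $\pi_*TMF(2)$ with $\Lambda[\Delta^{-1}]$, which uses the explicit description of $\Ml{2}$ and its cusps from Proposition \ref{M2stack}.
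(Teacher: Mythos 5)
Your proposal is correct and follows essentially the same route as the paper: the paper likewise disposes of the dual summands by noting that non-scalar modular forms act nilpotently on $\Lambda^\vee$ and $\Lambda^\vee_{\sgn}$, invokes the localization proposition for $\Lambda$, and kills the higher cohomology via the degree observation $c_4a=0=c_4b$. The only added content in your write-up is spelling out that every positive-cohomological-degree class in $H^{>0}(S_3,\Lambda)$ carries a factor of $a$ or $b$, which is a correct and worthwhile elaboration of what the paper leaves implicit.
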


\section{Homotopy Fixed Points}

In this section we will use the map $ q:\Ml{2}\to\M[1/2] $ and our knowledge about $ Tmf(2) $ from the previous sections to compute the homotopy groups of $ Tmf $, in a way that displays the self-duality we are looking for. Economizing the notation, we will write $ \M $ to mean $ \M[1/2] $ throughout.

\subsection{Homotopy Fixed Point Spectral Sequence}\label{sec:htpyfixedpoints}

We will use Theorem \ref{thm:tmffixedpt} to compute the homotopy groups of $ Tmf $ via the homotopy fixed point spectral sequence
\begin{align}\label{ss:htpyfixed}
H^*(S_3,\pi_*Tmf(2))\Rightarrow \pi_*Tmf.
\end{align}
We will employ two methods of calculating the $ E_2 $-term of this spectral sequence: the first one is more conducive to computing the differentials, and the second is more conducive to understanding the duality pairing.

\subsubsection*{Method One}
The moduli stack $ \M $ has an open cover by the substacks $ \M^0=\M[\Delta^{-1}] $ and $ \M[c_4^{-1}] $, giving the cube of pullbacks	
\[ \xymatrix@C=3pt@R=6pt{
& \M^0[c_4^{-1}] \ar[rr]\ar[dd]& &\M^0\ar[dd]\\
\Ml{2}^0[c_4^{-1}] \ar[ru]\ar[rr]\ar[dd] && \Ml{2}^0\ar[ru]\ar[dd]\\
&\M[c_4^{-1}]\ar[rr] &&\M.\\
\Ml{2}[c_4^{-1}]\ar[ru]\ar[rr] && \Ml{2}\ar[ru]
} \]
Since $ \Delta,c_4 $ are $ S_3 $-invariant elements of $ H^*(\Ml{2},\omega^*) $, the maps in this diagram are compatible with the $ S_3 $-action. 
Taking global sections of the front square, we obtain a cofiber sequence \[ Tmf(2) \to TMF(2)\vee Tmf(2)[c_4^{-1}] \to TMF(2)[c_4^{-1}],\] compatible with the $ S_3 $-action.
Consequently, there is a cofiber sequence of the associated homotopy fixed point spectral sequences, converging to the cofiber sequence from the rear pullback square of the above diagram
\begin{align}\label{cof:tmfc4del}
 Tmf \to TMF\vee Tmf[c_4^{-1}] \to TMF[c_4^{-1}].
 \end{align} 
We would like to deduce information about the differentials of the spectral sequence for $ Tmf $ from the others. According to Proposition \ref{prop:c4action}, we know that the spectral sequences for $ Tmf[c_4^{-1}] $ and $ TMF[c_4^{-1}] $ collapse at their $ E_2 $ pages and that all torsion elements come from $ TMF $. From Proposition \ref{prop:TateCohomology}, we know what they are. 

As the differentials in the spectral sequence \eqref{ss:htpyfixed} involve the torsion elements in the higher cohomology groups $ H^*(S_3,\pi_*Tmf(2)) $, they have to come from the spectral sequence for $ TMF $\footnote{Explicitly, the cofiber sequence \eqref{cof:tmfc4del} gives a commutative square of spectral sequences just as in the proof of Theorem \ref{MainThm} below, which allows for comparing differentials.}, where they are (by now) classical. They are determined by the following lemma, which we get from \cite{\Bauer} or \cite{\Rezk512}.
\begin{lemma}
The elements $ \alpha $ and $ \beta $ in $ \pi_*S_{(3)} $ are mapped to $ a $ and $ bd $ respectively under the Hurewicz map $ \pi_*S_{(3)}\to\pi_*TMF $.
\end{lemma}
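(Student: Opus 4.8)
The plan is to deduce this from the comparison of descent spectral sequences induced by the unit map of $E_\infty$-ring spectra $S_{(3)}\to TMF$, which identifies the $3$-primary Hurewicz image; the explicit computation underlying the identification is carried out in \cite{\Bauer} and \cite{\Rezk512}, and the sketch below indicates its structure.

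First I would locate $\alpha$ and $\beta$ as permanent cycles in the Adams--Novikov spectral sequence for $S_{(3)}$: the class $\alpha=\alpha_1$ generates $\Ext^{1,4}_{BP_*BP}(BP_*,BP_*)_{(3)}$ and detects the generator of $\pi_3 S_{(3)}=\Z/3$, while $\beta=\beta_1$ generates the relevant summand of $\Ext^{2,12}$ and detects the generator of $\pi_{10}S_{(3)}=\Z/3$. Since $S_{(3)}\to TMF$ is a map of $E_\infty$-rings, it induces a map from this spectral sequence to the descent spectral sequence converging to $\pi_* TMF$, which by the torsor descent over the smooth locus (in the spirit of Theorem \ref{thm:tmffixedpt}) is the homotopy fixed point spectral sequence $H^*(S_3,\pi_* TMF(2))\Rightarrow \pi_* TMF$. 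On $E_2$-pages this is the map on $\Ext$ induced by the classifying map of the formal group of the universal generalized elliptic curve.

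The heart of the argument is then to track $\alpha_1$ and $\beta_1$ across this map of $E_2$-pages and match them with the classes $a$ (bidegree $(1,2)$, contributing to $\pi_3$) and $bd$ (bidegree $(2,6)$, contributing to $\pi_{10}$) computed in Proposition \ref{prop:TateCohomology}. Here one uses the chromatic content: mod $3$ the reduction of $v_1$ is the Hasse invariant, and $\beta_1$ lands in the $v_2$-periodic family detected by $bd$; the associated connecting maps in the moduli-stack cohomology carry $\alpha_1$ and $\beta_1$ into the one-dimensional torsion groups spanned by $a$ and $bd$. The target degrees $\pi_3$ and $\pi_{10}$ already pin the images down to scalar multiples of $a$ and $bd$, so the only substantive assertion is that these scalars are nonzero.

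This nonvanishing --- equivalently, that $TMF$ detects $\alpha_1$ and $\beta_1$ --- is the main obstacle, since it cannot be read off from degree bookkeeping alone; it requires the explicit identification of the chromatic classes $v_1,v_2$ with the Hasse invariant and the discriminant inside the cohomology $H^*(S_3,\pi_* TMF(2))$. This verification is exactly the $3$-primary computation recorded in \cite{\Bauer} and \cite{\Rezk512}, which I would cite for the details; compatibly, the same elements arise in the resolution of the $K(2)$-local sphere studied in \cite{\GHMR,\Behrens}, providing an independent check.
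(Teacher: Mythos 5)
The paper gives no independent proof of this lemma either---it is quoted directly from \cite{\Bauer} and \cite{\Rezk512}---and your proposal correctly reduces the claim to the same nonvanishing statement verified in those references, so you are taking essentially the same route. The extra scaffolding you supply (the degree bookkeeping that pins the images down to scalar multiples of $a$ and $bd$ in $\pi_3$ and $\pi_{10}$, and the identification of the substantive content as the detection of $\alpha_1$ and $\beta_1$ by $TMF$) is accurate and consistent with the bidegrees computed in Proposition \ref{prop:TateCohomology}.
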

Hence, $ d_5(\Delta)=\alpha\beta^2 $, $ d_9(\alpha\Delta) =\beta^5$, and the rest of the pattern follows by multiplicativity.

The aggregate result is depicted in the chart of Figure \ref{fig:HtpyFixed}.

\begin{figure}[p]
\centering
\includegraphics[angle=90, height=.9\textheight]{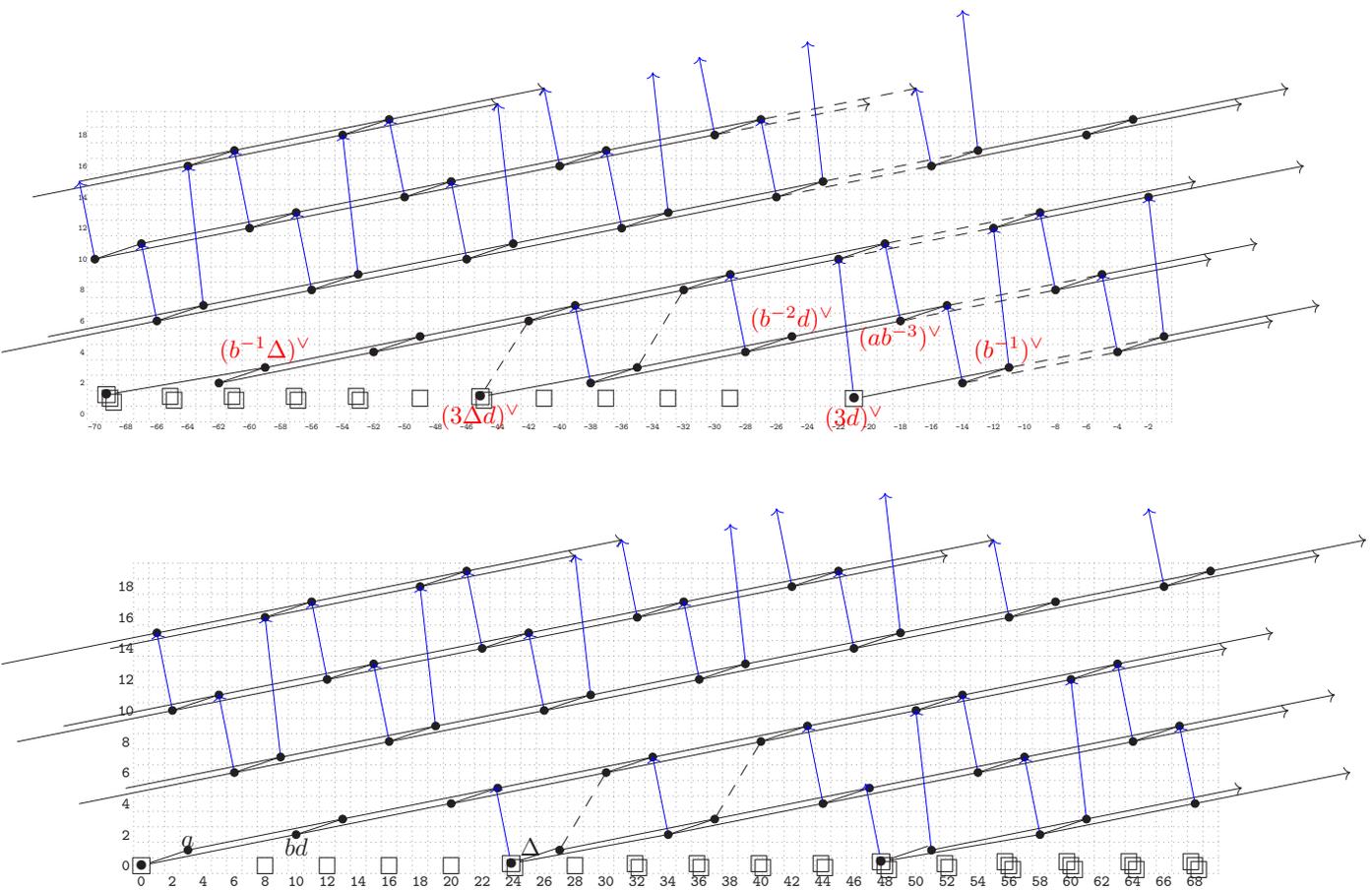}
\caption{Homotopy fixed point spectral sequence \eqref{ss:htpyfixed} for $ \pi_* Tmf $}\label{fig:HtpyFixed}
\end{figure}

\subsubsection*{Method Two}
On the other hand, we could proceed using our Serre duality for $ \Ml{2} $ and the spectral sequence \eqref{ss:dualcohomology}. The purpose is to describe the elements below the line $ t=0 $ as elements of $ H^*(S_3,\Lambda_{\sgn}^\vee) $. Indeed, according to the Serre duality pairing from Theorem \ref{GSDuality}, we have an isomorphism \[ H^s(S_3,H^1(\Ml{2},\om{}{t-4}))\cong H^s(S_3,\Lambda_{\sgn}^\vee), \]
and the latter is computed in Corollary \ref{prop:dualinvariants} via the collapsing spectral sequence \eqref{ss:dualcohomology} and Lemma \ref{lemma:coinvariants}
\[ \Ext^p_{\Z}(H_q(S_3,\Lambda_{\sgn}),\Z)\Rightarrow H^{p+q}(S_3,\Lambda_{\sgn}^\vee). \]
In Section \ref{sec:grpcohomology} we computed the input for this spectral sequence. The coinvariants are given as 
\[ H_0(S_3,\Lambda_{\sgn})=\Z/3[\Delta]ab^{-1}\oplus d(3,c_4,c_6) \]
by Lemma \ref{lemma:coinvariants}, and the remaining homology groups are computed via the Tate cohomology groups. Namely, for $ q\geq 1 $,
\[ H_q(S_3,\Lambda_{\sgn})\cong \tH^{-q-1}(S_3,\Lambda_{\sgn}) \] 
which, according to Proposition \ref{prop:TateCohomology}, equals to the part of cohomological degree $ (-q-1) $ in the Tate cohomology of $ \Lambda_{\sgn} $, which is $ Rb\oplus Rd $, where $ R=\Z/3 [a, b^{\pm 2}, \Delta]/ (a^2) $. Recall, the cohomological grading of $ a $ is one, that of $ b $ is two, and $ \Delta $ has cohomological grading zero.

In particular, we find that the invariants $ H^0(S_3,\Lambda_{\sgn}^\vee) $ are the module dual to the ideal $ d(3,c_4,c_6) $. This describes the negatively graded non-torsion elements. For example, the element dual to $ 3d $ is in bidegree $ (t,s)=(-10,1) $. We can similarly describe the torsion elements as duals. If $ X $ is a torsion abelian group, let $ X^\vee $ denote $ \Ext^1_{\Z}(X,\Z) $, and for $ x\in X $, let $ x^\vee $ denote the element in $ X^\vee $ corresponding to $ x $ under an isomorphism $ X\cong X^\vee $.  For example, $ (ab^{-1})^\vee $ is in bidegree $ (-6,2) $, and the element corresponding to $ b^{-2}d $ lies in bidegree $ (-10,5) $.

\subsection{Duality Pairing}

Consider the non-torsion part of the spectral sequence \eqref{ss:htpyfixed} for $\pi_*Tmf $. According to Lemma \ref{lemma:coinvariants}, on the $ E_2 $ page, it is $ MF_*\oplus \Sigma^{-9}d^\vee(3,c_4,c_6)^\vee $. Applying the differentials only changes the coefficients of various powers of $ \Delta $. Namely, the only differentials supported on the zero-line are $ d_5(\Delta^{m}) $ for non-negative integers $ m $ not divisible by $ 3 $, which hit a corresponding class of order $ 3 $. Therefore, $ 3^\epsilon\Delta^{m} $ is a permanent cycle, where $ \epsilon $ is zero if $ m $ is divisible by $ 3 $ and one otherwise. In the negatively graded part, only $  (3\Delta^{3k}d)^\vee $, for non-negative $ k $, support a differential $ d_9 $ and hit a class of order 3, thus $ (3^\epsilon\Delta^m d)^\vee $ are permanent cycles, for $ \epsilon $ as above. The pairing at $ E_\infty $ is thus obvious: $3^\epsilon\Delta^{m}c_4^ic_6^j$ and $ (3^\epsilon\Delta^m dc_4^ic_6^j)^\vee $ match up to the generator of $ \Z $ in $ \pi_{-21}Tmf $.

The pairing on torsion depends even more on the homotopy theory, and interestingly not only on the differentials, but also on the exotic multiplications by $ \alpha $. The non-negative graded part is $ \Z[\Delta^3] $ tensored with the pattern in Figure \ref{fig:TorPos}, whereas the negative graded part is $ (\Z[\Delta^3]/\Delta^\infty )\frac{d^\vee}{\lambda_1\lambda_2}$ tensored with the elements depicted in Figure \ref{fig:TorNeg}. 

\begin{figure}[h]
\centering
\includegraphics[width=\textwidth]{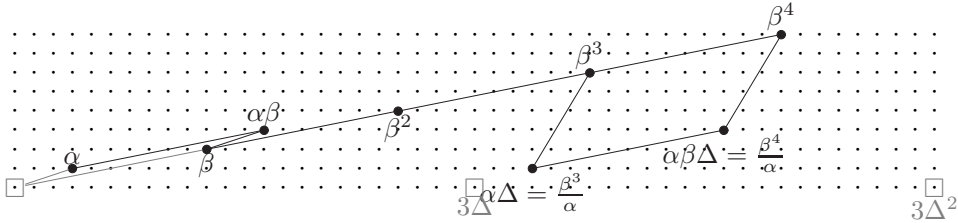}
\caption{Torsion in positive degrees}\label{fig:TorPos}
\end{figure}

Everything pairs to $ \alpha^\vee\beta^5(\Delta^2)^\vee $, which is $ d_9 (\frac{(3d)^\vee}{\lambda_1\lambda_2}) $, i.e. the image under $ d_9 $ of $ 1/3 $ of the dualizing class. Even though $ \alpha^\vee\beta^5(\Delta^2)^\vee $ is zero in the homotopy groups of $ Tmf $, the corresponding element in the homotopy groups of the $ K(2) $-local sphere is nontrivial \cite{\HKM}, thus it makes sense to talk about the pairing.

\begin{figure}[h]
\centering
\includegraphics[width=\textwidth]{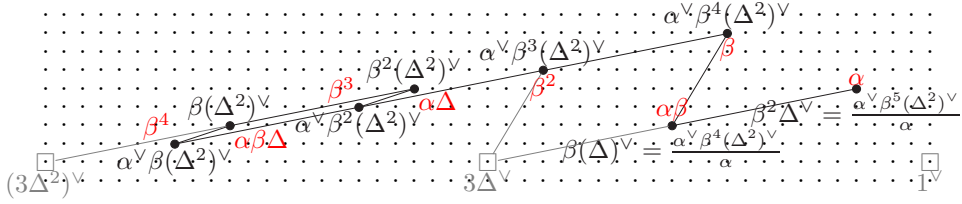}
\caption{Torsion in negative degrees}\label{fig:TorNeg}
\end{figure}

\section{The Tate Spectrum}

In this section we will relate the duality apparent in the homotopy groups of $ Tmf\simeq Tmf(2)^{hS_3}$ to the vanishing of the associated Tate spectrum. The objective is to establish the following:
\begin{thm}\label{thm:norm}
The norm map $ Tmf(2)_{hS_3}\to Tmf(2)^{hS_3} $ is an equivalence.
\end{thm}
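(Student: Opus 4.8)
The plan is to identify the cofiber of the norm map with a Tate spectrum and prove the latter contractible. The norm map sits in a cofiber sequence
\[ Tmf(2)_{hS_3} \to Tmf(2)^{hS_3} \to Tmf(2)^{tS_3}, \]
so Theorem \ref{thm:norm} is equivalent to the assertion $Tmf(2)^{tS_3}\simeq *$. Since $2$ is inverted, the quotient $C_2 = S_3/C_3$ has order prime to $3$, so the $C_2$-norm is an equivalence and $Tmf(2)^{tS_3}\simeq\bigl(Tmf(2)^{tC_3}\bigr)^{hC_2}$; at the level of spectral sequences this is the passage $\tH^*(S_3,-)\cong\tH^*(C_3,-)^{C_2}$ used in Section \ref{sec:tatecohomology}. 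It therefore suffices to show that the Tate spectral sequence
\[ \tH^*(S_3,\pi_*Tmf(2))\Rightarrow\pi_*Tmf(2)^{tS_3} \]
converges to zero. Its $E_2$-page is the Tate cohomology of $\pi_*Tmf(2)=\Lambda\oplus\Sigma^{-9}\Lambda_{\sgn}^\vee$, assembled from Proposition \ref{prop:TateCohomology} together with the Tate-duality identification $\tH^n(S_3,\Lambda_{\sgn}^\vee)\cong\tH^{-n-1}(S_3,\Lambda_{\sgn})^\vee$; in particular it is concentrated at the prime $3$ and the periodicity class $b$ acts invertibly.

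The differentials are not new. The natural map from the homotopy fixed point spectral sequence \eqref{ss:htpyfixed} to the Tate spectral sequence is multiplicative and an isomorphism on the relevant classes, so the differentials are governed by the same pattern $d_5(\Delta)\doteq\alpha\beta^2$ and $d_9(\alpha\Delta)\doteq\beta^5$ recorded in Method One. The crucial new input is that $b$ is a unit. Writing $\Delta\doteq d^2$, $\alpha = a$ and $\beta = bd$, and using that $\beta$ is a permanent cycle, the Leibniz rule forces $d_5(b)\doteq ab^3$, and hence
\[ d_5(b^{-2})\doteq a. \]
Thus $a$ is already a boundary on the $E_5 = E_2$ page, and the entire $a$-tower, being a unit multiple of $d_5$ of the $b$-periodic classes, is annihilated; multiplicativity then propagates this cancellation across the $d^2$- and $\beta$-towers. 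The classes surviving to $E_6$ form precisely the $\beta^5$-periodic families, and these are cleared by the $d_9$ differential.

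I would organize the remaining bookkeeping by the module structure of $\pi_*Tmf(2)$ over $\Lambda$, treating the summands $\Lambda$ and $\Sigma^{-9}\Lambda_{\sgn}^\vee$ simultaneously, so that the $d_9$'s emanating from the $\Lambda$-part land on the dual classes predicted by the Serre-duality pairing of Theorem \ref{GSDuality}. The main obstacle is exactly this final verification that nothing survives: one must confirm that after the $d_5$ and $d_9$ stages every class in the doubly-infinite (positively and negatively graded) $E_2$-page is either a boundary or a non-cycle, with no room for higher differentials. The self-duality of the picture---reflecting the Anderson self-duality of $Tmf(2)$ from Theorem \ref{thm:andersontmf2}---arranges the positive and negative halves to cancel in matched pairs, while the invertibility of $b$ guarantees that the differential pattern is genuinely periodic rather than truncated, forcing $E_\infty = 0$ and hence $Tmf(2)^{tS_3}\simeq *$.
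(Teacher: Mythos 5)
Your proposal is correct and follows essentially the same route as the paper: reduce to showing the Tate spectrum $Tmf(2)^{tS_3}$ is contractible, identify the $E_2$-page as $\Z/3[\alpha,\beta^{\pm1},\Delta^{\pm1}]/(\alpha^2)$ via Proposition \ref{prop:TateCohomology}, import the $d_5$ and $d_9$ from the homotopy fixed point spectral sequence by multiplicativity, and use the invertibility of $\beta$ and $\Delta$ to see that everything dies by $E_{10}$; your explicit observation that $d_5(\beta^{-2}\Delta)\doteq\alpha$, so that the $\alpha$-tower is already a boundary, is exactly what the paper leaves implicit in Figure \ref{fig:tate}. Two small points: for degree reasons the second differential must be $d_9(\alpha\Delta^2)=\beta^5$ rather than $d_9(\alpha\Delta)=\beta^5$ (you inherited a slip from the paper's ``Method One''; only the former makes the surviving $E_6$-classes $\alpha\beta^i\Delta^{3k+2}$ pair off against $\beta^i\Delta^{3k}$), and since the Tate spectral sequence is only conditionally convergent one should record, as the paper does, that vanishing at a finite page upgrades this to strong convergence.
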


A key role is played by the fact that $ S_3 $ has periodic cohomology.

Generalized Tate cohomology was first introduced by Adem-Cohen-Dwyer in \cite{MR1022669}. However, it was Greenlees and May who generalized and improved the theory, and, more importantly, developed excellent computational tools in \cite{\GreenleesMay}. In this section, we shall summarize the relevant results from \cite{\GreenleesMay} and apply them to the problem at hand.

Suppose $ k $ is a spectrum with an action by a finite group $ G $; in the terminology of equivariant homotopy theory, this is known as a \emph{naive} $ G $-spectrum. There is a norm map \cite[5.3]{\GreenleesMay}, \cite[II.7.1]{\LewisMaySteinberger} from the homotopy orbit spectrum $ k_{hG} $ to the homotopy fixed point spectrum $ k^{hG} $ whose cofiber we shall call the \emph{Tate spectrum} associated to the $ G $-spectrum $ k $, and for simplicity denote it by $ k^{tG} $
\begin{align}\label{seq:normfixed}
k_{hG}\to k^{hG}\to k^{tG}.
\end{align}
According to \cite[Proposition 3.5]{\GreenleesMay}, if $ k $ is a ring spectrum, then so are the associated homotopy fixed point and Tate spectra, and the map between them is a ring map.

We can compute the homotopy groups of each of the three spectra in \eqref{seq:normfixed} using the Atiyah-Hirzebruch-type spectral sequences $ \check{E_*}, E_*,\hat{E}_* $ \cite[Theorems 10.3, 10.5, 10.6]{\GreenleesMay}
\begin{align*}
\check{E}_2^{p,q}=H_{-p}(G,\pi_q k) &\Rightarrow \pi_{q-p} k_{hG}\\
E_2^{p,q}=H^p(G, \pi_q k) &\Rightarrow \pi_{q-p} k^{hG}\\
\hat{E}_2^{p,q}=\tH^p(G, \pi_q k) &\Rightarrow \pi_{q-p} k^{tG},
\end{align*}
which are conditionally convergent. As these spectral sequences can be constructed by filtering $ EG $, the first two are in fact the homotopy fixed point and homotopy orbit spectral sequences. In the case when $ k=Tmf(2) $, the first two are half-plane spectral sequences, whereas the third one is in fact a full plane spectral sequence. Moreover, the last two are spectral sequences of differential algebras.

The norm cofibration sequence \eqref{seq:normfixed} relates these three spectral sequences by giving rise to maps between them, which on the $ E_2 $-terms are precisely the standard long exact sequence in group cohomology:
\begin{align}\label{les:tate}
\cdots \to H_{-p}(G,M)\to H^p(G,M)\to \tH^p(G,M)\to H_{-p-1}(G,M)\to\cdots
\end{align}
The map of spectral sequences $ E_*\to \hat{E}_* $ is compatible with the differential algebra structure, which will be important in our calculations as it will allow us to determine the differentials in the Tate spectral sequence (and then further in the homotopy orbit spectral sequence).

\begin{figure}[p]
\centering
\includegraphics[height=.9\textheight]{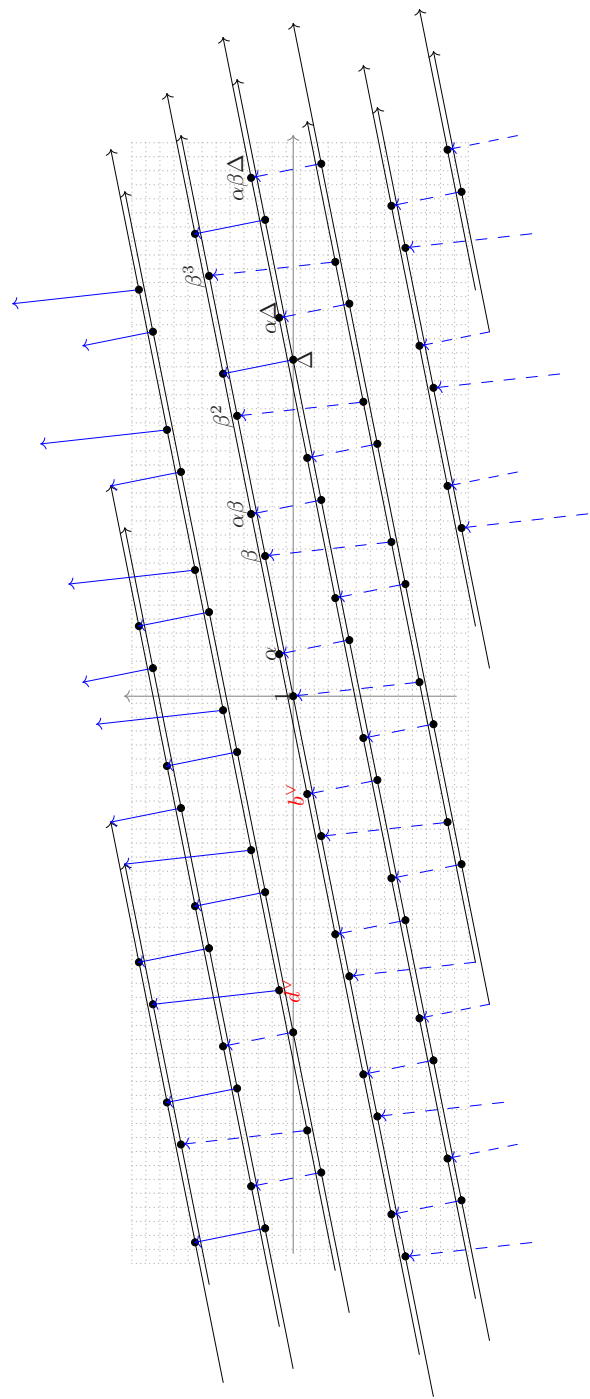}
\caption{Tate spectral sequence \eqref{ss:tate} for $ \pi_* Tmf(2)^{tS_3} $}\label{fig:tate}
\end{figure}

\begin{proof}[Proof of Theorem \ref{thm:norm} ]
We prove that the norm is an equivalence by showing that the associated Tate spectrum is contractible, using the above Tate spectral sequence for the case of $ k=Tmf(2) $ and $ G=S_3 $
\begin{align}\label{ss:tate}
\tH^p (S_3,\pi_{2t-q} Tmf(2)) = \tH^p(S_3,H^q(\Ml{2},\om{}{t}))\Rightarrow \pi_{2t-p-q} Tmf(2)^{tS_3}.
\end{align}
The $ E_2 $-page is the Tate cohomology
\[ \tH^*(S_3,\Lambda\oplus \Sigma^{-9}\Lambda_{\sgn}^\vee), \]
which we computed in Proposition \ref{prop:TateCohomology} to be \[\displaystyle{ R\oplus Rbd\oplus \frac{\eta d^\vee}{\lambda_1\lambda_2} R^\vee \oplus \frac{\eta b^\vee}{\lambda_1\lambda_2} R^\vee}, \]
where $ R=\Z/3[a,b^{\pm 2},\Delta]/(a^2) $, and $ R^\vee=\Ext^1_{\Z}(R,\Z) $.
Comparing the two methods for computing the $ E_2 $-page of the homotopy fixed point spectral sequence \eqref{ss:htpyfixed} identifies $ \frac{\eta d^\vee}{\lambda_1\lambda_2} $ with $ \frac{\alpha}{\Delta} $ and $ \frac{\eta b^\vee}{\lambda_1\lambda_2} $ with $ \frac{\alpha}{\beta} $. Further, we can identify $ b^\vee $ with $ b^{-1} $ and similarly for $ \Delta $, as it does not change the ring structure, and does not cause ambiguity. We obtain
\begin{align*}
&\frac{\eta d^\vee}{\lambda_1\lambda_2} R^\vee =\frac{\alpha}{\Delta} R^\vee =\frac{1}{\Delta}\Z/3[a,b^{\pm 2},\Delta^{-1}]/(a^2)\\
&\frac{\eta b^\vee}{\lambda_1\lambda_2}R^\vee = \frac{\alpha}{\beta}R^\vee = \frac{b}{d}\Z/3[a,b^{\pm 2},\Delta^{-1}]/(a^2)=\frac{\beta}{\Delta}\Z/3[a,b^{\pm 2},\Delta^{-1}]/(a^2).
\end{align*}
Summing all up, we get the $ E_2 $-page of the Tate spectral sequence
\begin{align}\label{E2tate}
\tH^*(S_3,\pi_*Tmf(2)) = \Z/3[\alpha,\beta^{\pm 1},\Delta^{\pm 1}]/(\alpha^2),
\end{align}
depicted in Figure \ref{fig:tate}.

 The compatibility with the homotopy fixed point spectral sequence implies that $ d_5(\Delta)=\alpha\beta^2 $ and $ d_9(\alpha\Delta^2)=\beta^5 $; by multiplicativity, we obtain a differential pattern as showed below in Figure \ref{fig:tate}. From the chart we see that the tenth page of the spectral sequence is zero, and, as this was a conditionally convergent spectral sequence, it follows that it is strongly convergent, thus the Tate spectrum $ Tmf(2)^{tS_3} $ is contractible.
\end{proof}

\subsection{Homotopy Orbits}

\begin{figure}[p]
\centering
\includegraphics[angle=90,height=.9\textheight]{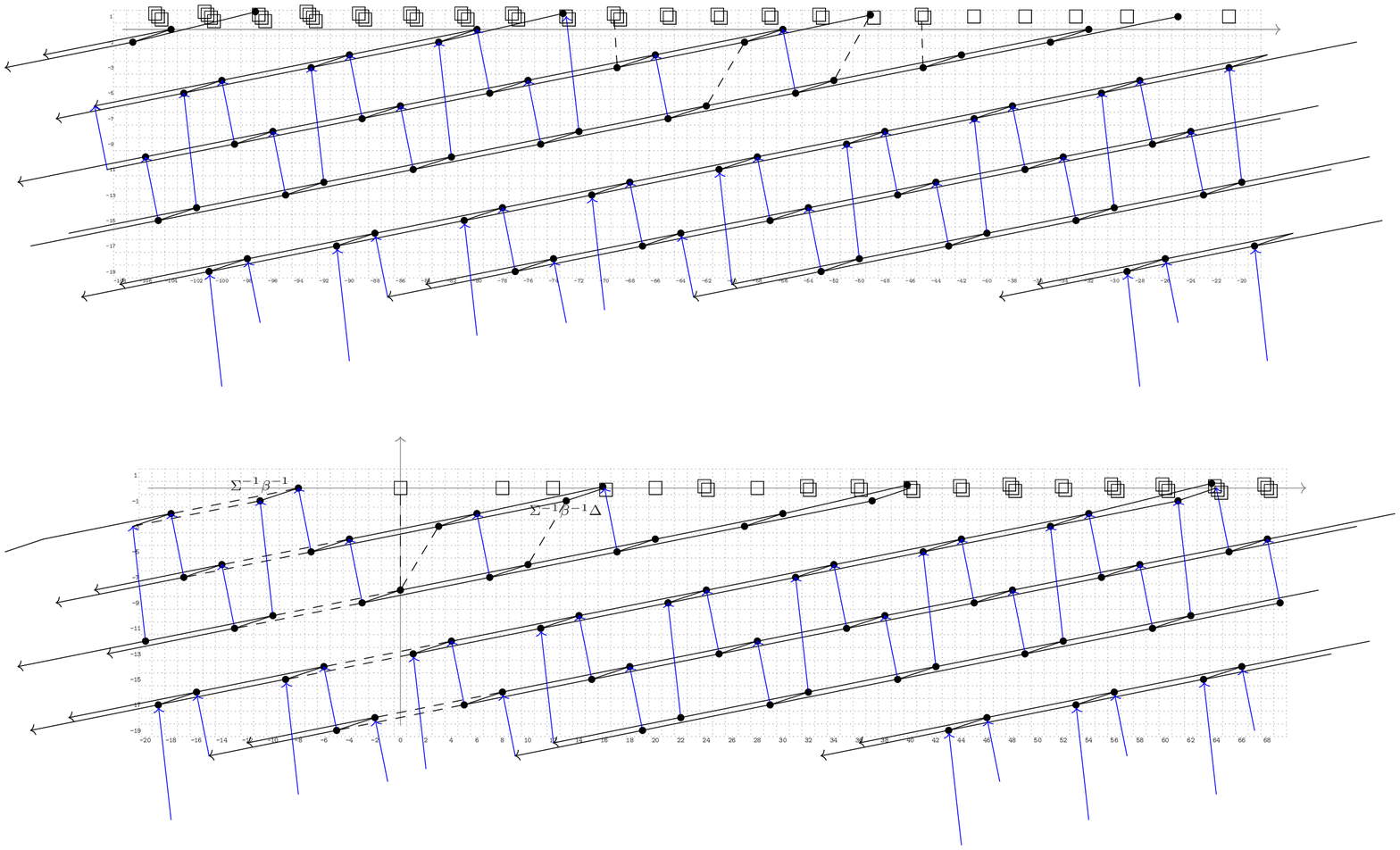}
\caption{Homotopy orbit spectral sequence \eqref{ss:htpyorbit} for $ \pi_* Tmf $}\label{fig:htpyorbit}
\end{figure}

As a corollary of the vanishing of the Tate spectrum $ Tmf(2)^{tS_3} $, we fully describe the homotopy orbit spectral sequence
\begin{align}\label{ss:htpyorbit}
H_{s}(S_3,\pi_t Tmf(2))\Rightarrow \pi_{t+s}Tmf(2)_{hS_3}=\pi_{t+s}Tmf.
\end{align}
From \eqref{E2tate}, we obtain the higher homology groups as well as the differentials. If $ S $ denotes the ring $\Z/3[\beta^{-1},\Delta^{\pm 1}] $,
\[ \bigoplus_{s>0}H_s(S_3,\pi_t Tmf(2))= \Sigma^{-1}( \beta^{-1} S\oplus \alpha\beta^{-2} S).\]
(The suspension shift is a consequence of the fact that the isomorphism comes from the coboundary map $ \tH^*\to H_{-*-1} $.)
The coinvariants are computed in Lemma \ref{lemma:coinvariants}. The spectral sequence is illustrated in Figure \ref{fig:htpyorbit}, with the the topological grading on the horizontal axis and for consistency, the cohomological on the vertical axis.

\section{Duality for $ Tmf $}

In this section we finally combine the above results to arrive at self-duality for $ Tmf $. The major ingredient in the proof is Theorem \ref{thm:norm}, which gives an isomorphism between the values of a right adjoint (homotopy fixed points) and a left adjoint (homotopy orbits). This situation often leads to a Grothendieck-Serre-type duality, which in reality is a statement that a functor (derived global sections) which naturally has a left adjoint (pullback) also has a right adjoint \cite{\FauskHuMay}.

Consider the following chain of equivalences involving the Anderson dual of $ Tmf $
\begin{align*}
\IZ Tmf &= F(Tmf,\IZ) \leftarrow F(Tmf(2)^{hS_3},\IZ) \to F(Tmf(2)_{hS_3},\IZ) \\
&\simeq F(Tmf(2),\IZ)^{hS_3}\simeq(\Sigma^9Tmf(2)_{\sgn})^{hS_3},
\end{align*}
which implies a homotopy fixed point spectral sequence converging to the homotopy groups of $ \IZ Tmf $. From our calculations in Section \ref{sec:grpcohomology} made precise in Proposition \ref{prop:e2shift}, the $ E_2 $-term of this spectral sequence is isomorphic to the $ E_2 $-term for the homotopy fixed point spectral sequence for $ Tmf(2)^{hS_3} $, shifted by $ 21 $ to the right. A shift of $ 9 $ comes from the suspension, and an additional shift of $ 12 $ comes from the twist by sign (which is realized by multiplication by the element $ d $ whose topological degree is $ 12 $). It is now plausible that $ \IZ Tmf $ might be equivalent to $ \Sigma^{21}Tmf $; it only remains to verify that the differential pattern is as desired. To do this, we use methods similar to the comparison of spectral sequences in \cite{MR604321} and in the algebraic setting,  \cite{\Deligne}: a commutative square of spectral sequences, some of which collapse, allowing for the tracking of differentials.

\begin{thm}\label{MainThm}
The Anderson dual of $ Tmf[1/2] $ is $ \Sigma^{21} Tmf[1/2]$.
\end{thm}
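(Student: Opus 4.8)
The plan is to use the vanishing of the Tate spectrum to turn Anderson duality into a statement about homotopy fixed points, and then to transport the algebraic comparison of Section \ref{sec:grpcohomology} to the level of spectra. First I would record the chain of equivalences displayed above. Starting from $\IZ Tmf=F(Tmf,\IZ)$ and the identification $Tmf[1/2]\simeq Tmf(2)^{hS_3}$ of Theorem \ref{thm:tmffixedpt}, Theorem \ref{thm:norm} permits replacing the homotopy fixed points $Tmf(2)^{hS_3}$ by the homotopy orbits $Tmf(2)_{hS_3}$. Because $F(-,\IZ)$ carries the colimit defining homotopy orbits to the limit defining homotopy fixed points, this yields $F(Tmf(2)_{hS_3},\IZ)\simeq F(Tmf(2),\IZ)^{hS_3}$, and Theorem \ref{thm:andersontmf2} rewrites the internal factor as $\Sigma^9 Tmf(2)_{\sgn}$. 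Altogether one obtains a genuine equivalence $\IZ Tmf\simeq(\Sigma^9 Tmf(2)_{\sgn})^{hS_3}$, together with its homotopy fixed point spectral sequence.

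Second, I would identify the input of this spectral sequence. Its $E_2$-page is $H^*(S_3,\pi_*\IZ Tmf(2))=H^*(S_3,\Sigma^9\Lambda_{\sgn}\oplus\Lambda^\vee)$, which Proposition \ref{prop:e2shift} identifies, as a module over $H^*(S_3,\pi_*Tmf(2))$, with the shift by $21$ of the $E_2$-page of the spectral sequence \eqref{ss:htpyfixed} computing $\pi_*Tmf$; nine of these degrees come from the suspension and the remaining twelve from the sign twist, realized by multiplication by the invariant $d$ of topological degree $12$. Thus the two spectral sequences already have abstractly isomorphic inputs.

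The main obstacle is the third step, matching the differentials. Here I would invoke naturality of Anderson duality applied to the cube of localizations of Method One. Inverting $c_4$ kills all torsion, so the localized spectral sequence collapses by Proposition \ref{prop:c4action}; inverting $\Delta$ lands in $TMF$, where the differentials $d_5(\Delta)=\alpha\beta^2$ and $d_9(\alpha\Delta)=\beta^5$ are classical. Dualizing the fracture square produces a commutative square of spectral sequences in which the two localized corners either collapse or carry known differentials. Since every torsion differential in \eqref{ss:htpyfixed} is detected after inverting $\Delta$ and then propagated multiplicatively over the differential algebra $H^*(S_3,\pi_*Tmf(2))$, the module isomorphism of Proposition \ref{prop:e2shift} forces the differentials in the spectral sequence for $\IZ Tmf$ to be exactly the $21$-fold shift of those for $Tmf$. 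Consequently the two spectral sequences become isomorphic from $E_2$ onward.

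Finally I would promote this to an equivalence of spectra. As in the proof of Theorem \ref{thm:andersontmf2}, $\IZ Tmf=F(Tmf,\IZ)$ is a $Tmf$-module, so a dualizing class $f\colon S^{21}\to\IZ Tmf$ determines a $Tmf$-module map $\tilde f\colon\Sigma^{21}Tmf\to\IZ Tmf$ through the action map. Using the explicit duality pairing on $\pi_*Tmf$ computed above, I would check that $\tilde f$ realizes the isomorphism of the previous steps, hence induces an isomorphism on homotopy groups and is an equivalence, giving $\IZ Tmf[1/2]\simeq\Sigma^{21}Tmf[1/2]$. I expect the genuine difficulty to lie in the differential matching of the third step, and in the fact that this equivalence is intrinsically a derived phenomenon: the twist is implemented by the non-invertible class $d$, so there is no equivariant equivalence $\Sigma^{21}Tmf(2)\simeq\Sigma^9 Tmf(2)_{\sgn}$ refining it, and the identification only emerges after passing to homotopy fixed points.
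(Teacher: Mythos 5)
Your overall architecture matches the paper's: the chain of equivalences $\IZ Tmf\simeq F(Tmf(2)_{hS_3},\IZ)\simeq(\Sigma^9Tmf(2)_{\sgn})^{hS_3}$ via Theorems \ref{thm:tmffixedpt}, \ref{thm:norm} and \ref{thm:andersontmf2}, the identification of $E_2$-pages via Proposition \ref{prop:e2shift}, and the final promotion to an equivalence by a dualizing class and the $Tmf$-module structure are all exactly the paper's steps. You have also correctly located the real difficulty in matching the differentials. But your proposed resolution of that step has a genuine gap.

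First, the claim that ``every torsion differential in \eqref{ss:htpyfixed} is detected after inverting $\Delta$'' is false for half of each spectral sequence: as recorded after the Localization proposition, inverting any positive-degree modular form annihilates $\Lambda^\vee$ and $\Lambda_{\sgn}^\vee$, so the entire negative-topological-degree region of both the $Tmf$ spectral sequence and the $\IZ Tmf$ spectral sequence maps to \emph{zero} in the $\Delta$- and $c_4$-localized spectral sequences. Those classes are only reached through the connecting map of the fracture cofiber sequence, and transporting differentials across a connecting map of spectral sequences is precisely the delicate point — it requires the Deligne/Miller ``commutative square of spectral sequences'' argument, not mere naturality. Second, and more fundamentally, an isomorphism of $E_2$-pages as modules over the differential algebra $H^*(S_3,\pi_*Tmf(2))$ does not force the differentials to agree: the differentials on the module generators — notably the degree-$21$ class $\Sigma^9 d$ (the putative dualizing class) and the dual generators such as $(3\Delta^{3k}d)^\vee$ — are not consequences of $d_5(\Delta)=\alpha\beta^2$, $d_9(\alpha\Delta)=\beta^5$ and multiplicativity; whether $\Sigma^9 d$ is a permanent cycle is essentially equivalent to what you are trying to prove. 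The paper closes this gap differently: it forms the cosimplicial spectra $A^\bullet_R=F(ES_{3+}\wedge_{S_3}Tmf(2),\IR)$ for $R=\Q,\Q/\Z$, and computes $\Tot$ of the resulting square in two orders. Because the Anderson duality spectral sequences and the dual-module group cohomology spectral sequence \eqref{ss:dualcohomology} both collapse, \cite[Proposition 1.3.2]{\Deligne} identifies the homotopy fixed point spectral sequence for $\IZ Tmf$ with the $\Z$-dual of the homotopy \emph{orbit} spectral sequence \eqref{ss:htpyorbit} — and the latter's differentials are completely known from the Tate vanishing (Figure \ref{fig:htpyorbit}). That use of the homotopy orbit spectral sequence as the source of the differentials is the essential ingredient missing from your argument.
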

\begin{rem}
Here again Anderson duals are taken in the category of spectra with $ 2 $ inverted, and $ 2 $ will implicitly be inverted everywhere  in order for the presentation to be more compact. In particular, $ \Z $ will denote $ \Z[1/2] $, and $ \Q/\Z $ will denote $ \Q/\Z[1/2] $.
\end{rem}
\begin{proof}
For brevity, let us introduce the following notation: for $ R $ any of $ \Z,\Q,\Q/\Z $, we let $ A^\bullet_{R} $ be the cosimplicial spectrum $ F(ES_{3+}\smsh{S_3} Tmf(2),\IR) $. In particular we have that \[ A^h_R=F\bigl( (S_3)_+^{\wedge (h+1)}\smsh{S_3}Tmf(2),\IR\bigr). \]
Then the totalization $ \Tot A^\bullet_{\Z}\simeq \IZ Tmf $ is equivalent to the fiber of the natural map $ \Tot A^\bullet_{\Q} \to \Tot A^\bullet_{\Q/\Z}  $. In other words, we are looking at the diagram
\[\xymatrix{
A^0_{\Q}\ar@2[r]\ar[d] & A^1_{\Q}\ar@3[r]\ar[d] &A^2_{\Q}\ar[d] \cdots\\
A^0_{\Q/\Z}\ar@2[r] & A^1_{\Q/\Z}\ar@3[r] &A^2_{\Q/Z}\cdots\\
}\]
and the fact that totalization commutes with taking fibers gives us two ways to compute the homotopy groups of $ \IZ Tmf $. Taking the fibers first gives rise to the homotopy fixed point spectral sequence whose differentials we are to determine: Each vertical diagram gives rise to an Anderson duality spectral sequence \eqref{ss:AndersonDual}, which collapses at $ E_2 $, as the homotopy groups of each $ \displaystyle{ \bigl((S_3)_+^{\wedge (h+1)}\smsh{S_3}Tmf(2)\bigr) }$ are free over $ \Z $. On the other hand, assembling the horizontal direction first gives a map of the $ \Q $ and $ \Q/\Z $-duals of the homotopy fixed point spectral sequence for the $ S_3$-action on $ Tmf(2) $; this is because $ \Q $ and $ \Q/\Z $ are injective $ \Z $-modules, thus dualizing is an exact functor.

Let $ R^\bullet $ denote the standard injective resolution of $ \Z $, namely $ R^0=\Q $ and $ R^1=\Q/\Z $ related by the obvious quotient map. Then, schematically, we have a diagram of $ E_1 $-pages
\[\xymatrix{
\Hom_{\Z}\big(\Z[S_3]^{\otimes(h+1)}\tensor{\Z[S_3]}\pi_t Tmf(2), R^v\big) \ar@2[d]_-*+[o][F-]{A} \ar@2[r]^-*+[o][F-]{B} &\Hom_{\Z}\big(\pi_{t+h}Tmf(2)_{hS_3},R^v\big)\ar@2[d]^-*+[o][F-]{D}\\
\Hom_{\Z}\big(\Z[S_3]^{\otimes(h+1)}\tensor{\Z[S_3]}\pi_t Tmf(2), \Z\big)\ar@2[r]^-*+[o][F-]{C} & \pi_{-t-h-v} \IZ Tmf(2)_{hS_3}.
}\]
The spectral sequence $ A $ collapses at $ E_2 $, and $ C $ is the homotopy fixed point spectral sequence that we are interested in: its $ E_2 $ page is the $ S_3 $-cohomology of the $ \Z $-duals of $ \pi_* Tmf(2) $, which are the homotopy groups of the Anderson dual of $ Tmf(2) $
\begin{align*}
&H^* \Hom_{\Z}\big(\Z[S_3]^{\otimes(h+1)}\tensor{\Z[S_3]}\pi_t Tmf(2), \Z\big)\\
&\cong H^* \Hom_{\Z[S_3]}\big(\Z[S_3]^{\otimes(h+1)},\Hom_{\Z}(\pi_t Tmf(2),\Z)\big)\\
&\cong H^h(S_3,\pi_{-t}\IZ Tmf(2)).
\end{align*}
Indeed, the $ E_2 $-pages assemble in the following diagram
\[\xymatrix{
\Ext^v_{\Z}\big(H_{h}(S_3,\pi_t Tmf(2)), \Z\big) \ar@2[d]_-*+[o][F-]{A} \ar@2[r]^-*+[o][F-]{B} &\Ext^v_{\Z}\big(\pi_{t+h}Tmf(2)_{hS_3},\Z\big)\ar@2[d]^-*+[o][F-]{D}\\
H^{h+v}(S_3,\pi_{-t}\IZ Tmf(2))  \ar@2[r]^-*+[o][F-]{C} & \pi_{-t-h-v} \IZ Tmf(2)_{hS_3}.
}\]
The spectral sequence $ A $ is the dual module group cohomology spectral sequence \eqref{ss:dualcohomology}, and it collapses, whereas $ D $ is the Anderson duality spectral sequence \eqref{ss:AndersonDual} which likewise collapses at $ E_2 $. The spectral sequence $ B $ is dual to the homotopy orbit spectral sequence $ \check E_* $ \eqref{ss:htpyorbit}, which we have completely described in Figure \ref{fig:htpyorbit}. Now \cite[Proposition 1.3.2]{\Deligne} tells us that the differentials in $ B $ are compatible with the filtration giving $ C $ if and only if $ A $ collapses, which holds in our case. Consequently, $ B $ and $ C $ are isomorphic.	

In conclusion, we read off the differentials from the homotopy orbit spectral sequence (Figure \ref{fig:htpyorbit}). There are only non-zero $ d_5 $ and $ d_9 $. For example, the generators in degrees $ (6,3) $ and $ (2,7) $ support a $ d_5 $ as the corresponding elements in \eqref{ss:htpyorbit} are hit by a differential $ d_5 $. Similarly, the generator in $ (1,72) $ supports a $ d_9 $, as it corresponds to an element hit by a $ d_9 $. There is no possibility for any other differentials, and the chart is isomorphic to a shift by $ 21 $ of the one in Figure \ref{fig:HtpyFixed}.

By now we have an abstract isomorphism of the homotopy groups of $ \IZ Tmf $ and $ \Sigma^{21}Tmf $, as $ \pi_*Tmf $-modules. As in Theorem \ref{thm:andersontmf2}, we build a map realizing this isomorphism by specifying the dualizing class and then extending using the $ Tmf $-module structure on $ \IZ Tmf $.
\end{proof}

As a corollary, we recover \cite[Proposition 2.4.1]{\Behrens}.
\begin{cor}
At odd primes, the Gross-Hopkins dual of $ L_{K(2)}Tmf $ is $ \Sigma^{22}L_{K(2)}Tmf $.
\end{cor}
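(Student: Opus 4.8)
The plan is to combine the main Theorem \ref{MainThm} with the comparison between Anderson duality and the $K(2)$-local Brown--Comenetz (Gross--Hopkins) dual $I_2$ set up in Section \ref{sec:dualities}. The essential input is \eqref{AndersonGrossHopkins}: for any $E(2)$-local spectrum $X$ one has $I_2 X \simeq L_{K(2)}\Sigma\IZ X$. This is exactly the mechanism that turns the $21$-fold shift of $\IZ Tmf$ into a $22$-fold shift; the extra suspension is the discrepancy between Anderson and Brown--Comenetz duality in the $K(2)$-local category, which arises from the vanishing of the rational term $\IQ$ in the fiber sequence \eqref{IZresolution} after $K(2)$-localization (so that $L_{K(2)}\IQZ X \simeq L_{K(2)}\Sigma\IZ X$).

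Concretely, I would argue as follows. Since $I_2$ is an endofunctor of the $K(2)$-local category, $I_2 L_{K(2)} Tmf$ is unchanged if we replace its argument by any $K(2)$-locally equivalent spectrum. The spectrum $L_{K(2)} Tmf$ is itself $E(2)$-local (being $K(2)$-local), so Proposition \ref{prop:local} with $Y=\IQZ$, equivalently \eqref{AndersonGrossHopkins}, applies directly and gives $I_2 L_{K(2)} Tmf \simeq L_{K(2)}\Sigma\IZ(L_{K(2)} Tmf)$. Feeding in Theorem \ref{MainThm}, namely $\IZ Tmf \simeq \Sigma^{21} Tmf$, I would then compute
\[ L_{K(2)}\Sigma\IZ(L_{K(2)} Tmf) \simeq L_{K(2)}\Sigma\IZ Tmf \simeq L_{K(2)}\Sigma\Sigma^{21} Tmf = \Sigma^{22} L_{K(2)} Tmf, \]
the last equality because $L_{K(2)}$ commutes with suspension. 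Because the main theorem is already an equivalence of spectra, applying the functor $L_{K(2)}\Sigma$ delivers the asserted equivalence on the nose, with no need to rebuild a dualizing class as in Theorems \ref{thm:andersontmf2} and \ref{MainThm}.

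The step needing care --- and the main obstacle --- is the first equivalence in the display, i.e. replacing $L_{K(2)} Tmf$ by $Tmf$ inside $L_{K(2)}\Sigma\IZ(-)$. Dualizing the $K(2)$-localization cofiber sequence whose fiber is $\mathrm{fib}(Tmf \to L_{K(2)} Tmf)$, this reduces to the claim that $L_{K(2)}\IZ$ annihilates that fiber, i.e. that the Anderson dual of the $K(2)$-acyclization of $Tmf$ is again $K(2)$-acyclic. This is a genuinely chromatic assertion rather than a formal manipulation, and it is precisely the sort of statement controlled by the hypothesis that $Tmf$ satisfies the relevant telescope conjectures (compare \cite[2.4.4--2.4.7]{\Behrens}): one models the relevant fiber by a finite-localization fiber and then argues, in the spirit of the generalized Moore spectrum computation in the proof of Proposition \ref{prop:local}, that $K(2)$ kills its dual. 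Granting this reduction, the displayed chain of equivalences proves the corollary and recovers \cite[Proposition 2.4.1]{\Behrens}.
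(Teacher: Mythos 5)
Your proposal lands on the right two ingredients --- Theorem \ref{MainThm} and the comparison \eqref{AndersonGrossHopkins} --- but it orders them so as to create an obstacle that the paper's one-line proof never meets. The paper applies \eqref{AndersonGrossHopkins} to $X=Tmf$ itself, which is $E(2)$-local, giving $I_2 Tmf \simeq \Sigma L_{K(2)}\IZ Tmf \simeq \Sigma L_{K(2)}\Sigma^{21}Tmf = \Sigma^{22}L_{K(2)}Tmf$; the identification of this with the Gross--Hopkins dual of $L_{K(2)}Tmf$ is then immediate from the observation (already made in Section \ref{sec:dualities}) that $I_2 X = F(M_2X,\IQZ)$ depends only on $L_{K(2)}X$, so $I_2 L_{K(2)}Tmf \simeq I_2 Tmf$. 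You state exactly this invariance in your second paragraph, but then apply \eqref{AndersonGrossHopkins} to $L_{K(2)}Tmf$ instead of to $Tmf$, which forces you to move $\IZ$ past the inner $L_{K(2)}$ and leaves you ``granting'' the claim that $L_{K(2)}\IZ$ kills the fiber of $Tmf\to L_{K(2)}Tmf$. That claim is in fact true and needs no telescope-conjecture input: since $Tmf$ is $E(2)$-local, the fracture square \eqref{fracturesquare} identifies that fiber with the fiber of $L_1 Tmf\to L_1L_{K(2)}Tmf$, which is $E(1)$-local, and the generalized Moore spectrum argument from the proof of Proposition \ref{prop:local} shows the Anderson dual of any $E(1)$-local spectrum is $K(2)$-acyclic. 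So your route can be completed, but the appeal to the telescope conjectures is a misattribution (that hypothesis enters the paper only in relating Mahowald--Rezk duality to $W_n$, not here), and the entire detour disappears if you simply invoke $I_2 L_{K(2)}Tmf\simeq I_2 Tmf$ first and then apply \eqref{AndersonGrossHopkins} to the $E(2)$-local spectrum $Tmf$, as the paper does.
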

\begin{proof}
The spectrum $ Tmf $ is $ E(2) $-local, hence we can compute the Gross-Hopkins dual $ I_2Tmf $ as $ \Sigma L_{K(2)}\IZ Tmf $ by \eqref{AndersonGrossHopkins}.
\end{proof}

\bibliographystyle{amsalpha}

\begin{thebibliography}{GHMR05}

\parskip0pt

\bibitem[ACD89]{MR1022669}
A.~Adem, R.~L. Cohen, and W.~G. Dwyer, \emph{Generalized {T}ate homology,
  homotopy fixed points and the transfer}, Algebraic topology ({E}vanston,
  {IL}, 1988), Contemp. Math., vol.~96, Amer. Math. Soc., Providence, RI, 1989,
  pp.~1--13.

\bibitem[And69]{A1}
D.W. Anderson, \emph{Universal coefficient theorems for k-theory,
  \emph{mimeographed notes}}, Univ. California, Berkeley, Calif. (1969).

\bibitem[Bau08]{MR2508200}
Tilman Bauer, \emph{Computation of the homotopy of the spectrum {\tt tmf}},
  Groups, homotopy and configuration spaces, Geom. Topol. Monogr., vol.~13,
  Geom. Topol. Publ., Coventry, 2008, pp.~11--40.

\bibitem[BC76]{MR0405403}
Edgar~H. Brown, Jr. and Michael Comenetz, \emph{Pontrjagin duality for
  generalized homology and cohomology theories}, Amer. J. Math. \textbf{98}
  (1976), no.~1, 1--27.

\bibitem[Beh06]{MR2193339}
Mark Behrens, \emph{A modular description of the {$K(2)$}-local sphere at the
  prime 3}, Topology \textbf{45} (2006), no.~2, 343--402.

\bibitem[Beh07]{ConstructTMF}
\bysame, \emph{Notes on the construction of $ tmf $}, To appear in proceedings
  of 2007 Talbot Workshop (2007).

\bibitem[Coo78]{MR0461544}
George Cooke, \emph{Replacing homotopy actions by topological actions}, Trans.
  Amer. Math. Soc. \textbf{237} (1978), 391--406.

\bibitem[Del71]{MR0498551}
Pierre Deligne, \emph{Th\'eorie de {H}odge. {II}}, Inst. Hautes \'Etudes Sci.
  Publ. Math. (1971), no.~40, 5--57.

\bibitem[DHI04]{MR2034012}
Daniel Dugger, Sharon Hollander, and Daniel~C. Isaksen, \emph{Hypercovers and
  simplicial presheaves}, Math. Proc. Cambridge Philos. Soc. \textbf{136}
  (2004), no.~1, 9--51.

\bibitem[DR73]{MR0337993}
P.~Deligne and M.~Rapoport, \emph{Les sch\'emas de modules de courbes
  elliptiques}, Modular functions of one variable, {II} ({P}roc. {I}nternat.
  {S}ummer {S}chool, {U}niv. {A}ntwerp, {A}ntwerp, 1972), Springer, Berlin,
  1973, pp.~143--316. Lecture Notes in Math., Vol. 349.

\bibitem[FHM03]{MR1988072}
H.~Fausk, P.~Hu, and J.~P. May, \emph{Isomorphisms between left and right
  adjoints}, Theory Appl. Categ. \textbf{11} (2003), No. 4, 107--131
  (electronic).

\bibitem[GH04a]{GH}
P.~G. Goerss and M.~J. Hopkins, \emph{Moduli problems for structured ring
  spectra}, 2004.

\bibitem[GH04b]{MR2125040}
\bysame, \emph{Moduli spaces of commutative ring spectra}, Structured ring
  spectra, London Math. Soc. Lecture Note Ser., vol. 315, Cambridge Univ.
  Press, Cambridge, 2004, pp.~151--200.

\bibitem[GHMR05]{MR2183282}
P.~Goerss, H.-W. Henn, M.~Mahowald, and C.~Rezk, \emph{A resolution of the
  {$K(2)$}-local sphere at the prime 3}, Ann. of Math. (2) \textbf{162} (2005),
  no.~2, 777--822.

\bibitem[GM95]{MR1230773}
J.~P.~C. Greenlees and J.~P. May, \emph{Generalized {T}ate cohomology}, Mem.
  Amer. Math. Soc. \textbf{113} (1995), no.~543, viii+178.

\bibitem[Har77]{MR0463157}
Robin Hartshorne, \emph{Algebraic geometry}, Springer-Verlag, New York, 1977,
  Graduate Texts in Mathematics, No. 52.

\bibitem[HKM08]{HKM}
Hans-Werner Henn, Nasko Karamanov, and Mark Mahowald, \emph{The homotopy of the
  k(2)-local moore spectrum at the prime 3 revisited}, 2008.

\bibitem[Hol08]{MR2391126}
Sharon Hollander, \emph{A homotopy theory for stacks}, Israel J. Math.
  \textbf{163} (2008), 93--124.

\bibitem[HS99]{MR1601906}
Mark Hovey and Neil~P. Strickland, \emph{Morava {$K$}-theories and
  localisation}, Mem. Amer. Math. Soc. \textbf{139} (1999), no.~666, viii+100.

\bibitem[HS05]{MR2192936}
M.~J. Hopkins and I.~M. Singer, \emph{Quadratic functions in geometry,
  topology, and {M}-theory}, J. Differential Geom. \textbf{70} (2005), no.~3,
  329--452.

\bibitem[Jar87]{MR906403}
J.~F. Jardine, \emph{Simplicial presheaves}, J. Pure Appl. Algebra \textbf{47}
  (1987), no.~1, 35--87.

\bibitem[Jar00]{MR1765868}
\bysame, \emph{Presheaves of symmetric spectra}, J. Pure Appl. Algebra
  \textbf{150} (2000), no.~2, 137--154.

\bibitem[Kat73]{MR0447119}
Nicholas~M. Katz, \emph{{$p$}-adic properties of modular schemes and modular
  forms}, Modular functions of one variable, {III} ({P}roc. {I}nternat.
  {S}ummer {S}chool, {U}niv. {A}ntwerp, {A}ntwerp, 1972), Springer, Berlin,
  1973, pp.~69--190. Lecture Notes in Mathematics, Vol. 350.

\bibitem[KM85]{MR772569}
Nicholas~M. Katz and Barry Mazur, \emph{Arithmetic moduli of elliptic curves},
  Annals of Mathematics Studies, vol. 108, Princeton University Press,
  Princeton, NJ, 1985.

\bibitem[LMB00]{MR1771927}
G{\'e}rard Laumon and Laurent Moret-Bailly, \emph{Champs alg\'ebriques},
  Ergebnisse der Mathematik und ihrer Grenzgebiete. 3. Folge. A Series of
  Modern Surveys in Mathematics [Results in Mathematics and Related Areas. 3rd
  Series. A Series of Modern Surveys in Mathematics], vol.~39, Springer-Verlag,
  Berlin, 2000.

\bibitem[LMSM86]{MR866482}
L.~G. Lewis, Jr., J.~P. May, M.~Steinberger, and J.~E. McClure,
  \emph{Equivariant stable homotopy theory}, Lecture Notes in Mathematics, vol.
  1213, Springer-Verlag, Berlin, 1986, With contributions by J. E. McClure.

\bibitem[Lur09]{MR2522659}
Jacob Lurie, \emph{Higher topos theory}, Annals of Mathematics Studies, vol.
  170, Princeton University Press, Princeton, NJ, 2009.

\bibitem[Lur11a]{DAG}
\bysame, \emph{Derived algebraic geometry}, 2011.

\bibitem[Lur11b]{HigherAlgebra}
\bysame, \emph{Higher algebra}, 2011.

\bibitem[Mil81]{MR604321}
Haynes~R. Miller, \emph{On relations between {A}dams spectral sequences, with
  an application to the stable homotopy of a {M}oore space}, J. Pure Appl.
  Algebra \textbf{20} (1981), no.~3, 287--312.

\bibitem[MR99]{MR1719751}
Mark Mahowald and Charles Rezk, \emph{Brown-{C}omenetz duality and the {A}dams
  spectral sequence}, Amer. J. Math. \textbf{121} (1999), no.~6, 1153--1177.

\bibitem[Och87]{MR895567}
Serge Ochanine, \emph{Sur les genres multiplicatifs d\'efinis par des
  int\'egrales elliptiques}, Topology \textbf{26} (1987), no.~2, 143--151.

\bibitem[Rav84]{MR737778}
Douglas~C. Ravenel, \emph{Localization with respect to certain periodic
  homology theories}, Amer. J. Math. \textbf{106} (1984), no.~2, 351--414.

\bibitem[Rez01]{R1}
Charles Rezk, \emph{Supplementary notes for math 512}, Notes of a course on
  topological modular forms given at Northwestern University in 2001, 2001.

\bibitem[Sil86]{MR817210}
Joseph~H. Silverman, \emph{The arithmetic of elliptic curves}, Graduate Texts
  in Mathematics, vol. 106, Springer-Verlag, New York, 1986.

\bibitem[TV05]{MR2137288}
Bertrand To{\"e}n and Gabriele Vezzosi, \emph{Homotopical algebraic geometry.
  {I}. {T}opos theory}, Adv. Math. \textbf{193} (2005), no.~2, 257--372.

\bibitem[TV08]{MR2394633}
\bysame, \emph{Homotopical algebraic geometry. {II}. {G}eometric stacks and
  applications}, Mem. Amer. Math. Soc. \textbf{193} (2008), no.~902, x+224.

\bibitem[Wei94]{MR1269324}
Charles~A. Weibel, \emph{An introduction to homological algebra}, Cambridge
  Studies in Advanced Mathematics, vol.~38, Cambridge University Press,
  Cambridge, 1994.

\end{thebibliography}
\providecommand{\bysame}{\leavevmode\hbox to3em{\hrulefill}\thinspace}
\providecommand{\MR}{\relax\ifhmode\unskip\space\fi MR }
\providecommand{\MRhref}[2]{%
  \href{http://www.ams.org/mathscinet-getitem?mr=#1}{#2}
}
\providecommand{\href}[2]{#2}


\end{document}